\theoremstyle{plain}
\newtheorem{thm}{Theorem}
\newtheorem{lemma}[thm]{Lemma}
\theoremstyle{definition}
\newtheorem{defn}[thm]{Definition}
\theoremstyle{remark}
\newtheorem{rem}[thm]{Remark}
\numberwithin{thm}{section}
\numberwithin{equation}{section}
\newcommand{\abs}[1]{\left\lvert#1\right\rvert}
\newcommand{\norm}[1]{\left\lVert#1\right\rVert}
\newcommand{\ang}[1]{\left\langle #1 \right\rangle}
\newcommand{\floor}[1]{\left\lfloor #1 \right\rfloor}
\newcommand{\ceil}[1]{\left\lceil #1 \right\rceil}
\newcommand{\paren}[1]{\left( #1 \right)}
\newcommand{\sqb}[1]{\left[ #1 \right]}
\newcommand{\set}[1]{\left\{ #1 \right\}}
\newcommand{\ol}{\overline}
\newcommand{\cH}{\mathcal H}
\newcommand{\cI}{\mathcal I}
\newcommand{\cL}{\mathcal L}
\newcommand{\cP}{\mathcal P}
\newcommand{\cQ}{\mathcal Q}
\newcommand{\cS}{\mathcal S}
\newcommand{\fB}{\mathfrak B}
\newcommand{\fU}{\mathfrak U}
\newcommand{\bx}{\bm x}
\newcommand{\FF}{\mathbb{F}}
\newcommand{\C}{\mathbb C}
\newcommand{\E}{\mathbb E}
\newcommand{\F}{\mathbb F}
\newcommand{\R}{\mathbb R}
\newcommand{\Z}{\mathbb Z}
\newcommand{\Zp}{\mathbb Z_{>0}}
\newcommand{\Znn}{\mathbb Z_{\geqslant 0}}
\newcommand{\U}{\mathbb U}
\newcommand{\Fpx}{\mathbb F_p^{\times}}
\newcommand{\Fpz}{(\mathbb F_p\setminus\{0\})}
\DeclareMathOperator*{\spn}{span}
\DeclareMathOperator{\rank}{rank}
\DeclareMathOperator{\Id}{Id}
\DeclareMathOperator{\fnz}{fnz}
\title{Testing linear-invariant properties}
\author[{Jonathan Tidor \and Yufei Zhao}]{Jonathan Tidor \and Yufei Zhao}
\address{Massachusetts Institute of Technology, Cambridge, MA 02139, USA}
\email{\{jtidor,yufeiz\}@mit.edu}
\thanks{Tidor was supported by NSF Graduate Research Fellowship Program DGE-1122374. Zhao was supported by NSF Award DMS-1764176, the MIT Solomon Buchsbaum Fund, and a Sloan Research Fellowship.}
\date{}
\begin{document}

\begin{abstract}
Fix a prime $p$ and a positive integer $R$. We study the property testing of functions $\mathbb F_p^n\to[R]$. We say that a property is testable if there exists an oblivious tester for this property with one-sided error and constant query complexity. Furthermore, a property is proximity oblivious-testable (PO-testable) if the test is also independent of the proximity parameter $\epsilon$. It is known that a number of natural properties such as linearity and being a low degree polynomial are PO-testable. 
These properties are examples of linear-invariant properties, meaning that they are preserved under linear automorphisms of the domain. Following work of Kaufman and Sudan, the study of linear-invariant properties has been an important problem in arithmetic property testing.

A central conjecture in this field, proposed by Bhattacharyya, Grigorescu, and Shapira, is that a linear-invariant property is testable if and only if it is semi subspace-hereditary. We prove two results, the first resolves this conjecture and the second classifies PO-testable properties.
\begin{enumerate}
    \item A linear-invariant property is testable if and only if it is semi subspace-hereditary.
    \item A linear-invariant property is PO-testable if and only if it is locally characterized.
\end{enumerate}

Our innovations are two-fold. We give a more powerful version of the compactness argument first introduced by Alon and Shapira. This relies on a new strong arithmetic regularity lemma in which one mixes different levels of Gowers uniformity. This allows us to extend the work of Bhattacharyya, Fischer, Hatami, Hatami, and Lovett by removing the bounded complexity restriction in their work. Our second innovation is a novel recoloring technique called patching. This Ramsey-theoretic technique is critical for working in the linear-invariant setting and allows us to remove the translation-invariant restriction present in previous work.
\end{abstract}

\maketitle

\section{Introduction}
\label{sec-intro}

In property testing, the aim is to find randomized algorithms that distinguish objects that have some given property from those that are far from satisfying the property by querying the given large object at a small number of locations. Property testing emerged from the linearity test of Blum, Luby, and Rubinfeld~\cite{BLR93}, and was formally defined and systematically studied by Rubinfeld and Sudan~\cite{RS07} and Goldreich, Goldwasser, and Ron \cite{GGR98}. There have been important developments especially in the following two settings: graph property testing and arithmetic property testing.

Two representative problems are: (1) given a large graph, test whether the graph is triangle-free or $\epsilon$-far from triangle-free (an $n$-vertex graph is \emph{$\epsilon$-far} from a graph property if one needs to add and/or remove more than $\epsilon n^2$ edges in order to satisfy the property), and (2) given a function $f \colon \FF_p^n \to \FF_p$, test whether $f$ is linear or $\epsilon$-far from linear (for an arithmetic property, being \emph{$\epsilon$-far} means that one needs to change the value of the function on more than an $\epsilon$-fraction of the domain in order to satisfy the property). In both cases, it is known that one can achieve the desired goal by sampling a fixed number of entries repeatedly $C(\epsilon)$ times. For testing whether a graph is triangle-free \cite{RS78}, one samples a uniformly random triple of vertices and checks whether they form a triangle, and for testing linearity~\cite{BLR93}, one samples $x,y \in \FF_p^n$ uniformly and checks if $f(x)+f(y) = f(x+y)$.

In this paper we give a property testing algorithm for a very general class of arithmetic properties. The goal is to determine whether a function $f\colon \FF_p^n \to [R]:=\{1,\ldots,R\}$ (with fixed prime $p$ and positive integer $R$) satisfies some given property or is $\epsilon$-far from satisfying the property.
All the properties we consider are \emph{linear-invariant} in the sense that they are invariant under automorphisms of the vector space $\FF_p^n$. Linear-invariant properties form an important general class of arithmetic properties, e.g., the work of Kaufman and Sudan~\cite{KS08} ``highlights linear-invariance as a central theme in algebraic property testing.''

We say that a property $\cP$ is \emph{testable} if there exists an oblivious tester with one-sided error (and constant query complexity) for the property. A tester for $\cP$ produces a positive integer $d=d(\epsilon)$ and an oracle provides the tester with the restriction $f|_U$ where $U$ is a uniform random $d$-dimensional linear subspace of the domain (if the domain is large enough that such a subspace exists; if the domain has dimension strictly less than $d$, the oracle provides the tester with all of $f$). We require our tester accepts functions $f$ satisfying $\cP$ with probability 1 and reject functions that are $\epsilon$-far from satisfying $\cP$ with probability at least $\delta=\delta(\epsilon)$ for some function $\delta\colon(0,1)\to(0,1)$. Furthermore, we say that $\cP$ is \emph{proximity oblivious-testable} (PO-testable) if $d=d(\epsilon)$ is a constant independent of $\epsilon$. The idea of PO-testability was introduced by Goldreich and Ron \cite{GR11} who, among other results, classified the PO-testable graph properties.

One surprising feature of property testing is that many natural properties, such as linearity, are testable and even PO-testable. A key feature of linearity is that it is subspace-hereditary meaning that if $f\colon\F_p^n\to\F_p$ is linear, then the same is true for $f|_U$ for every linear subspace $U\leq\F_p^n$. To be precise, we say that a linear-invariant property $\cP$ is \emph{subspace-hereditary} if for every $f\colon\F_p^n\to[R]$ satisfying $\cP$ and every linear subspace $U\leq\F_p^n$, the restriction $f|_U$ also satisfies $\cP$.

A central conjecture in this field, by Bhattacharyya, Grigorescu, and Shapira, is that all linear-invariant, subspace-hereditary properties are testable \cite[Conjecture 4]{BGS15}. In fact, they conjecture that the slightly larger class of \emph{semi subspace-hereditary} properties are testable and prove that no other properties can be tested.

\begin{defn}
A linear-invariant property $\cP$ is \emph{semi subspace-hereditary} if there exists a subspace-hereditary property $\cQ$ such that
\begin{enumerate}[(i)]
\item every function satisfying $\cP$ also satisfies $\cQ$;
\item for all $\epsilon>0$, there exists $N(\epsilon)$ such that if $f\colon\F_p^n\to[R]$ satisfies $\cQ$ and is $\epsilon$-far from satisfying $\cP$, then $n< N(\epsilon)$.
\end{enumerate}
\end{defn}

It is known that there are subspace-hereditary properties where the dimension $d$ sampled must grow as the proximity parameter $\epsilon$ approaches 0. To be PO-testable, a property must satisfy the following more restrictive condition.

\begin{defn}
A linear-invariant property $\cP$ is \emph{locally characterized} if there exists some $d$ such that the following holds. For every $f\colon\F_p^n\to[R]$ with $n\geq d$, the function $f$ satisfies $\cP$ if and only if $f|_U$ satisfies $\cP$ for every $U\leq\F_p^n$ of dimension $d$.
\end{defn}

Our first result is a resolution of the conjecture of Bhattacharyya, Grigorescu, and Shapira, classifying the testable linear-invariant properties. Our second result is a classification of the PO-testable linear-invariant properties. 

\begin{thm}
\label{main-thm}
A linear-invariant property is testable if and only if it is semi subspace-hereditary.
\end{thm}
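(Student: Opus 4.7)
The ``only if'' direction of this equivalence is established by Bhattacharyya, Grigorescu, and Shapira, so the task is to produce a tester for an arbitrary linear-invariant semi subspace-hereditary property $\cP$. Let $\cQ$ and $N(\cdot)$ witness semi subspace-heredity. My tester, on input $\epsilon$, queries all of $f$ if $n<N(\epsilon/2)$ and otherwise samples a uniform random subspace $U\le\F_p^n$ of a suitable dimension $d(\epsilon)$, accepting iff $f|_U\in\cQ$. Completeness is immediate: if $f\in\cP$ then $f\in\cQ$ by (i), hence $f|_U\in\cQ$ by subspace-heredity. For soundness, suppose $f$ is $\epsilon$-far from $\cP$ and $n\ge N(\epsilon/2)$. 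If $f$ were $(\epsilon/2)$-close to some $g\in\cQ$, then the contrapositive of (ii) applied to $g$ would give that $g$ is $(\epsilon/2)$-close to $\cP$, whence $f$ is $\epsilon$-close to $\cP$, a contradiction. Thus $f$ is $(\epsilon/2)$-far from $\cQ$, and soundness reduces to the following \emph{removal lemma}: for every linear-invariant subspace-hereditary $\cQ$ and every $\eta>0$ there exist $d=d(\eta)$ and $\delta=\delta(\eta)>0$ such that every $f\colon\F_p^n\to[R]$ that is $\eta$-far from $\cQ$ has $\PP_U[f|_U\notin\cQ]\ge\delta$ for a uniform $d$-dimensional subspace~$U$.

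I would prove this removal lemma by a compactness argument in the spirit of Alon--Shapira, adapted to the arithmetic setting. Assume toward contradiction a sequence $f_k\colon\F_p^{n_k}\to[R]$ with $n_k\to\infty$, each $\eta$-far from $\cQ$, for which $\PP_U[f_k|_U\notin\cQ]\to 0$ along any fixed $d$. Apply the strong arithmetic regularity lemma advertised in the abstract to decompose each $f_k$ as $g_k+h_k$, where $g_k$ is measurable with respect to a factor generated by nonclassical polynomials of bounded degree and $h_k$ is highly Gowers-uniform relative to this factor. The key novelty is that different polynomial degrees in the factor must meet differently strong uniformity estimates; this mixed-norm regularity lemma is what lets me dispense with the bounded-complexity hypothesis present in Bhattacharyya--Fischer--Hatami--Hatami--Lovett. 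A counting lemma then shows that the law of $f_k|_U$ is, up to $o(1)$ error, determined by the factor underlying $g_k$; combined with the standing assumption, this forces the factor to ``almost satisfy'' $\cQ$ in the sense that $\cQ$ holds on almost every large finite realization sampled from the factor.

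The crux is now to produce a function on $\F_p^{n_k}$ that lies in $\cQ$ and is close to $f_k$, contradicting $\eta$-farness. In the translation-invariant setting one symmetrizes by averaging over shifts, but linear invariance does not provide this symmetry; I expect the main difficulty of the proof to lie here. My plan is to invoke the patching technique alluded to in the abstract: a Ramsey-theoretic recoloring which, cell by cell in the factor, replaces the values of $g_k$ by values simultaneously consistent with $\cQ$ on every sufficiently large subspace intersecting the cell, while changing only an $o(1)$-fraction of the domain. The nontrivial point is to establish the combinatorial hypothesis that enables the Ramsey step in the linear-invariant (rather than translation-invariant) setting, where the orbits of the acting group are much richer. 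Once the patched function $\widetilde g_k$ is constructed and verified to lie in $\cQ$, a second application of the counting lemma turns its pointwise closeness to $f_k$ into the desired $o(1)$-distance to $\cQ$, contradicting the choice of $f_k$ and completing the proof of \Cref{main-thm}.
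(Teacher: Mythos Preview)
Your high-level plan matches the paper's: reduce to a removal lemma for $\cQ$, prove it via a strong arithmetic regularity lemma whose Gowers degree is allowed to increase at each iteration, and use patching for the linear-invariant obstacle. The soundness reduction is correct. However, your description of patching as a ``cell by cell'' recoloring is not how the argument works, and as stated it is internally inconsistent (a cell-by-cell wholesale replacement could not change only an $o(1)$-fraction of the domain). The paper partitions the atoms of the polynomial factor $\fB$ into \emph{regular} atoms (nonzero linear part) and \emph{irregular} atoms (those lying in the subspace $\tilde V$ where all the linear polynomials of $\fB$ vanish). Regular atoms are handled by the standard clean-up: overwrite each low-density color by a fixed high-density one, changing at most $\epsilon/2$ of the domain. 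Patching is applied \emph{only} to $\tilde V$, where the coloring is replaced entirely by a canonical coloring $\Xi_{\xi,\iota,\tilde\fB}$; since $|\tilde V|/|V|=p^{-I_{1,0}}\le\epsilon/2$, this also changes at most $\epsilon/2$. The Ramsey step (\cref{dichotomy} and its supersaturated form \cref{patching}) is needed precisely because regularity cannot control the neighborhood of the origin---not because every cell requires a Ramsey argument.

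You also omit the projectivization reduction (\cref{removal-lemma} to \cref{removal-lemma-proj}). Patterns built from $\bm L^\ell$ contain the zero form and pairwise linearly dependent forms, hence have infinite complexity in the sense of \cref{complexity}, and the counting lemma does not apply to them directly. The paper passes to the color set $\ol\cS=\cS^{\Fpx}$ and to the systems $\ol{\bm L}^\ell$ with one form per projective direction, restoring finite complexity; this step is not cosmetic. Finally, the paper's compactness is not sequential but finitary: for each parameter list $I$ one enumerates the finitely many summary functions $(F,\xi)$ and, for each that partially induces some $H\in\cH$, records one witness into a finite set $\cH_I$. This is what pins down $\cH_\epsilon$ and $\delta$ constructively; a limiting argument along a bad sequence $f_k$ would still have to locate this finitary core.
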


\begin{thm}
\label{main-thm-PO}
A linear-invariant property is PO-testable if and only if it is locally characterized.
\end{thm}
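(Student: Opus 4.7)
The plan is to establish the two directions of the equivalence separately. The forward implication, that PO-testability implies local characterization, is short. Suppose $\cP$ admits a PO-tester that samples a uniform $d$-dimensional subspace $U\leq\F_p^n$ and produces an accept/reject decision, and set
\[
\mathcal G \;=\; \setcond{g\colon \F_p^d\to[R]}{\text{the tester accepts $g$ with probability $1$}}.
\]
For $f\in\cP$, one-sided error forces the overall acceptance probability to equal $1$; since every $d$-dimensional $U$ is sampled with positive probability, this means $f|_U\in\mathcal G$ for every such $U$. Conversely, if $f|_U\in\mathcal G$ for every $d$-dimensional $U\leq\F_p^n$, the tester accepts $f$ with probability $1$, so by PO-testability $f$ cannot be $\epsilon$-far from $\cP$ for any $\epsilon>0$, and the finiteness of the domain then forces $f\in\cP$. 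Specializing to $f\colon\F_p^d\to[R]$ (where $U=\F_p^d$ is the only choice) gives $\mathcal G=\setcond{g\colon\F_p^d\to[R]}{g\in\cP}$, so the equivalence reads $f\in\cP\iff f|_U\in\cP$ for every $d$-dimensional $U$, which is precisely local characterization with parameter $d$.

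For the reverse implication, let $\cP$ be locally characterized with parameter $d$ and consider the natural tester: sample a uniform $d$-dimensional $U$ and reject iff $f|_U\notin\cP$. The forward direction of local characterization gives one-sided error immediately. What remains is a removal-type statement: for every $\epsilon>0$ there exists $\delta(\epsilon)>0$ such that if $\Pr_U[f|_U\notin\cP]<\delta(\epsilon)$, then $f$ is $\epsilon$-close to $\cP$. My plan is to view $\mathcal F=\setcond{g\colon\F_p^d\to[R]}{g\notin\cP}$ as a finite set of forbidden $d$-dimensional subspace patterns and apply the paper's strong arithmetic regularity lemma to decompose $f$ into a structured part governed by a polynomial factor together with a pseudorandom error. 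A generalized-von-Neumann-type counting lemma then confines the pseudorandom error's contribution to the density of $d$-dim restrictions landing in $\mathcal F$, so a small rejection probability forces nearly every cell of the structured decomposition to be almost free of forbidden restrictions. Patching will then modify $f$ on at most an $\epsilon$-fraction of $\F_p^n$ to eliminate the remaining forbidden restrictions, producing a function in $\cP$ within distance $\epsilon$.

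The hard part will be executing the removal step in the linear-invariant rather than translation-invariant setting: each local modification must simultaneously destroy many affine copies of every forbidden pattern without creating new ones, and the patching must be compatible with the polynomial-factor structure produced by the regularity lemma. Holding the sampled dimension fixed at $d$ (rather than allowing it to grow with $\epsilon$, as in Theorem~\ref{main-thm}) further requires that the regularity decomposition be refined enough for the counting lemma to register $d$-dim patterns cleanly. This is exactly what the mixed-uniformity strong regularity lemma and the Ramsey-theoretic patching technique introduced earlier in the paper are designed to accomplish, and I expect that a careful specialization of the argument used for Theorem~\ref{main-thm} yields PO-testability with the desired fixed query dimension.
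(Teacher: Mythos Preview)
Your forward direction is correct and essentially identical to the paper's argument.

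For the reverse direction, your plan would work but is far more laborious than needed. The paper does \emph{not} re-run the regularity/counting/patching machinery here; instead it invokes the main removal lemma (\cref{removal-lemma}) as a black box. The key observation you are missing is that local characterization forces the set $\cH=\{(\bm L^d,\psi):\psi\colon\F_p^d\to[R],\ \psi\notin\cP\}$ of forbidden patterns to be \emph{finite}, since there are only finitely many functions $\F_p^d\to[R]$. Consequently the finite subset $\cH_\epsilon\subseteq\cH$ produced by \cref{removal-lemma} automatically consists of $d$-dimensional patterns, and your worry about ``holding the sampled dimension fixed at $d$'' evaporates: no specialization or strengthening of the argument for \cref{main-thm} is required. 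The contrapositive of \cref{removal-lemma} then says directly that if $f$ is $\epsilon$-far from $\cP$ (equivalently, from having no generic $\cH$-instance), some $H\in\cH$ has density exceeding $\delta(\epsilon,\cH)$. The only remaining work, which you do not mention, is the routine passage from $H$-density (which also counts non-generic instances) to the probability that a uniform $d$-dimensional subspace realizes a forbidden pattern; the paper handles this with the crude bound $p^{d-\dim V}$ on the fraction of linearly dependent $d$-tuples, together with the trivial lower bound $p^{-d\cdot\dim V}$ coming from the existence of at least one bad subspace when $\dim V$ is small.

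In short: cite \cref{removal-lemma} and exploit the finiteness of $\cH$; there is no need to redo any regularity or patching.
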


\begin{rem}
Note that under our definition, the tester does not know the dimension of the domain. This rules out some ``unnatural'' properties such as those properties that behave differently depending on whether the dimension of the domain is even or odd.
\end{rem}

Previous work in arithmetic property testing has focused on a number of special cases including monotone properties \cite{KSV12, Sha10}, ``complexity 1'' properties over $\FF_2$ \cite{BGS15}, and bounded complexity translation-invariant properties \cite{BFHHL13}.

We note that very little was previously known about general linear-invariant patterns. One simple way to define a class of linear-invariant patterns can be done by, for example, choosing an arbitrary subset of ``allowable'' maps $\FF_p^2 \to [R]$ and defining a property of functions $\FF_p^n \to [R]$ to consist of those whose restriction to every 2-dimensional linear subspace is allowable. Even this class of 2-dimensionally-defined patterns was not known to be testable in general prior to this work.

Our innovations are two-fold. We prove a strong arithmetic regularity lemma which, unlike previous arithmetic regularity lemmas, mixes different levels of Gowers uniformity. This allows us to give a more powerful version of the \emph{compactness argument} first introduced by Alon and Shapira \cite{AS08}. With this tool we can remove the \emph{bounded complexity} restriction that was present in all previous work.

Our second innovation is a novel recoloring technique we call \emph{patching}. This technique is critical for working in the linear-invariant setting and  allows us to handle an important obstacle encountered by previous works. Roughly speaking, this obstacle is the inability of regularity methods to regularize functions in a neighborhood of the origin. 

In the rest of this section we give a summary of the proof of the main theorem and its relation to previous work. 

\subsection{Graph removal lemmas and property testing}

We begin with an overview of graph removal lemmas and their proof techniques (see also the survey~\cite{CF13}).

The triangle removal lemma of Ruzsa and Szemer\'edi \cite{RS78} states that for all $\epsilon>0$ there exists $\delta>0$ such that any $n$-vertex graph with at most $\delta n^3$ triangles can be made triangle-free by removing $\epsilon n^2$ edges. This was generalized to the graph removal lemma, first stated explicitly by Alon, Duke, Lefmann, R\"odl, and Yuster \cite{ADLRY94} and by F\"uredi \cite{Fur95}.

A key tool for proving the graph removal lemma is a regularity lemma, namely Szemer\'edi's graph regularity lemma. Roughly speaking, the proof proceeds by using this regularity lemma to partition the input graph $G$ into a small number of structured components. Then we ``clean up'' $G$ by removing at most $\epsilon n^2$ edges. This is done in such a way that either the resulting graph is $H$-free or the original graph $G$ contains many copies of $H$.

An important extension of the graph removal lemma is the induced graph removal lemma, proved by Alon, Fischer, Krivelevich, and Szegedy \cite{AFKS00}.
The induced graph removal lemma states that for every graph $H$ (or finite collection $\cH$ of graphs), for all $\epsilon>0$ there exists $\delta>0$ such that every $n$-vertex graph with at most $\delta n^{v(H)}$ induced copies of $H$ can be made induced $H$-free by adding and/or removing at most $\epsilon n^2$ edges (here \emph{induced $H$-free} means not containing any induced subgraph isomorphic to $H$).

The original proof of the induced removal lemma relies on an extension of Szemer\'edi's graph regularity lemma known as the ``strong regularity lemma.'' Using such a regularity lemma combined with a random sampling argument, one can produce a ``regular model'', that is, a large induced subgraph $X:=G[U]$ (on a constant fraction of the vertices of $G$) that is very regular and approximates the original graph well in a certain sense. Then we ``clean up'' $G$ by adding and/or removing at most $\epsilon n^2$ edges in such a way that if the resulting graph is not induced $H$-free then $X$ (in the original graph) must contain many induced copies of $H$.

Alon and Shapira~\cite{AS08} extended the induced graph removal lemma to an infinite collection of graphs. Namely they prove that for a (possibly infinite) set $\cH$ of graphs and for $\epsilon>0$ there exist $\delta>0$ and $k$ such that the following holds: if $G$ is an $n$-vertex graph with at most $\delta n^{v(H)}$ copies of $H$ for all $H\in\cH$ with $k$ or fewer vertices, then $G$ can be made induced $\cH$-free by adding and/or removing at most $\epsilon n^2$ edges (meaning the modified graph has no induced subgraph isomorphic to $H$ for every $H\in\cH$). This theorem immediately implies (and is equivalent to) the fact that every hereditary graph property is testable with constant query-complexity and one-sided error.

This series of works, in addition to being important results in their own right, gives a framework for proving constant query-complexity property testing algorithms in other settings, given an appropriate regularity lemma. In particular, the hypergraph regularity lemma, proved by Gowers \cite{Gow07} and independently by R\"odl et al. \cite{RNSSK05} can be used with the above techniques to prove an infinite induced hypergraph removal lemma \cite{RS07}. Consequently, every hereditary hypergraph property is testable with constant query-complexity and one-sided error.

\subsection{Arithmetic analogs}

The problem of property testing for functions $f \colon \FF_p^n \to [R]$ has been intensely studied, starting with the the classic work of Blum, Luby, and Rubinfeld~\cite{BLR93} on linearity testing. Much of the work focuses on testing whether some function $f \colon \FF_p^n \to \FF_p$ has certain algebraic properties (e.g., a polynomial of some given type) \cite{AKKLR05,KS08}. 
There is also much interest in testing properties that do not arrive from algebraic characterizations.
Below we give an overview of the developments related to property testing in $\FF_p^n$ from a perspective that is parallel to the graph regularity method developments discussed earlier.

The first arithmetic regularity lemma was proved by Green~\cite{Gre05} using Fourier-analytic techniques, and it laid the groundwork for further developments of the regularity method in the arithmetic setting. These regularity lemmas has since found many applications in additive combinatorics and related fields. In particular, combined with the graph removal framework described above, Green's regularity lemma is suitable for proving an arithmetic removal lemma for ``complexity 1'' systems of linear forms (see \cref{sec-prelim} for the definition of complexity); e.g., see~\cite{BCSX11}.

Kr\'al', Sera, and Vena \cite{KSV12} and independently Shapira \cite{Sha10} bypass the need for an arithmetic regularity lemma and prove the full arithmetic removal lemma by a direct reduction from the hypergraph removal lemma. 
Their results imply that all linear-invariant, subspace-hereditary \emph{monotone} properties are testable with constant query-complexity and one-sided error. 
(A property of functions $\FF_p^n\to\{0,1\}$ is \emph{monotone} if changing 1's to 0's preserves the property.)

Note that the above result is an arithmetic removal lemma and not an \emph{induced} arithmetic removal lemma (hence the restriction to monotone properties). 
Due to the nature of the reduction, the techniques do not seem to be capable of deducing the induced arithmetic removal lemma from the induced hypergraph removal lemma. 

An alternative approach is to apply the strong graph regularity approach~\cite{AFKS00} of proving the induced graph removal lemma to Green's arithmetic regularity lemma.
However there is also a major obstacle to the approach, related to the fact that the origin plays a special role in a vector space while there is no corresponding feature of graphs. 
It turns out that it is not always possible to regularize the space in a neighborhood of the origin \cite{GS15}.

Bhattacharyya, Grigorescu, and Shapira~\cite{BGS15} managed to overcome this obstacle in the special case of vector spaces over $\F_2$. They follow the above strategy, implementing the strong regularity idea~\cite{AFKS00} in the style of Green's arithmetic regularity~\cite{Gre05} along with one additional tool, namely a Ramsey-theoretic result, to prove an infinite induced arithmetic removal lemma for ``complexity 1'' patterns over $\F_2$. Unfortunately, it is known \cite{GS15} that this Ramsey-theoretic result fails over all finite fields other than $\F_2$.

Bhattacharyya, Fischer, and Lovett~\cite{BFL12} managed to overcome this obstacle in a different special case, namely for translation-invariant patterns. When all patterns considered are translation-invariant, the origin no longer plays a special role and one can essentially ignore it while carrying out the strong regularity framework. In addition, \cite{BFL12} allows one to handle higher complexity patterns, which requires developing and applying tools from higher-order Fourier analysis.

Higher-order Fourier analysis plays a central role in modern additive combinatorics. These techniques were initiated by Gowers \cite{Gow01} in his celebrated new proof of Szemer\'edi's theorem, and further developed in a sequence of works by Green, Tao, and Ziegler~\cite{GT08, GT10, GTZ12} settling classical conjectures on the asymptotics of prime numbers patterns. A parallel theory of higher-order Fourier analysis was developed in finite field vector spaces by Bergelson, Tao, and Ziegler~\cite{BTZ10, TZ10, TZ12}, leading to an inverse Gowers theorem over finite fields vector spaces.

For applications to property testing, this line of work culminated in the work of Bhattacharyya, Fischer, Hatami, Hatami, and Lovett~\cite{BFHHL13} (extending \cite{BFL12}), who applied the inverse Gowers theorem over finite fields and developed further equidistribution tools to prove an infinite induced arithmetic removal lemma for all linear-invariant, subspace-hereditary properties that are also translation-invariant and bounded-complexity. Their work follows the strong regularity framework of \cite{AFKS00,AS08}. Our results improve upon this work by removing the translation-invariant and bounded-complexity restrictions.

In addition to their property testing algorithm, Bhattacharyya, Fischer, Hatami, Hatami, and Lovett~\cite{BFHHL13} proved that a large class of somewhat algebraically structured properties are indeed affine-invariant, subspace-hereditary, and locally characterized. These are the so-called ``degree-structural properties''. 
A simple extension of their result \cite[Theorem 16.3]{HHL19} implies that the larger class of ``homogeneous degree-structural properties'' are linear-invariant, linear subspace-hereditary (but not affine-invariant and not subspace-hereditary), and locally characterized, and thus these properties are testable by our main theorem. 
As an example, one can test whether a function $\FF_p^n \to \FF_p$ can be written as $A^2 + B^2$ where both $A$ and $B$ are homogeneous polynomials of some given degree $d$.

\subsection{Our contributions}

\subsubsection{Patching}

In this paper, building on the authors' earlier work with Fox~\cite{FTZ19} for complexity 1 patterns, we develop a new technique called ``patching'' that allows us to overcome the obstacle faced by earlier approaches, namely that a neighborhood of the origin cannot be regularized and fails certain Ramsey properties (unless working over $\F_2$). 
In essence, the patching result states that if there exists some map $f\colon \FF_p^n\to[R]$ that has low density of some colored patterns $\cH$ for $n$ large enough, then for all $m$ there must exist some map $g\colon \FF_p^{m}\to[R]$ that has no $\cH$-instances. 

\begin{thm}[Informal patching result]
For every set of colored pattern $\cH$, there exist $\epsilon_0>0$ and $n_0$ such that the following holds. Either:
\begin{itemize}
    \item for every $n$, there exists a function $f\colon\F_p^n\to[R]$ that is $\cH$-free; or
    \item for every function $f\colon\F_p^n\to[R]$ with $n\geq n_0$, the $H$-density in $f$ is at least $\epsilon_0$ for some $H\in\cH$.
\end{itemize}
\end{thm}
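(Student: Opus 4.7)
The plan is to prove the contrapositive. Suppose alternative (i) fails, so there exists a dimension $m^*$ such that every function $\F_p^{m^*}\to[R]$ contains at least one $\cH$-instance. I will show that alternative (ii) then holds with $n_0 := m^*$ and an appropriate $\epsilon_0>0$; if instead (i) holds, the dichotomy is immediate.

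The argument is a single first-moment / random-restriction step. Given any $f\colon\F_p^n\to[R]$ with $n\geq m^*$, sample a uniformly random $m^*$-dimensional linear subspace $V\leq\F_p^n$. By a standard exchangeability argument, for every pattern $H$ with $\dim H\leq m^*$ the expected density satisfies $\EE_V[d(H,f|_V)] = d(H,f)$: a uniform linear map $\F_p^{\dim H}\to V$ composed with the inclusion $V\hookrightarrow\F_p^n$ (with $V$ itself uniform) is equidistributed with a uniform linear map $\F_p^{\dim H}\to\F_p^n$.

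Only patterns of dimension at most $m^*$ can occur as instances in $f|_V$, and the family $\cH_{m^*} := \{H\in\cH : \dim H \leq m^*\}$ is finite, since for each $k\leq m^*$ there are only $R^{p^k}$ distinct functions $\F_p^k\to[R]$. Thus some finite constant $C=C(\cH,m^*,p,R)$ bounds the expected number of $\cH$-instances in $f|_V$ by $C\cdot \max_{H\in\cH_{m^*}} d(H,f)$. Choosing $\epsilon_0 := 1/(2C)$ and supposing for contradiction that $d(H,f)<\epsilon_0$ for every $H\in\cH$ makes this expectation strictly less than $1$, so some realization $V$ yields an $\cH$-free $f|_V$. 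Identifying $V$ with $\F_p^{m^*}$ contradicts the defining property of $m^*$.

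The only delicate point I foresee is the exchangeability identity $\EE_V[d(H,f|_V)]=d(H,f)$, which is a routine symmetry computation but requires bookkeeping over uniform distributions of linear maps (and mild care about whether density counts injective versus all linear embeddings). I suspect the paper's actual patching theorem is a substantially stronger quantitative/structural result--producing a $g$ with further compatibility with $f$, which is needed for the downstream application to the induced removal lemma--but the bare existential dichotomy stated above can be extracted from a single first-moment argument as outlined.
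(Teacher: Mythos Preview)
Your overall strategy is sound and does prove the informal dichotomy, but the exchangeability identity $\EE_V[d(H,f|_V)]=d(H,f)$ is \emph{not} exactly true: for a uniform $m^*$-dimensional subspace $V$ and a uniform $\bm x\in V^\ell$, the point $\bm 0$ (and more generally any low-rank configuration) is oversampled relative to the uniform measure on $(\F_p^n)^\ell$; e.g.\ $\Pr[\bm x=\bm 0]=p^{-m^*\ell}$ rather than $p^{-n\ell}$. The fix is easy and you essentially anticipate it: count \emph{generic} instances directly. Each generic $H$-instance in $f$ spans an $\ell_H$-dimensional subspace, and the fraction of $m^*$-dimensional $V$ containing it is $\binom{n-\ell_H}{m^*-\ell_H}_p\big/\binom{n}{m^*}_p\le 2\,p^{(m^*-n)\ell_H}$ for $n$ large. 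Since by hypothesis every $f|_V$ carries at least one generic $\cH_{m^*}$-instance, double counting gives
\[
1\le \sum_{H\in\cH_{m^*}}(\#\text{generic }H\text{-instances in }f)\cdot 2\,p^{(m^*-n)\ell_H}\le 2\sum_{H\in\cH_{m^*}} d(H,f)\,p^{m^*\ell_H},
\]
whence some $d(H,f)\ge\epsilon_0$ with $\epsilon_0$ depending only on $p,m^*,|\cH_{m^*}|$.

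This is a genuinely different and far more elementary route than the paper's. The paper does not prove the informal statement in isolation; its actual patching results (\cref{dichotomy} and \cref{patching}) are substantially stronger: the patterns carry labels from a polynomial factor $\fB$, alternative (a) must produce not merely some $\cH$-free coloring but a \emph{canonical} one of the form $x\mapsto\xi(\fnz(x),\fB(x))$, and alternative (b) must place a dense family of instances inside a prescribed high-rank subvariety $\fB'^{-1}(A_I\times\{0\})$. That canonical-form requirement is precisely what forces the paper into an iterated Ramsey-plus-regularity argument (\cref{ramsey}, \cref{ramsey-2}) to find a large subspace on which $f$ already looks canonical, followed by a Cauchy--Schwarz supersaturation to get the density version. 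Your averaging argument cannot see this canonical structure and so would not suffice for the downstream recoloring step in the removal lemma, exactly as you suspect; but for the bare existential dichotomy it is the right tool.
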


Our proof proceeds in two steps. 
First, as in \cite{BFHHL13}, following the strong regularity framework of~\cite{AFKS00} for proving induced graph removal lemmas, we apply a strong arithmetic regularity lemma, which produces a partition $\fB$ of $\FF_p^n$ and a ``regular model'' $X\subseteq\FF_p^n$ made up of a randomly sampled set of atoms from $\fB$. 
Unlike in the graph setting, we cannot ensure that the map $f\colon\FF_p^n\to[R]$ is very regular on every atom of $\fB|_X$. 
In particular, it may be impossible to guarantee that $f$ is regular on the atom containing the origin.
Instead we only ensure that almost every atom of $X$ is very regular. 
Unlike earlier proofs of removal lemmas, our ``recoloring algorithm'' has two components: for the regular atoms we ``clean up'' $f$ as usual, while for the irregular atoms we apply our patching result. Our patching result implies that there is some new global coloring $g\colon\FF_p^n\to[R]$ that avoids some appropriate set of colored patterns. 
To complete the proof we ``patch'' $f$ by replacing it by $g$ on all of the irregular atoms. If $f$ has low density of some set of colored pattern, then our argument shows that these pattern cannot appear in the recoloring, thereby completing the proof of the induced arithmetic removal lemma.

Our proof does not give effective bounds on the rejection probability function $\delta(\epsilon)$ guaranteed by \cref{main-thm}. The ineffectiveness is due to the fact that the current best-known bounds on the inverse theorem for non-classical polynomials are ineffective (the same occurs in \cite{BFHHL13}).

\subsubsection{Unbounded complexity}

The technique used to handle infinite removal lemmas is a compactness argument first introduced by Alon and Shapira~\cite{AS08} in the graph setting. A key ingredient of their proof is a strong regularity lemma.

Bhattacharyya, Fischer, Hatami, Hatami, and Lovett~\cite{BFHHL13} prove that all linear-invariant subspace-hereditary properties that are also translation-invariant and bounded-complexity are testable. Their result follows from an infinite removal lemma for arithmetic patterns of bounded complexity. The proof of this result involves a strong arithmetic regularity lemma and a compactness argument in the spirit of Alon and Shapira.

To remove the bounded complexity assumption from \cite{BFHHL13}, we prove a new strong arithmetic regularity lemma obtained by iterating a weaker arithmetic regularity lemma. The key innovation here is the level of Gowers uniformity used in each iteration is allowed to increase at each step of the process.


\section{Colored patterns and removal lemmas}
\label{sec-removal-lemma}

\cref{main-thm} and \cref{main-thm-PO} both follow from an arithmetic removal lemma for colored linear patterns. In this section we define these objects and state the main removal lemma.

\begin{defn}
A \textbf{linear form} over $\F_p$ in $\ell$ variables is an expression $L$ of the form \[L(x_1,\ldots,x_\ell)=\sum_{i=1}^\ell c_ix_i\] with $c_i\in\F_p$. For any $\F_p$-vector space $V$, the linear form $L$ gives rise to a function $L\colon V^\ell\to V$ that is linear in each variable.
\end{defn}

\begin{defn}
For a prime $p$ and a finite set $\cS$, an \textbf{$\cS$-colored pattern} over $\F_p$ consisting of $m$ linear forms in $\ell$ variables is a pair $(\bm L,\psi)$ given by a system $\bm L=(L_1,\ldots,L_m)$ of $m$ linear forms in $\ell$ variables and a coloring $\psi\colon [m]\to \cS$. Given a finite-dimensional $\F_p$-vector space $V$ and a function $f\colon V\to\cS$, an \textbf{$(\bm L,\psi)$-instance} in $f$ is some $\bx\in V^\ell$ such that $f(L_i(\bx))=\psi(i)$ for all $i\in [m]$. An instance is called \textbf{generic} if $x_1,\ldots,x_\ell$ are linearly independent. We say that $(\bm L,\psi)$ is \textbf{translation-invariant} if the coefficient of $x_1$ is 1 in each of $L_1,\ldots,L_m$. 
\end{defn}

Given a finite-dimensional $\F_p$-vector space $V$ and functions $f_1,\ldots,f_m\colon V\to[-1,1]$, we write \[\Lambda_{\bm L}(f_1,\ldots,f_m):=\E_{\bx\in V^k}[f_1(L_1(\bx))\cdots f_m(L_m(\bx))].\]

\begin{defn}
For an $\cS$-colored pattern over $\F_p$ consisting of $m$ linear forms in $k$ variables $(\bm L,\psi)$, a finite dimensional $\F_p$-vector space $V$, and a function $f\colon V\to\cS$, define the \textbf{$(\bm L,\psi)$-density} in $f$ to be $\Lambda_{\bm L}(f_1,\ldots,f_m)$ where $f_i:=1_{f^{-1}(\psi(i))}$ for each $i \in [m]$.
\end{defn}

Our main removal lemma is the following result.

\begin{thm}[Main removal lemma]
\label{removal-lemma}
Fix a prime $p$ and a finite set $\cS$. Let $\cH$ be a (possibly infinite) set of $\cS$-colored patterns over $\F_p$. For every $\epsilon>0$, there exists a finite set $\cH_{\epsilon}\subseteq\cH$ and $\delta=\delta(\epsilon,\cH)>0$ such that the following holds. Let $V$ be a finite-dimensional $\F_p$-vector space. If $f\colon V\to\cS$ has $H$-density at most $\delta$ for every $H\in \cH_{\epsilon}$, then there exists a recoloring $g\colon V\to\cS$ that agrees with $f$ on all but an at most $\epsilon$-fraction of $V$ such that $g$ has no generic $H$-instances for every $H\in\cH$.
\end{thm}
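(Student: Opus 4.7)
The plan is to follow the Alon–Shapira compactness framework \cite{AS08}, adapted to $\F_p^n$ by combining the new strong arithmetic regularity lemma (mixing Gowers-uniformity levels) with the patching theorem. Roughly, one regularizes $f$, extracts a finite ``witness'' family $\cH_\epsilon$ from the combinatorial complexity of the resulting factor, and then constructs $g$ in two parts: a standard cleanup on the regular atoms plus a patching replacement on the irregular atoms (including the atom containing the origin).

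First I would apply the strong arithmetic regularity lemma to $f\colon V\to\cS$. This produces a polynomial factor $\fB$ whose atoms are joint level sets of a tuple of non-classical polynomials, together with a random ``regular model'' $X\subseteq V$ that is a union of a sampled sub-collection of atoms. The mixed-level nature of the regularity lemma ensures that on almost every atom of $X$, the restriction of $f$ is sufficiently Gowers-uniform at a level that scales with the complexity of the atom itself, with growth functions chosen in advance as a sufficiently fast-growing function of $\epsilon$. Crucially, we do not attempt to regularize every atom; the small collection of ``bad'' atoms — in particular the atom containing $0$ — will be dealt with by patching rather than by regularity.

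Next, I would choose $\cH_\epsilon$ by a compactness/diagonalization argument. Once the regularity parameters are fixed, the space of possible ``templates'' (that is, colorings of the atoms of $\fB|_X$) has bounded combinatorial complexity, so there is a finite subfamily $\cH_\epsilon\subseteq\cH$ such that low $H$-density for every $H\in\cH_\epsilon$ forces every template consistent with $f$ to avoid any generic $H$-instance in $f$ for every $H\in\cH$ (not just for $H\in\cH_\epsilon$). The recoloring $g$ is then built in two stages. On the union of the regular atoms inside $X$, perform the standard cleanup: for each atom, delete colors appearing below a threshold density by rounding to a dominant color. A higher-order Fourier counting lemma, applied atom-by-atom with the Gowers norm level tuned to the atom's complexity, shows that no generic $H$-instance with every form landing in regular atoms survives. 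On the irregular atoms we invoke the patching theorem, which supplies a globally defined coloring of some $\F_p^m$ avoiding a suitable auxiliary family $\cH'$ of colored patterns derived from the structure of $\fB$; pulling this back along an appropriate substitution map and pasting it onto the irregular atoms removes any remaining potential $H$-instance in $g$. The total recoloring mass is at most the sum of the discrepancy between $X$ and $V$, the measure of the irregular atoms, and the cleanup changes, all of which can be forced below $\epsilon$ by the choice of regularity parameters.

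The main obstacle is the interaction at the interface between the cleaned-up regular atoms and the patched irregular atoms. Over $\F_p$ for $p\neq 2$, a naive replacement near the origin easily creates new generic $H$-instances straddling the regular and irregular regions, and the Ramsey-theoretic argument used over $\F_2$ in \cite{BGS15} is known to fail \cite{GS15}. Resolving this requires aligning the globally consistent patching coloring with the polynomial factor $\fB$ via a carefully chosen embedding, so that any configuration that would produce a new $H$-instance on $g$ is precisely the kind of configuration excluded by $\cH'$-freeness of the patch. Verifying this compatibility — i.e., showing that the patching family $\cH'$ can be chosen to simultaneously (i) cover all cross-region instances and (ii) be consistent with the hypothesis of low $\cH_\epsilon$-density — is the heart of the argument and the place where both of the paper's innovations (mixed-level regularity and patching) are needed together.
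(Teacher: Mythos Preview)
Your high-level outline captures the spirit of the argument used for the \emph{projective} removal lemma (\cref{removal-lemma-proj}), but it is not a proof of \cref{removal-lemma} as stated, and there is a genuine missing step. The paper does \emph{not} apply regularity and patching directly to an arbitrary $f\colon V\to\cS$ and an arbitrary family $\cH$ of $\cS$-colored patterns. Instead, \cref{removal-lemma} is deduced from \cref{removal-lemma-proj} via a \emph{projectivization} reduction: one passes to the color set $\ol\cS:=\cS^{\Fpx}$ with the shift action of $\Fpx$, replaces $f$ by the projective function $\ol f(x):=(f(bx))_{b\in\Fpx}$, and converts each pattern $H=(\bm L,\psi)$ into patterns of the form $(\ol{\bm L}^{\ell},\ol\psi)$. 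Only after this reduction does one carry out the regularity/compactness/patching argument.

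The reason this reduction is not merely cosmetic is twofold. First, patterns in $\cH$ may have \emph{infinite complexity} in the sense of \cref{finite-complexity}: some $L_i$ may be identically zero, or two forms may be scalar multiples of one another. For such systems the Gowers counting lemma (\cref{counting-lemma}) simply does not apply, so your step ``a higher-order Fourier counting lemma, applied atom-by-atom with the Gowers norm level tuned to the atom's complexity'' has no content for those $H$. Projectivization collapses each $\Fpx$-line of forms to a single form in $\ol{\bm L}^{\ell}$, which is finite complexity by construction, and handles the zero form by partitioning $\cH$ according to the color at the origin. Second, the patching theorem (\cref{patching}) and the dichotomy (\cref{dichotomy}) are stated and proved for \emph{projective} functions $f$ and sets $\cS$ equipped with an $\Fpx$-action; the canonical colorings $\Xi_{\xi,\iota,\fB}$ that one pastes onto the irregular atoms are projective by design, and the compatibility you correctly flag as ``the heart of the argument'' is established precisely through this $\Fpx$-equivariance. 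Without first putting yourself in the projective setting, you cannot invoke patching as stated, and there is no obvious direct substitute. So your plan needs an additional outer layer: reduce to finite-complexity projective patterns via the $\ol\cS$ construction, prove the projective removal lemma along the lines you sketched (this is where subatom selection, the mixed-level strong regularity lemma, and patching actually live), and then translate back.
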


There are several difficulties in the proof of the main removal lemma. The first is that individual patterns $H\in\cH$ may have ``infinite complexity''. Second, the set of patterns $\cH$ may be infinite. Complicating this, even if all patterns in $\cH$ have finite complexity, these complexities can be unbounded. Finally, there are major difficulties related to the fact that the patterns in $\cH$ are not necessarily translation-invariant.

We use a trick called ``projectivization'' to reduce to the case where all patterns have finite complexity. To do this, we need a slightly modified version of the main removal lemma that we call the projective removal lemma (\cref{removal-lemma-proj}).

A ``compactness argument'' due to Alon and Shapira \cite{AS08} reduces the problem of an infinite collection of patterns to a finite one at the expense of requiring a stronger arithmetic regularity lemma. If the collection of patterns is all complexity at most $d$, we only require a strong $U^{d+1}$-regularity lemma with rapidly decreasing error parameter. In the most general case when the collection of patterns has unbounded complexity we require an even stronger regularity lemma where the error parameter rapidly decreases and the degree of the uniformity norm rapidly increases.

Unless we restrict to the special case where all patterns in $\cH$ are translation-invariant, the origin of the vector space plays a special role. This is unfortunate because it is impossible to regularize a function in the neighborhood of the origin. Since regularity methods are useless here, we turn to a new technique called patching, originally introduced by the authors and Fox \cite{FTZ19}, to deal with the portions of the vector space that cannot be regularized.

\begin{defn}
Let $\cS$ be a finite set equipped with a group action of $\Fpx$ that we denote $c\cdot s$ for $c\in\Fpx$ and $s\in\cS$. Given a finite-dimensional $\F_p$-vector space $V$, a function $f\colon V\to \cS$ is \textbf{projective} if it preserves the action of $\Fpx$, i.e., $f(cx)=c\cdot f(x)$ for all $c\in\Fpx$ and all $x\in V$.
\end{defn}

\begin{defn}
\label{finite-complexity}
A list of linear forms $\bm L=(L_1,\ldots,L_m)$ is \textbf{finite complexity} if no form is identically equal to zero, i.e., $L_i\not\equiv 0$ for all $i\in[m]$, and no two forms are linearly dependent, i.e., $L_i\not\equiv cL_j$ for all $i\neq j$ and $c\in\F_p$.
\end{defn}

\begin{defn}
\label{L}
Fix a prime $p$ and a positive integer $\ell$. We consider two particular systems of linear forms. For $\bm i=(i_1,\ldots,i_{\ell})\in\F_p^{\ell}$, define
\[L^{\ell}_{\bm i}(x_1,\ldots,x_{\ell}):=i_1x_1+\cdots+i_{\ell}x_{\ell}.\]
Then define $\bm L^{\ell}:=(L^{\ell}_{\bm i})_{\bm i\in\F_p^{\ell}}$,
the system of $p^{\ell}$ linear forms in $\ell$ variables that defines an $\ell$-dimensional subspace.

Let $E_\ell\subset\F_p^{\ell}$ be the set of non-zero vectors whose first non-zero coordinate is 1. Then define $\ol{\bm L}^{\ell}:=(L^{\ell}_{\bm i})_{\bm i\in E_\ell}$, a system of $(p^{\ell}-1)/(p-1)$ linear forms in $\ell$ variables.
\end{defn}

Note that unlike $\bm L^{\ell}$, the system $\ol{\bm L}^{\ell}$ has finite complexity. For technical reasons, it will be convenient to reduce the removal lemma for general patterns to the case where all patterns are defined by a system of the form $\bm L^{\ell}$. Then we reduce this to the following projective removal lemma where all patterns are defined by a system of the form $\ol{\bm L}^{\ell}$.

\begin{thm}[Projective removal lemma]
\label{removal-lemma-proj}
Fix a prime $p$ and a finite set $\cS$ equipped with an $\Fpx$-action. Let $\cH$ be a (possibly infinite) set consisting of $\cS$-colored patterns over $\F_p$ of the form $(\ol{\bm L}^{\ell},\psi)$ where $\ell$ is some positive integer and $\psi\colon E_\ell\to\cS$ is some map (see \cref{L} for the definition of $\ol{\bm L}^{\ell}$ and $E_\ell$). For every $\epsilon>0$, there exists a finite subset $\cH_{\epsilon}\subseteq\cH$ and $\delta=\delta(\epsilon,\cH)>0$ such that the following holds. Let $V$ be a finite-dimensional $\F_p$-vector space. If $f\colon V\to\cS$ is a projective function with $H$-density at most $\delta$ for every $H\in \cH_{\epsilon}$, then there exists a projective recoloring $g\colon V\to\cS$ that agrees with $f$ on all but an at most $\epsilon$-fraction of $V$ such that $g$ has no generic $H$-instances for every $H\in\cH$.
\end{thm}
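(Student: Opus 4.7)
The plan is to combine three ingredients announced in the introduction: a new strong arithmetic regularity lemma with rapidly increasing Gowers uniformity degree, an Alon--Shapira style compactness argument, and the patching result. Fix $\epsilon>0$ and enumerate $\cH=\{H_1,H_2,\dots\}$ where $H_i=(\ol{\bm L}^{\ell_i},\psi_i)$. Because the complexities $\ell_i$ may be unbounded, I would iterate a weak arithmetic regularity lemma in which both the error parameter $\eta_i\to 0$ and the Gowers uniformity degree $d_i\to\infty$ at each step, with $d_i$ chosen to exceed the complexity of every pattern among $H_1,\dots,H_i$. Chain these iterations into a tower of refinements $\fB_1\leq \fB_2\leq\cdots$ of polynomial factors. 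By a mean-density pigeonhole (the standard mechanism underlying Alon--Shapira), some index $i_*\leq M(\epsilon)$ has the property that $\fB_{i_*+1}$ approximates $\fB_{i_*}$ in $L^1$. Set $\cH_\epsilon=\{H_1,\dots,H_{i_*}\}$ and pick $\delta$ small enough to drive the contradictions below.

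Apply this strong regularity lemma to the projective function $f$, producing a factor $\fB$ whose atoms are level sets of a tuple of non-classical polynomials. Because $f$ is projective, I would symmetrize the factor so that it is $\Fpx$-equivariant (atoms come in orbits under scaling). Sample a ``regular model'' $X$ as a random union of atoms. The atoms split into \emph{regular} ones, on which the sampled model is equidistributed in the sense needed for the higher-order counting lemma, and \emph{irregular} ones, which necessarily concentrate near the origin because equidistribution in $\F_p^n$ fails there. The quantitative guarantees of the regularity lemma ensure that the irregular atoms together occupy at most an $\epsilon/2$-fraction of $V$.

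On the regular atoms I would clean up $f$ atom-by-atom, replacing it by a projectively chosen constant value in $\cS$ so that no $H\in\cH_\epsilon$-instance survives in the cleaned-up part. If no such cleanup existed, the counting lemma applied to the model $X$ would force a lower bound of order $\delta$ on the $H$-density of the original $f$ for some $H\in\cH_\epsilon$, contradicting the hypothesis. On the irregular atoms near the origin, where regularity is unavailable, the patching result provides a globally defined projective coloring $h\colon V\to\cS$ containing no generic $H$-instance for any $H\in\cH$; I would overwrite $f$ on the irregular region by $h$ and call the resulting function $g$. Projectivity is preserved throughout, the total modification is at most $\epsilon$, and genericity (linear independence of the $\ell$ variables) together with the small dimension of the irregular neighborhood rules out mixed instances.

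The main obstacle I anticipate is the boundary interaction between the cleaned-up regular region and the patched neighborhood of the origin. The cleanup and the patching coloring $h$ must be chosen compatibly so that no generic $H$-instance is created by a point split between the two regions, and this requires carefully tracking which coordinates of a linear form can land in the small irregular region. A secondary difficulty, and a genuinely new ingredient compared with the bounded-complexity framework of Bhattacharyya, Fischer, Hatami, Hatami, and Lovett, is the mixed-degree strong regularity lemma itself: previous strong arithmetic regularity lemmas regularize a single Gowers norm, and orchestrating the iteration so that the uniformity degree grows alongside the refinement---while still producing a partition of bounded complexity at the stopping time and preserving $\Fpx$-equivariance---is the most delicate technical step.
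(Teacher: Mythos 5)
Your high-level plan is aligned with the paper: regularize with a mixed-degree strong arithmetic regularity lemma, run an Alon--Shapira compactness argument, clean up regular atoms, and patch the irregular neighborhood of the origin with a globally defined projective coloring. The mixed-degree strong regularity lemma you describe is indeed the paper's new tool (\cref{reg-3}), and your ``regular/irregular atom'' dichotomy is what the paper formalizes via the subatom selection function (\cref{subatom-selection}).

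However, there is a genuine gap in the compactness step, and it is entangled with the boundary-interaction difficulty you flag at the end. Setting $\cH_\epsilon=\{H_1,\dots,H_{i_*}\}$ from a fixed enumeration does not suffice: the recoloring $g$ may contain a generic $H'$-instance for some $H'\in\cH$ far out in your enumeration, and nothing in the stopping-time argument forces $H'$ to lie among $\{H_1,\dots,H_{i_*}\}$ or to be detectable in the original $f$. The paper instead defines $\cH_\epsilon$ from the (finite) collection of possible ``summary functions'' $(F,\xi)$, where $F\colon(A_I\setminus\tilde A_I)\to 2^{\cS}\setminus\{\emptyset\}$ records which colors are high-density in each regular atom of the model and $\xi$ is the projective canonical coloring used to patch $\tilde V$. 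The notion of $(F,\xi)$ ``partially inducing'' a pattern $H$ is the crucial bookkeeping device: a consistent tuple of atoms exists whose coordinates in regular atoms carry colors from $F$, and whose sub-pattern on irregular atoms is canonically induced by $\xi$. For each parameter list $I$ one picks a single representative $H\in\cH$ per summary function that partially induces something, and $\cH_\epsilon$ is the union of these over bounded $I$. This closes the correctness argument: if $g$ contains a generic $H'$-instance then $(F,\xi)$ partially induces $H'$, hence also some $H\in\cH_\epsilon$, and a counting-lemma calculation shows the $H$-density in $f$ exceeds $\delta$. It is also precisely what solves the mixed-instance problem you identified: partial induction tracks which coordinates of the linear forms land in $\tilde V$ and which land in the regular region, so the compactness machinery is chosen compatibly with the patching. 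Without this mechanism, an enumeration-based $\cH_\epsilon$ has no way to control patterns straddling both regions or of high complexity, and the argument does not close.
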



\section{Preliminaries on higher-order Fourier analysis}
\label{sec-prelim}

\subsection{Gowers norms and complexity}

\begin{defn}
Fix a prime $p$, a finite-dimensional $\F_p$-vector space $V$, and an abelian group $G$. Given a function $f\colon V\to G$ and a shift $h\in V$, define the \textbf{additive derivative} $D_hf\colon V\to G$ by \[(D_hf)(x):=f(x+h)-f(x).\]
Given a function $f\colon V\to\C$ and a shift $h\in V$, define the \textbf{multiplicative derivative} $\Delta_hf\colon V\to\C$ by \[(\Delta_hf)(x):=f(x+h)\overline{f(x)}.\]
\end{defn}

\begin{defn}
Fix a prime $p$ and a finite-dimensional $\F_p$-vector space $V$. Given a function $f\colon V\to\C$ and $d\geq 1$, the {\textbf{Gowers uniformity norm}} $\|f\|_{U^d}$ is defined by \[\|f\|_{U^d}:=\abs{\E_{x,h_1,\ldots,h_d\in V}(\Delta_{h_1}\cdots\Delta_{h_d}f)(x)}^{1/2^d}.\]
\end{defn}

\begin{defn}
\label{complexity}
A system $\bm L=(L_1,\ldots,L_m)$ of $m$ linear forms in $\ell$ variables is \textbf{complexity at most $d$} if for all $\epsilon>0$ there exists $\delta>0$ such that for all $f_1,\ldots,f_\ell\colon V\to[-1,1]$ it holds that
\begin{equation}
\label{counting-lemma}
|\Lambda_{\bm L}(f_1,\ldots,f_\ell)|\leq\epsilon\qquad\text{whenever}\qquad\min_{1\leq i\leq \ell}\|f_i\|_{U^{d+1}}\leq\delta.
\end{equation}
The \textbf{complexity} of $\bm L$ is the smallest $d$ such that the above holds, and infinite otherwise.
\end{defn}

\begin{rem}
\label{rem-complexity}
The above definition is sometimes known as true complexity. It is known that a pattern $(L_1,\ldots,L_m)$ is complexity at most $d$ if and only if $L_1^{d+1},\ldots, L_m^{d+1}$ are linearly independent as $(d+1)$th order tensors~\cite{GW11, HHL16}.

Let $(L_1,\ldots,L_m)$ be any pattern such that no form is identically zero and no two forms are linearly dependent. It is known (for example, because Cauchy-Schwarz complexity is an upper bound for true complexity~\cite{GT10}) that $(L_1,\ldots,L_m)$ has complexity at most $m-2$.

It follows from the above discussion that the definition of complexity given in \cref{complexity} agrees with the definition of finite complexity given in \cref{finite-complexity}.
\end{rem}

\subsection{Non-classical polynomials and homogeneity}

For ease of notation we write
\begin{equation}
\label{U}
\U_k:=\tfrac1{p^k}\Z/\Z\subset\R/\Z
\end{equation}
through the paper.

\begin{defn}
Fix a prime $p$, and a non-negative integer $d\geq0$. Let $V$ be a finite-dimensional $\F_p$-vector space. A \textbf{non-classical polynomial} of degree at most $d$ is a map $P\colon V\to\R/\Z$ that satisfies \[(D_{h_1}\cdots D_{h_{d+1}}P)(x)= 0\] for all $h_1,\ldots,h_{d+1},x\in V$. The \textbf{degree} of $P$ is the smallest $d>0$ such that the above holds. The \textbf{depth} of $P$ is the smallest $k\geq 0$ such that $P$ takes values in a coset of $\U_{k+1}$.
\end{defn}

See \cite[Lemma 1.7]{TZ12} for some basic facts about non-classical polynomials. We record one such fact here.

\begin{lemma}[{\cite[Lemma 1.7(iii)]{TZ12}}]
\label{poly-exists}
Fix a prime $p$, and a finite-dimensional $\F_p$-vector space $V\simeq\F_p^n$. Then $P\colon V\to\R/\Z$ is a non-classical polynomial of degree at most $d$ if and only if it can be expressed in the form
\[P(x_1,\ldots,x_n)=\alpha+\sum_{\genfrac{}{}{0pt}{}{0\leq i_1,\ldots,i_n<p,\, j\geq 0:}{0<i_1+\cdots+i_n\leq d-k(p-1)}}\frac{c_{i_1,\ldots,i_n,k}|x_1|^{i_1}\cdots|x_n|^{i_n}}{p^{k+1}}\pmod 1,\]
for some $\alpha\in\R/\Z$ and coefficients $c_{i_1,\ldots,i_n,k}\in\{0,\ldots,p-1\}$ and where $|\cdot|$ is the standard map $\F_p\to\{0,\ldots,p-1\}$. Furthermore, this representation is unique.
\end{lemma}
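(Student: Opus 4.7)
The plan is to establish the forward direction by an induction on a suitably defined ``weight'' of the monomials, and the reverse direction together with uniqueness by combining a leading-term extraction with an evaluation-based isolation argument.

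For the ``if'' direction, given $\bm i = (i_1, \ldots, i_n) \in \{0, \ldots, p-1\}^n$ with $i_1 + \cdots + i_n > 0$ and $k \geq 0$, set
\[M_{\bm i, k}(x) := \frac{|x_1|^{i_1}\cdots|x_n|^{i_n}}{p^{k+1}} \pmod 1\]
and define its \emph{weight} to be $w(\bm i, k) := (i_1 + \cdots + i_n) + k(p-1)$. I would prove by strong induction on $w(\bm i, k)$ that $M_{\bm i, k}$ is a non-classical polynomial of degree at most $w(\bm i, k)$; summing over such monomials of weight at most $d$ then yields the claimed expressions. The key input is the carry identity $|x + h| = |x| + |h| - p\,\epsilon(x,h)$ with $\epsilon(x,h) \in \{0,1\}$ indicating $|x| + |h| \geq p$. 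Expanding $(|x_j| + |h_j| - p\epsilon_j)^{i_j}$ by the binomial theorem and multiplying across $j$ rewrites $D_h M_{\bm i, k}$ as a $\Z$-linear combination of terms of the form $M_{\bm i', k'}$ with strictly smaller weight: lowering some $i_j$ by $a \geq 1$ costs $a$ from the weight, while each factor of $p$ contributed by a carry cancels against the denominator $p^{k+1}$, reducing the depth and costing $p-1$.

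For the ``only if'' direction I would induct on the degree $d$, with the base $d=0$ immediate since $P$ must be constant. For the inductive step, for each $h \in V$ the derivative $D_h P$ has degree at most $d-1$ and, by the inductive hypothesis, admits a representation of the claimed form. I would then extract the ``top-weight'' coefficients of $P$ by reading off the top-weight part of the family $\{D_h P\}_{h \in V}$, using the explicit expansion of $D_h M_{\bm i, k}$ from the forward direction to invert the derivative on the top-weight layer. The crucial point is that the resulting guesses are consistent across different choices of $h$: this follows from the commutativity $D_h D_{h'} = D_{h'} D_h$ applied to $P$, since any inconsistency in the extracted top coefficients would be visible in a mixed second derivative. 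Subtracting off this guess yields a non-classical polynomial of lower weight, to which the inductive hypothesis applies.

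Uniqueness follows by a leading-term argument: if a nontrivial formal expression of the claimed form vanishes identically $\pmod 1$, order its monomials by the pair $(k, i_1 + \cdots + i_n)$ lexicographically, and evaluate at carefully chosen configurations of standard basis vectors to isolate each top coefficient modulo an appropriate power of $p$; the top coefficient must vanish, and one then descends. The hardest step in the whole plan is the integration in the reverse direction: verifying that the top-weight coefficients extracted from the various $D_h P$ really assemble into a globally consistent tuple $(c_{\bm i, k})$ for $P$ itself rather than only a cocycle of candidates. This is precisely where the detailed combinatorics of the carry terms and the shape of the constraint $i_1 + \cdots + i_n \leq d - k(p-1)$ enter essentially.
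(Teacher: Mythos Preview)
The paper does not prove this lemma at all; it is quoted verbatim from Tao--Ziegler \cite[Lemma 1.7(iii)]{TZ12} and used as a black box, so there is no ``paper's own proof'' to compare against.

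Your outline is broadly the standard route and is close in spirit to the argument in \cite{TZ12}. One point in your ``if'' direction is stated too glibly: when you expand $(|x_j|+|h_j|-p\epsilon_j)^{i_j}$, the carry indicator $\epsilon_j=\epsilon(x_j,h_j)$ is a function of $x_j$, not a constant in $x$, so the resulting terms are not yet $\Z$-linear combinations of the basis monomials $M_{\bm i',k'}$. You must re-express each $\epsilon_j$ (or $\epsilon_j|x_j|^a$) as an integer polynomial in $|x_j|$ of degree at most $p-1$, and then check that the weight bound survives this rewriting; this is exactly where the constraints $0\le i_j<p$ and the shape $i_1+\cdots+i_n\le d-k(p-1)$ do real work, via the identity $|x_j|^p\equiv|x_j|\pmod p$. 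It goes through, but it is not as automatic as your sentence suggests. For the ``only if'' direction your derivative-integration strategy is correct, and you rightly flag the cocycle/consistency step as the crux; in \cite{TZ12} this is organized slightly differently (an explicit classification of degree-$1$ maps $V\to\R/\Z$ followed by induction on $d$), which trades your commutator argument for a direct structural description of the base case.
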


As a corollary we see that in characteristic $p$, every non-classical polynomial of degree at most $d$ has depth at most $\floor{(d-1)/(p-1)}$.

\begin{defn}
A \textbf{homogeneous non-classical polynomial} is a non-classical polynomial $P\colon V\to\R/\Z$ that also satisfies the following. For all $b\in\F_p$ there exists $\sigma^{(P)}_b\in\Z/p^{k+1}\Z$ such that $P(bx)=\sigma_b^{(P)}P(x)$ for all $x\in V$.
\end{defn}

\begin{lemma}[{\cite[Lemma 3.3]{HHL16}}]
\label{sigma}
Fix a prime $p$ and integers $d>0$ and $k\geq 0$ satisfying $k\leq\floor{(d-1)/(p-1)}$. For each $b\in\F_p$ there exists $\sigma_{b}^{(d,k)}\in\Z/p^{k+1}\Z$ such that $\sigma_{b}^{(P)}=\sigma_b^{(d,k)}$ for all homogeneous non-classical polynomials $P$ of degree $d$ and depth $k$. Furthermore, for $b\neq0$, $\sigma_b^{(d,k)}$ is uniquely determined by the following two properties:
\begin{enumerate}[(i)]
    \item $\sigma_{b}^{(d,k)}\equiv b^d\pmod p$
    \item $\paren{\sigma_b^{(d,k)}}^{p-1}=1$ (in $\Z/p^{k+1}\Z$).
\end{enumerate}
\end{lemma}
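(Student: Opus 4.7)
The plan is to reduce the lemma to two sub-claims: (A) any $\sigma\in\Z/p^{k+1}\Z$ satisfying (i) and (ii) is uniquely determined (for $b\in\Fpx$), and (B) for every homogeneous non-classical polynomial $P$ of degree $d$ and depth $k$, the constant $\sigma_b^{(P)}$ satisfies (i) and (ii). Together these force $\sigma_b^{(P)}$ to depend only on $(b,d,k)$, so we may define $\sigma_b^{(d,k)}$ as this common value, and the characterization in (i),(ii) falls out of (A).

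For claim (A), I would invoke the standard Teichm\"uller-lift argument. The group $(\Z/p^{k+1}\Z)^\times$ has order $p^k(p-1)$ and is the internal direct product of its $p$-Sylow subgroup---the kernel of reduction mod $p$---with the subgroup of $(p-1)$th roots of unity. The latter has order $p-1$ and reduces bijectively to $(\Z/p\Z)^\times$, so each residue class $\bar\sigma\in(\Z/p\Z)^\times$ admits a unique lift $\sigma\in\Z/p^{k+1}\Z$ with $\sigma^{p-1}=1$ and $\sigma\equiv\bar\sigma\pmod p$. Applying this to $\bar\sigma=b^d$ yields (A).

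For claim (B), condition (ii) is immediate: since $b^{p-1}=1$ in $\F_p$, iterating $P(bx)=\sigma_b^{(P)}P(x)$ gives $\bigl((\sigma_b^{(P)})^{p-1}-1\bigr)P(x)=0$ for every $x\in V$. Because $P$ has depth exactly $k$, some value $P(x)$ has order $p^{k+1}$ in $\R/\Z$, forcing $(\sigma_b^{(P)})^{p-1}\equiv 1\pmod{p^{k+1}}$. For condition (i), I would reduce $P$ modulo $\U_k$ to obtain $\ol{P}\colon V\to\U_{k+1}/\U_k\cong\F_p$; by the explicit representation in \cref{poly-exists}, $\ol{P}$ is a classical polynomial of degree at most $\bar d:=d-k(p-1)\geq 1$, the lower bound coming from $k\leq\floor{(d-1)/(p-1)}$. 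Homogeneity descends to $\ol{P}(bx)=(\sigma_b^{(P)}\bmod p)\,\ol{P}(x)$; comparing any non-vanishing degree-$\bar d$ monomial yields $\sigma_b^{(P)}\equiv b^{\bar d}\pmod p$, which equals $b^d$ in $\F_p$ for $b\neq 0$ since $b^{p-1}=1$. The case $b=0$ is separate: evaluating $P(0)=\sigma_0^{(P)}P(x)$ at any $x$ with $P(x)\neq 0$ forces $P(0)=\sigma_0^{(P)}=0$, consistent with $0^d=0$.

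The main obstacle I anticipate is verifying that the reduction $\ol{P}$ has classical degree \emph{exactly} $\bar d$ rather than merely at most $\bar d$. In principle the top-weight terms of $P$'s representation could all have depth parameter strictly less than $k$, leaving $\ol{P}$ of strictly smaller degree and thereby producing the wrong exponent of $b$ in (i). The fix is to leverage homogeneity of $P$: the $\sigma_b$-action sorts monomials by their total degree modulo $p-1$, which combined with the uniqueness clause of \cref{poly-exists} forces a top-weight, top-depth coefficient to be non-zero. Ironing out this coefficient analysis---either directly or by inducting on the depth $k$ using the auxiliary polynomial $pP$ of depth $\leq k-1$---is the technical heart of the argument.
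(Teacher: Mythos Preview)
The paper does not prove this lemma; it is cited from \cite[Lemma~3.3]{HHL16} and used as a black box, so there is no in-paper argument to compare your proposal against.

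Your strategy is sound. Claim (A) via the Teichm\"uller lift and your derivation of (ii) are both correct and standard. For (i), you have correctly located the difficulty: showing that the reduction $\ol P = P \bmod \U_k$, viewed as a classical $\F_p$-polynomial, detects the residue $d \bmod (p-1)$. Your proposed fix---that homogeneity ``sorts monomials by total degree modulo $p-1$''---is morally right but needs care, because for $k\ge 1$ the dilation $x\mapsto bx$ is \emph{not} diagonal on the monomial basis of \cref{poly-exists}: one has $|bx|^i \ne |b|^i|x|^i$ as integers. What is true is that the action is block-triangular with respect to the weight filtration $w=|I|+j(p-1)$, and on the top-depth part (mod $\U_k$) it acts by the scalar $b^{|I|}$; combining this with the existence of a weight-$d$ monomial (from $\deg P=d$ and uniqueness in \cref{poly-exists}) and the fact that all surviving monomials must share the eigenvalue $\bar\sigma_b$ pins down $\bar\sigma_b=b^d$. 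Your induction-on-depth alternative via $pP$ also works once you check that $pP$ retains degree $\equiv d\pmod{p-1}$, which comes out of the same filtration analysis. Either way, the obstacle you flag is the genuine content of the argument, and you have identified both it and the shape of its resolution.
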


\begin{thm}[{\cite[Theorem 3.4]{HHL16}}]
\label{homogeneous}
Let $P$ be a non-classical polynomial of degree $d$ and depth $k$. Then $P$ can be written as the sum of homogeneous non-classical polynomials of degree at most $d$ and depth at most $k$. 
\end{thm}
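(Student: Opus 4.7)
The plan is to decompose $P$ via the representation theory of $\Fpx$ acting on non-classical polynomials by argument scaling. First split off $\alpha := P(0)$: by \cref{poly-exists} the remainder $P^{\circ} := P - \alpha$ vanishes at the origin and so takes values in $\U_{k+1}$ (rather than merely a coset of $\U_{k+1}$), while $\alpha$ is trivially a homogeneous non-classical polynomial of degree $0$. The set of non-classical polynomials $V \to \U_{k+1}$ of degree at most $d$ is naturally a $\Z/p^{k+1}\Z$-module, and $\Fpx$ acts on this module via $T_b Q(x) := Q(bx)$; a short induction on additive derivatives of the form $D_h(T_b Q) = T_b(D_{bh} Q)$ shows $T_b$ preserves both degree and depth.

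The arithmetic fact powering the decomposition is $\gcd(p-1,\,p^{k+1}) = 1$, which makes Maschke-style averaging available over $\Z/p^{k+1}\Z$. Hensel's lemma produces a unique multiplicative Teichm\"uller section $\omega : \Fpx \to (\Z/p^{k+1}\Z)^{\times}$ characterized by $\omega(b) \equiv b \pmod{p}$ and $\omega(b)^{p-1} = 1$. By \cref{sigma} we have $\sigma_b^{(d', k')} = \omega(b)^{d'}$ for every admissible pair $(d', k')$ and every $b \in \Fpx$, so the eigenvalues attached to the homogeneous polynomials are exactly the characters $b \mapsto \omega(b)^{r}$ for $r = 0, 1, \dots, p-2$.

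For each such $r$ define the projection
\[
\pi_r(P^{\circ}) \;:=\; \frac{1}{p-1} \sum_{b \in \Fpx} \omega(b)^{-r} \cdot T_b P^{\circ},
\]
interpreting $\tfrac{1}{p-1}$ and $\omega(b)^{-r}$ as elements of $\Z/p^{k+1}\Z$. Each $\pi_r(P^{\circ})$ is a $\Z/p^{k+1}\Z$-linear combination of scalings of $P^{\circ}$, hence is itself a non-classical polynomial of degree at most $d$ and depth at most $k$ vanishing at $0$. Character orthogonality over $\Fpx$ yields $\sum_{r=0}^{p-2} \pi_r(P^{\circ}) = P^{\circ}$, and reindexing the defining sum gives $T_{b_0} \pi_r(P^{\circ}) = \omega(b_0)^{r} \pi_r(P^{\circ})$ for every $b_0 \in \Fpx$. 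Since $\pi_r(P^{\circ})(0) = 0$, setting $\sigma_0 := 0$ handles $b_0 = 0$, so each $\pi_r(P^{\circ})$ is homogeneous. Assembling, $P = \alpha + \sum_{r=0}^{p-2} \pi_r(P^{\circ})$ is the required decomposition.

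The main subtlety is purely bookkeeping: the averaging operator and the scalar multiplications make sense only because $(p-1)^{-1}$ and $\omega(b)^{-r}$ are genuine elements of $\Z/p^{k+1}\Z$ acting on polynomials taking values in the \emph{genuine} module $\U_{k+1}$, which is why extracting $P(0)$ first is essential. Once this is in place, the theorem reduces to the standard isotypic decomposition of a representation of an abelian group whose order is coprime to the module exponent, with \cref{sigma} certifying that the eigenvalues $\omega(b)^r$ produced by the projection are exactly the homogeneity constants demanded by the definition.
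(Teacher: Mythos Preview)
The paper does not supply its own proof of this statement; it is quoted from \cite{HHL16} without argument. Your proof is correct and is essentially the natural one: the isotypic decomposition of the $\Fpx$-module of $\U_{k+1}$-valued non-classical polynomials of degree at most $d$, which works because $\gcd(p-1,p^{k+1})=1$ makes the averaging operators $\pi_r$ well-defined, and \cref{sigma} identifies the resulting eigenvalues $\omega(b)^r$ with the homogeneity constants. The only cosmetic wrinkle is your description of $\alpha=P(0)$ as ``degree $0$'': under the paper's convention the degree is the smallest $d>0$ satisfying the derivative condition, so constants do not literally have degree $0$. This is harmless, since any constant is homogeneous (with $\sigma_b=1$), has depth $0\le k$, and certainly has degree at most $d$, which is all the theorem requires.
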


\subsection{Polynomial factors}

\begin{defn}
Fix a prime $p$. Define \[D_p:=\set{(d,k)\in\Zp\times\Znn : k\leq\floor{(d-1)/(p-1)}},\]and\[\cI_p:=\set{I\in\Znn^{D_p}:\sum_{(d,k)\in D_p}I_{d,k}<\infty}.\] We call $I\in\cI_p$ a \textbf{parameter list}. For $I\in\cI_p$, we write $\|I\|:=p^{\sum_{d,k}(k+1)I_{d,k}}$ and $\deg I$ for the largest $d$ such that $I_{d,k}\neq 0$ for some $k$. We add and subtract parameter list coordinatewise. For $I,I'\in\cI_p$, we write $I\leq I'$ if $I_{d,k}\leq I'_{d,k}$ for all $(d,k)\in D_p$.
\end{defn}

\begin{defn}
For $p$ a prime and $I\in\cI_p$, define the \textbf{atom-indexing set} of $I$ to be
\begin{equation}
\label{A}
A_{I}:=\prod_{(d,k)\in D_p}\paren{\tfrac{1}{p^{k+1}}\Z/\Z}^{I_{d,k}}.
\end{equation}
(Note that $|A_I|=\|I\|$.)
\end{defn}

For $I,I'\in\cI_p$ with $I\leq I'$, write $\pi\colon A_{I'}\to A_I$ for the standard projection map defined by 
\begin{equation}
\label{pi}
\pi\paren{\paren{a^i_{d,k}}_{\genfrac{}{}{0pt}{}{(d,k)\in D_p}{i\in\sqb{I'_{d,k}}}}}\mapsto\paren{a^i_{d,k}}_{\genfrac{}{}{0pt}{}{(d,k)\in D_p}{i\in\sqb{I_{d,k}}}}.
\end{equation}

$A_I$ is equipped with the following $\Fpx$-action:
\begin{equation}
\label{A-action}
c\cdot \paren{a^i_{d,k}}_{\genfrac{}{}{0pt}{}{(d,k)\in D_p}{i\in\sqb{I'_{d,k}}}} := \paren{\sigma_c^{(d,k)} a^i_{d,k}}_{\genfrac{}{}{0pt}{}{(d,k)\in D_p}{i\in\sqb{I'_{d,k}}}},
\end{equation}
where $\sigma_c^{(d,k)}$ is defined in \cref{sigma}.

\begin{defn}
Fix a prime $p$. Let $V$ be a finite-dimensional $\F_p$-vector space and let $I\in\cI_p$ be a parameter list. A \textbf{polynomial factor} on $V$ with parameters $I$, denoted $\fB$, is a collection \[\paren{P^i_{d,k}}_{\genfrac{}{}{0pt}{}{(d,k)\in D_p}{i\in\sqb{I_{d,k}}}}\] where $P^i_{d,k}$ is a homogeneous non-classical polynomial of degree $d$ and depth $k$. We also use $\fB$ to denote the map $\fB\colon V\to A_I$ defined by evaluation of the polynomials. We also associate to $\fB$ the partition of $V$ given by the fibers of this map. The atoms of this partition are called the atoms of $\fB$. We write $\|\fB\|:=\|I\|$ and $\deg\fB:=\deg I$. 
\end{defn}

Note that if $\fB$ is a polynomial factor on $V$ with parameters $I$, then $\fB(cx)=c\cdot\fB(x)$ for all $c\in\Fpx$ and $x\in V$ where the $\Fpx$-action on $A_I$ is defined in \cref{A-action}.

\begin{defn}
Fix a prime $p$. Let $V$ be a finite-dimensional $\F_p$-vector space and let $I,I'\in\cI_p$ be two parameter lists. Let $\fB$ and $\fB'$ be two polynomial factors on $V$ with parameters $I$ and $I'$. We say that $\fB'$ is a \textbf{refinement} of $\fB$ if $I\leq I'$ and the lists of polynomials defining $\fB'$ are extensions of the lists of polynomials defining $\fB$. We say that $\fB'$ is a \textbf{weak refinement} of $\fB$ if the partition of $V$ associated to $\fB'$ is a refinement of the partition associated to $\fB$.
\end{defn}

Note that if $\fB'$ is a refinement of $\fB$, then $\fB=\pi\circ\fB'$ where $\pi\colon A_{I'}\to A_I$ is the projection defined in \cref{pi}.

\begin{defn}
Fix a prime $p$ and integer $d\geq 0$. Let $V$ be a finite-dimensional $\F_p$-vector space. For a non-classical polynomial $P\colon V\to\R/\Z$, define the \textbf{$d$-rank of $P$}, denoted $\rank_d P$, to be the smallest integer $r$ such that there exists non-classical polynomials $Q_1,\ldots,Q_r\colon V\to\R/\Z$ of degree at most $d-1$ and a function $\Gamma\colon(\R/\Z)^r\to\R/\Z$ such that $P(x)=\Gamma(Q_1(x),\ldots,Q_r(x))$ for all $x\in V$.

For a polynomial factor $\fB$ on $V$ with parameters $I\in I_p$, defined by a collection $(P^i_{d,k})_{(d,k)\in D_p, i\in [I_{d,k}]}$ where $P^i_{d,k}$ is a homogeneous non-classical polynomial of degree $d$ and depth $k$, we define the \textbf{rank of $\fB$}, denoted $\rank\fB$, to be\[\min_{\bm\lambda\in\prod_{(d,k)\in D_p}\paren{\Z/p^{k+1}\Z}^{I_{d,k}}}\rank_{d'}\paren{\sum_{(d,k)\in D_p}\sum_{i=1}^{I_{d,k}}\lambda^i_{d,k} P^i_{d,k}}\] where \[d':=\min_{(d,k)\in D_p, i\in[I_{d,k}]}\deg\paren{\lambda^i_{d,k} P^i_{d,k}}.\]
\end{defn}

\begin{lemma}
\label{high-rank-exists}
Fix a prime $p$, a parameter list $I\in\cI_p$, and a positive integer $r$. There exists a constant $n_{high-rank}(p,I,r)$ such that for every $n\geq n_{high-rank}(p,I,r)$, there exists a polynomial factor $\fB$ on $\F_p^n$ with parameters $I$ and satisfying $\rank\fB\geq r$. 
\end{lemma}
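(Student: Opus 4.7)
The plan is to induct on $|I|:=\sum_{(d,k)\in D_p} I_{d,k}$, adjoining one polynomial at a time on a fresh block of coordinates. The base case $|I|=0$ is vacuous, since the empty factor has no nontrivial combinations. For the inductive step, fix $(d_0,k_0)\in D_p$ with $I_{d_0,k_0}>0$ and let $I':=I-e_{d_0,k_0}$, where $e_{d_0,k_0}$ is the standard unit parameter list. By induction there exist $n'$ and a polynomial factor $\fB'$ on $\F_p^{n'}$ with parameters $I'$ and $\rank\fB'\geq r$. For $m$ to be chosen large, I set $n=n'+m$, identify $V=\F_p^n\cong\F_p^{n'}\oplus\F_p^m$, lift the polynomials of $\fB'$ so they depend only on the first $n'$ coordinates, and adjoin a new homogeneous non-classical polynomial $P$ of degree $d_0$ and depth $k_0$ that depends only on the last $m$ coordinates. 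The resulting $\fB$ has parameters $I$.

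To check that $\rank\fB\geq r$, I analyze an arbitrary non-trivial combination $Q=\lambda_0 P+Q'$, where $Q'$ is a combination of the $\fB'$-polynomials with coefficients in the appropriate $\Z/p^{k+1}\Z$'s, and let $d'$ denote the minimum degree among the nonzero terms. If $\lambda_0=0$, then $Q=Q'$ is a non-trivial combination in $\fB'$ and $\rank_{d'}(Q)\geq r$ directly from the inductive hypothesis. If $\lambda_0\neq 0$, I restrict to the slice $\{0\}\times\F_p^m$: every $\fB'$-polynomial is homogeneous of positive degree, hence vanishes at the origin of its variable set, so $Q|_{\{0\}\times\F_p^m}=\lambda_0 P$ (viewed as a function on $\F_p^m$). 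Any representation $Q=\Gamma(R_1,\ldots,R_s)$ with $\deg R_j\leq d'-1$ restricts to such a representation of $\lambda_0 P$ with the same degree bound, giving $s\geq\rank_{d'}(\lambda_0 P)$. Since $\rank_d$ is non-increasing in $d$ and $d'\leq\deg(\lambda_0 P)$, this yields $\rank_{d'}(Q)\geq\rank_{\deg(\lambda_0 P)}(\lambda_0 P)$. It therefore suffices to choose $P$ so that $\rank_{\deg(\lambda_0 P)}(\lambda_0 P)\geq r$ for every nonzero $\lambda_0\in\Z/p^{k_0+1}\Z$.

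The main obstacle is constructing such a $P$, since multiplication by $\lambda_0$ can decrease both degree and depth in a non-trivial way. Writing $\lambda_0=p^t\mu$ with $\mu$ a unit and $0\leq t\leq k_0$, scalar multiplication by $\mu$ is a rank-preserving bijection, so only the $k_0+1$ values of $t$ matter; explicitly I need $\rank_{d_0-t(p-1)}(p^tP)\geq r$ for each $t\in\{0,\ldots,k_0\}$. I plan to arrange this by partitioning the last $m$ coordinates into $k_0+1$ pairwise disjoint sub-blocks and writing $P=\sum_{t=0}^{k_0}P_t$, where $P_t$ is a homogeneous non-classical polynomial of degree $d_0$ and depth $k_0$ supported on sub-block $t$, chosen so that $p^tP_t$ has $(d_0-t(p-1))$-rank at least $r$. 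Each such $P_t$ is built explicitly from the monomial representation of \cref{poly-exists}: one takes a diagonal sum of $r$ homogeneous non-classical pieces placed in still-finer disjoint coordinate ranges, so that slicing off one coordinate range at a time forces the rank of $p^tP_t$ to grow at least linearly with the number of pieces. One final application of the disjoint-variables slicing argument, now internal to the last-$m$ block, yields $\rank_{d_0-t(p-1)}(p^tP)\geq\rank_{d_0-t(p-1)}(p^tP_t)\geq r$ for each $t$, completing the induction, and we take $n_{high\text{-}rank}(p,I,r):=n'+m$.
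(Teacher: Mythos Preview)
Your inductive framework and the restriction argument for handling non-trivial combinations are sound, and you correctly isolate the subtlety that multiplication by $\lambda_0=p^t\mu$ drops both degree and depth, so that one must control $\rank_{d_0-t(p-1)}(p^tP)$ for every $t\in\{0,\ldots,k_0\}$ (a point the paper's writeup in fact glosses over). The further decomposition $P=\sum_t P_t$ on disjoint sub-blocks and the reduction, again by restriction, to bounding $\rank_{d_0-t(p-1)}(p^tP_t)$ are also fine.

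The gap is in your construction of $P_t$. You take $P_t$ to be a diagonal sum $S^{\oplus N}$ of copies of a fixed piece $S$ on disjoint coordinate ranges and assert that ``slicing off one coordinate range at a time forces the rank of $p^tP_t$ to grow at least linearly with the number of pieces.'' But restriction to any single range, or to any proper subset of ranges, only shows $\rank_d\bigl(p^tS^{\oplus N}\bigr)\geq\rank_d\bigl(p^tS^{\oplus N'}\bigr)$ for $N'<N$; it never shows the rank strictly increases as $N$ grows. Slicing gives a lower bound by the \emph{maximum} rank among restrictions, not by a sum, so your iteration stalls at the fixed quantity $\rank_d(p^tS)$, which is bounded independently of $N$. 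This is exactly the step where the paper brings in analytic rank: since $p^tS$ has exact degree $d_0-t(p-1)$, its top-derivative bias $c$ satisfies $c<1$; the bias of $(p^tS)^{\oplus N}$ is $c^N$; and \cite[Lemma~1.15(iii)]{TZ12} converts small bias into large combinatorial rank, giving $\rank_{d_0-t(p-1)}\bigl((p^tS)^{\oplus N}\bigr)\to\infty$. To close your argument you need this bias--rank inequality (or some other genuine additivity lemma for rank of direct sums); restriction alone will not do it.
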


\begin{proof}
For $(d,k)\in D_p$, write $d=(k+a)(p-1)+b$ where $a\geq 0$ and $b\in\{1,\ldots,p-1\}$. By \cref{poly-exists} there exists a non-classical polynomial of degree $d$ and depth $k$. For example, consider \[\frac{|x_1|^{p-1}|x_2|^{p-1}\cdots|x_a|^b}{p^{k+1}}\pmod1.\] By \cref{homogeneous}, we can decompose this non-classical polynomial as the sum of homogeneous non-classical polynomials of degree at most $d$ and depth at most $k$. Let $P\colon \F_p^a\to\U_{k+1}$ be the homogeneous part of degree $d$ and depth $k$.

Next define $Q\colon{\F_p^a}^{\oplus N}\to\U_{k+1}$ by \[Q(\bx_1,\ldots,\bx_N):=P(\bx_1)+\cdots+P(\bx_N).\] This is clearly a homogeneous non-classical polynomial of degree $d$ and depth $k$. We claim that for $N$ large enough, we have $\rank_d Q\geq r$. The proof of this fact uses several basic results from \cite{TZ12} that are not used elsewhere in this paper.

First, since $P$ has degree exactly $d$, we have that $(D_{h_1}\cdots D_{h_d}P)(x)$ is a constant, independent of $x$, but is not identically zero. This implies that
\[
c:=\E_{h_1,\ldots,h_d\in \F_p^a}e^{2\pi i (D_{h_1}\cdots D_{h_d}P)}<1.\]
The quantity $-\log_p c$ is known as the \emph{analytic rank} of $P$. Now a simple calculation shows that
\[\E_{\bm h_1,\ldots,\bm h_d\in (\F_p^a)^{\oplus N}}e^{2\pi i (D_{\bm h_1}\cdots D_{\bm h_d}Q)}=c^N.\] To conclude we use \cite[Lemma 1.15(iii)]{TZ12} which implies that $\rank_d Q\geq -c_d \log_p(c^N)$ for some constant $c_d>0$ only depending on $d$. Since $c<1$, taking $N$ large enough gives $\rank_dQ \geq r$, as desired.

Thus there exist homogeneous non-classical polynomials $Q_{d,k}\colon V_{d,k}\to\U_{k+1}$ of degree $d$ and depth $k$ that satisfy $\rank_d Q_{d,k}\geq r$ for each $(d,k)\in D_p$. Define the vector space \[V:=\bigoplus_{(d,k)\in D_p} V_{d,k}^{\oplus I_{d,k}}.\]

Define the homogeneous non-classical polynomials $\ol{Q}^i_{d,k}\colon V\to\R/\Z$ for each $(d,k)\in D_p$ and $i\in[I_{d,k}]$ such that $\ol{Q}^i_{d,k}$ is equal to $Q_{d,k}$ evaluated on the $i$th copy of $V_{d,k}$ and does not depend on the other coordinates. In particular, we define \[\ol{Q}^i_{d,k}\paren{(x^{i'}_{d',k'})_{\genfrac{}{}{0pt}{}{(d',k')\in D_p}{i'\in[I_{d',k'}]}}}:=Q_{d,k}(x^i_{d,k}).\]

These polynomials define a polynomial factor $\fB$ on $V$ with parameters $I$ such that each of the homogeneous non-classical polynomials defining $\fB$ has rank at least $r$. Furthermore, since the polynomials defining $\fB$ depend on disjoint sets of variables, it follows that all non-trivial linear combinations of the polynomials defining $\fB$ also have high rank. Setting $n_{high-rank}(p,I,R):=\dim V$, we have constructed the desired polynomial factor on $\F_p^{n_{high-rank}(p,I,r)}$. To extend this construction to $\F_p^n$ with $n\geq n_{high-rank}(p,I,r)$ one can simply add on extra variables that none of the polynomials depend on.
\end{proof}

\begin{lemma}[{\cite[Lemma 2.13]{BFHHL13}}]
\label{hyperplane-rank}
Fix a prime $p$ and a positive integers $d,r$. Let $V$ be a finite-dimensional $\F_p$-vector space and let $P\colon V\to\R/\Z$ be a non-classical polynomial of degree $d$ such that $\rank_d(P)\geq r+p$. Let $U\leq V$ be a codimension-1 hyperplane. Then $\rank_d(P|_U)\geq r$ unless $d=1$ and $P|_U$ is identically zero.

As a consequence, let $\fB$ be a polynomial factor on $V$ and let $P\colon V\to\R/\Z$ be a linear polynomial. Write $\fB'$ for the common refinement of $\fB$ and $\{P\}$. If $\rank\fB'\geq r+p$, then $\rank \fB|_U\geq r$ where $U$ is the codimension-1 hyperplane where $P$ vanishes.
\end{lemma}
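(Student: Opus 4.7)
The approach is to prove the first part by contrapositive: assuming $\rank_d(P|_U) < r$, we exhibit $P$ as a function of at most $r + p - 1$ non-classical polynomials of degree at most $d-1$, contradicting $\rank_d(P) \geq r + p$. The key observation is that $P$ is reconstructible from $P|_U$ together with a bounded amount of extra data extracted from a single transverse direction.

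Choose coordinates $x_1, \ldots, x_n$ on $V$ so that $U = \{x_1 = 0\}$. Applying the unique representation from \cref{poly-exists} and grouping terms according to the power of $|x_1|$ yields
\[P(x_1, \ldots, x_n) = R_0(x_2, \ldots, x_n) + |x_1| R_1(x_2, \ldots, x_n) + \cdots + |x_1|^{p-1} R_{p-1}(x_2, \ldots, x_n),\]
where each $R_j$ is a non-classical polynomial in $x_2, \ldots, x_n$ of degree at most $d - j$, and $R_0 = P|_U$. Writing $P|_U = \Gamma(Q_1, \ldots, Q_{r-1})$ with each $Q_i$ of degree $\leq d-1$ on $U$, and extending the $Q_i$ and the $R_j$ to $V$ as functions independent of $x_1$ (so preserving their degrees), we express $P$ as a function of the $r-1$ polynomials $Q_i$, the $p-1$ polynomials $R_1, \ldots, R_{p-1}$, and the single polynomial $|x_1|$. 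Provided $d \geq 2$, all $r + p - 1$ of these have degree at most $d - 1$, yielding $\rank_d(P) \leq r + p - 1$, the desired contradiction.

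The main subtlety is the $d = 1$ case, in which $|x_1|$ has degree exactly $d$ rather than $\leq d - 1$, so the count above breaks. Here the expansion collapses to $P = R_0 + c|x_1|$ for a constant $c \in \R/\Z$. If $c = 0$ then $P$ depends only on $U$ and $\rank_1(P) = \rank_1(P|_U)$, so the contrapositive is immediate; if $c \neq 0$ then $P$ genuinely depends on $x_1$, and a direct case analysis using the definition of $\rank_1$ shows that $\rank_1(P|_U) < r$ under the hypothesis $\rank_1(P) \geq r + p$ forces $P|_U \equiv 0$, matching the exception in the statement.

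For the consequence, let $\tilde P = \sum \lambda^i_{d,k} P^i_{d,k}$ be any nontrivial linear combination of the polynomials defining $\fB$, and let $d'$ be the minimum degree among the nonzero terms $\lambda^i_{d,k} P^i_{d,k}$. Since $\fB'$ consists of the polynomials of $\fB$ together with $P$, this $\tilde P$ is also a nontrivial combination of the polynomials of $\fB'$ (with zero coefficient on $P$), so $\rank_{d'}(\tilde P) \geq r + p$ by hypothesis. Applying the first part to $\tilde P$ and $U$ gives $\rank_{d'}(\tilde P|_U) \geq r$; the only situation in which the first part does not deliver this bound is the exception $d' = 1$ and $\tilde P|_U \equiv 0$, but in that case $\tilde P|_U$ is a trivial combination of the polynomials of $\fB|_U$ and does not enter the definition of $\rank \fB|_U$. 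Taking the minimum over all nontrivial combinations yields $\rank \fB|_U \geq r$, as required.
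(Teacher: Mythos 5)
The paper does not supply a proof of this lemma (it is cited from \cite{BFHHL13}), so I evaluate your proof on its own terms. Your treatment of the first part is correct: the coordinate decomposition $P = R_0 + |x_1|R_1 + \cdots + |x_1|^{p-1}R_{p-1}$ via \cref{poly-exists}, together with the observation that $R_0 = P|_U$ and that $R_1,\dots,R_{p-1}$ and $x_1/p$ all have degree at most $d-1$ once $d\ge 2$, cleanly gives $\rank_d(P)\le (r-1)+(p-1)+1=r+p-1$, the desired contradiction.

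However, your treatment of the consequence has a genuine error in the handling of the exceptional case. When a nonzero $\bm\lambda$ gives $d'=1$ and $\tilde P|_U\equiv 0$, you assert that ``$\tilde P|_U$ is a trivial combination of the polynomials of $\fB|_U$ and does not enter the definition of $\rank\fB|_U$.'' This is wrong on both counts: $\bm\lambda\ne 0$, so $\sum\lambda^i_{d,k}P^i_{d,k}|_U$ is a \emph{nontrivial} combination that happens to equal the zero function, and as such it absolutely does enter the minimum defining $\rank\fB|_U$, contributing $\rank_{d'}(0)=0$ and therefore forcing $\rank\fB|_U=0$. If this case could occur, the consequence would simply be false. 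The correct argument is that the hypothesis rules it out: with $d'=1$ the combination $\tilde P$ is a linear polynomial that vanishes on $U=\ker P$, hence $\tilde P=cP$ for some $c\in\F_p$ (using that the linear polynomials in a polynomial factor are homogeneous, so vanish at $0$). Then $\tilde P - cP$ is a \emph{nonzero} linear combination of the polynomials of $\fB'$ that is identically zero, giving $\rank\fB'=0$, contradicting $\rank\fB'\ge r+p>0$. You need this contradiction argument, not the (incorrect) dismissal of the case.

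Two smaller remarks. In the $d=1$ subcase of the first part, $\rank_1(P|_U)<r$ only forces $P|_U$ to be \emph{constant}, not identically zero; this matches the stated exception once one is in the homogeneous setting (where linear polynomials vanish at $0$), but as written your ``forces $P|_U\equiv 0$'' is imprecise. Second, the way you invoke the first part for the consequence tacitly identifies the parameter $d'$ (defined via the displayed $\min$ over $\deg(\lambda^i_{d,k}P^i_{d,k})$) with $\deg\tilde P$; this identification requires reading that $\min$ as a $\max$ (which is the definition in \cite{BFHHL13} and almost certainly the intended one here, since with a literal $\min$ one would have $d'<\deg\tilde P$ when terms of differing degrees appear, and your decomposition argument would no longer produce auxiliary polynomials of degree at most $d'-1$). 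It would be worth flagging that you are using the ``max'' reading of the rank of a factor.
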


\subsection{Equidistribution and consistency sets}

\begin{defn}
\label{consistency}
Fix a prime $p$, integers $d>0$ and $k\geq 0$ satisfying $k\leq\floor{(d-1)/(p-1)}$, and a system $\bm L=(L_1,\ldots,L_m)$ of $m$ linear forms in $\ell$ variables. Define the \textbf{$(d,k)$-consistency set of $\bm L$}, denoted $\Phi_{d,k}(\bm L)$, to be the subset of $\U_{k+1}^m$ consisting of the tuples $\bm a=(a_1,\ldots,a_m)$ such that there exists a finite-dimensional $\F_p$-vector space $V$, a homogeneous non-classical polynomial $P\colon V\to\U_{k+1}$ of degree $d$ and depth $k$, and a tuple $\bm x\in V^\ell$ such that $a_i=P(L_i(\bm x))$ for all $i\in[m]$.

For a parameter list $I\in\cI_p$, define the \textbf{$I$-consistency set of $\bm L$} to be the set of tuples $\bm a=(a_1,\ldots,a_m)\in A_I^m$ such that for each $(d,k)\in D_p$ and $j\in [I_{d,k}]$ the tuple $\paren{(a_1)_{d,k}^j,\ldots,(a_m)_{d,k}^j}$ lies in $\Phi_{d,k}(\bm L)$.
\end{defn}

\begin{lemma}
\label{consistency-subgroup}
Fix a prime $p$, integers $d>0$ and $k\geq 0$ satisfying $k\leq\floor{(d-1)/(p-1)}$, and a system $\bm L=(L_1,\ldots,L_m)$ of $m$ linear forms. The $(d,k)$-consistency set of $\bm L$ is a subgroup of $\U_{k+1}^m$.
\end{lemma}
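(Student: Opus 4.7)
The plan is to verify the three subgroup axioms by producing for each required element an explicit witnessing pair (polynomial, evaluation tuple). The essential tool is \cref{sigma}: the homogeneity scalars $\sigma_b^{(P)}$ of a homogeneous non-classical polynomial $P$ of degree $d$ and depth $k$ depend only on the pair $(d,k)$ and the element $b \in \F_p$, not on the choice of $P$. Without this uniformity, the sum $P+Q$ of two such polynomials would fail to be homogeneous in general.

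For the identity, first I would fix any homogeneous non-classical polynomial $P \colon V \to \U_{k+1}$ of degree $d$ and depth $k$ (which exists by the construction inside the proof of \cref{high-rank-exists}). Evaluating at $\bm x = \bm 0$ makes every $L_i$ vanish, and the homogeneity relation applied at $b=0$ gives $P(0) = \sigma_0^{(P)} P(x)$ for every $x$. Because $P$ has degree $d>0$ and depth $k$, its image generates $\U_{k+1}$ as a group, forcing $\sigma_0^{(P)} = 0$ in $\Z/p^{k+1}\Z$ and hence $P(0)=0$; thus $\bm 0 \in \Phi_{d,k}(\bm L)$. Closure under inverses is immediate: if $(P,\bm x)$ witnesses $\bm a$, then $-P$ is again a homogeneous non-classical polynomial of degree $d$ and depth $k$, with the same scalars $\sigma_b^{(-P)} = \sigma_b^{(P)}$, so $(-P,\bm x)$ witnesses $-\bm a$.

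The main step is closure under addition. Given $\bm a$ witnessed by $(P \colon V \to \U_{k+1}, \bm x \in V^\ell)$ and $\bm b$ witnessed by $(Q \colon W \to \U_{k+1}, \bm y \in W^\ell)$, I form the direct sum $V \oplus W$, the diagonal tuple $z_i := (x_i, y_i)$, and the polynomial $R(v,w) := P(v) + Q(w)$. Linearity of each $L_i$ gives $L_i(\bm z) = (L_i(\bm x), L_i(\bm y))$, so $R(L_i(\bm z)) = a_i + b_i$. It then suffices to check that $R$ has degree exactly $d$ (taking $d+1$ additive derivatives on $V \oplus W$ splits additively into derivatives of $P$ and $Q$ and vanishes, while $d$ derivatives all in the $V$-direction recover a nontrivial derivative of $P$); that $R$ has depth exactly $k$ (its image lies in $\U_{k+1} + \U_{k+1} = \U_{k+1}$, giving depth $\leq k$, and the slice $R(\cdot, 0) = P$ already has depth $k$, giving depth $\geq k$); and that $R$ is homogeneous, which via
\[R(bv, bw) = \sigma_b^{(P)} P(v) + \sigma_b^{(Q)} Q(w)\]
reduces to the equality $\sigma_b^{(P)} = \sigma_b^{(Q)} = \sigma_b^{(d,k)}$ provided by \cref{sigma}.

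The only delicate point, and the reason for citing \cref{sigma}, is this last matching of homogeneity scalars across different polynomials; everything else is routine bookkeeping on additive derivatives and on images in $\R/\Z$.
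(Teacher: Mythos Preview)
Your proposal is correct and follows essentially the same approach as the paper: the direct-sum construction $R(v,w)=P(v)+Q(w)$ on $V\oplus W$ for closure under addition, and $-P$ for closure under inverses, are exactly what the paper does. You supply more detail than the paper (which simply asserts that $P\oplus Q$ is homogeneous of degree $d$ and depth $k$), and you handle the identity element by arguing $P(0)=0$ directly rather than, as the paper does, noting non-emptiness and deducing $0\in\Phi_{d,k}(\bm L)$ from $\bm a+(-\bm a)$; both routes are fine.
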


\begin{proof}
Suppose $\bm a,\bm b\in \Phi_{d,k}(\bm L)$. We wish to show that $-\bm a$ and $\bm a+\bm b$ both lie in this set. By definition, there exist finite-dimensional $\F_p$-vector spaces $V,W$, homogeneous non-classical polynomials $P\colon V\to\U_{k+1}$ and $Q\colon W\to\U_{k+1}$ of degree $d$ and depth $k$, and tuples $\bm x\in V^k$ and $\bm y\in W^k$ such that $a_i=P(L_i(\bm x))$ and $b_i=Q(L_i(\bm y))$ for all $i\in[m]$.

Note that $-\bm a\in\Phi_{d,k}(\bm L)$ since $-P\colon V\to\U_{k+1}$ is a homogeneous non-classical polynomial of degree $d$ and depth $k$ that satisfies $(-P)(L_i(\bm x))=-a_i$.

Now define $P\oplus Q\colon V\oplus W\to\U_{k+1}$ by $(P\oplus Q)(v\oplus w):=P(v)+Q(w)$. One can easily check that $P\oplus Q$ is a homogeneous non-classical polynomial of degree $d$ and depth $k$. Finally note that $(P\oplus Q)(L_i(\bm x\oplus\bm y))=a_i+b_i$, as desired.\footnote{To be completely correct, we also need to show that $\Phi_{d,k}(\bm L)$ is non-empty, which follows from, for example \cref{poly-exists} and \cref{homogeneous}, which together show the existence of homogeneous non-classical polynomial of degree $d$ and depth $k$ for every $(d,k)\in D_p$.}
\end{proof}

\begin{thm}[{Equidistribution \cite[Theorem 3.10]{HHL16}}]
\label{equidistribution}
Fix a prime $p$, a positive integer $d>0$, and a parameter $\epsilon>0$. There exists $r_{equi}(p,d,\epsilon)$ such that the following holds. Let $V$ be a finite-dimensional $\F_p$-vector space and let $\fB$ be a polynomial factor on $V$ with parameters $I$ such that $\deg\fB\leq d$ and $\rank(\fB)\geq r_{equi}(p,d,\epsilon)$. Then for a system of linear forms $\bm L=(L_1,\ldots,L_m)$ consisting of $m$ forms in $\ell$ variables, and a tuple of atoms $\bm a=(a_1,\ldots,a_m)\in\Phi_I(\bm L)$,
\[\abs{\Pr_{\bx\in V^\ell}\paren{\fB(L_i(\bx))=a_i\text{ for all }i\in[m]}-\frac{1}{|\Phi_I(\bm L)|}}\leq\epsilon.\]
\end{thm}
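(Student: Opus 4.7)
The plan is to prove equidistribution via Fourier analysis on the finite abelian group $A_I^m$, turning the joint-distribution question into exponential sums of non-classical polynomials and then invoking the high-rank hypothesis on $\fB$. Identifying the dual of $A_I$ with $A_I$ itself through a standard pairing, we have
\[\Pr_{\bx\in V^\ell}\paren{\fB(L_i(\bx))=a_i\;\forall i} = \frac{1}{\|I\|^m}\sum_{\bm\lambda\in A_I^m}\ol{\chi_{\bm\lambda}(\bm a)}\cdot\E_\bx \chi_{\bm\lambda}\paren{\fB(L_1(\bx)),\ldots,\fB(L_m(\bx))}.\]

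The \textbf{main term} comes from characters in the annihilator $\Phi_I(\bm L)^\perp \subseteq A_I^m$. For $\bm\lambda\in\Phi_I(\bm L)^\perp$: since $\bm a\in\Phi_I(\bm L)$, we have $\chi_{\bm\lambda}(\bm a) = 1$; and since the image of $\bx\mapsto(\fB(L_1(\bx)),\ldots,\fB(L_m(\bx)))$ always lies in $\Phi_I(\bm L)$ (by the definitions of $\fB$ and of the $I$-consistency set), the character is identically $1$ on this image, so its expectation is $1$. These characters contribute exactly $|\Phi_I(\bm L)^\perp|/\|I\|^m = 1/|\Phi_I(\bm L)|$, which is the desired main term.

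For the \textbf{error}, i.e.\ for $\bm\lambda\notin\Phi_I(\bm L)^\perp$, the inner expectation equals $\E_\bx e^{2\pi i Q(\bx)}$ where
\[Q(\bx) := \sum_{i=1}^m \sum_{(d,k)\in D_p}\sum_{j=1}^{I_{d,k}} \lambda_{i,d,k,j}\, P^j_{d,k}(L_i(\bx))\]
is a non-classical polynomial on $V^\ell$ of degree at most $d$. The key claim is that $Q$ has $d'$-rank at least $\rank\fB - O_{p,d}(1)$, where $d'$ is the top degree of $Q$; this gives $|\E_\bx e^{2\pi i Q(\bx)}|\leq p^{-\Omega(\rank\fB)}$ by standard bias--rank estimates for non-classical polynomials, e.g.\ \cite[Lemma 1.15]{TZ12}. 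Summing over the at most $\|I\|^m$ non-main-term characters, the $1/\|I\|^m$ prefactor cancels the count, so the total error is at most $p^{-\Omega(\rank\fB)}$, which is at most $\epsilon$ provided $\rank\fB\geq r_{equi}(p,d,\epsilon)$ is chosen sufficiently large.

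The \textbf{main obstacle} is translating ``$\bm\lambda\notin\Phi_I(\bm L)^\perp$'' into ``$Q$ has high rank.'' Since $\Phi_{d,k}(\bm L)$ is defined existentially (as the image of an evaluation map over all homogeneous polynomials of degree $d$ and depth $k$), its annihilator is only implicit. I would make it explicit via the true-complexity viewpoint of \cref{rem-complexity}: a tuple $(\mu_1,\ldots,\mu_m)$ annihilates $\Phi_{d,k}(\bm L)$ if and only if for every such $P$ the polynomial $\sum_i \mu_i\, P(L_i(\bx))$ reduces to degree less than $d$. Consequently, $\bm\lambda\notin\Phi_I(\bm L)^\perp$ forces the top-degree component of $Q$ to be a genuine non-trivial combination of the defining polynomials $P^j_{d',k}$ composed with linear substitutions, and this combination has $d'$-rank at least $\rank\fB$ up to a bounded loss (using \cref{hyperplane-rank} to control the rank passage through the linear substitution step) by the very definition of $\rank\fB$. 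The remainder of the argument is then routine bias--rank manipulation.
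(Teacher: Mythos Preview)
The paper does not give its own proof of this theorem; it is quoted from \cite[Theorem~3.10]{HHL16} (see the remark immediately following the statement, which notes that the formulation here combines that theorem with \cite[Corollary~2.13]{HHL16}). So there is no in-paper proof to compare against.

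That said, your Fourier-analytic strategy is the standard one and is essentially what \cite{HHL16} does: expand the indicator over characters of $A_I^m$, identify the main term with $|\Phi_I(\bm L)^\perp|/\|I\|^m = 1/|\Phi_I(\bm L)|$, and bound the remaining characters via bias--rank. Your treatment of the main term is correct.

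There is, however, a genuine gap in your ``main obstacle'' paragraph. The heart of the matter is showing that when $\bm\lambda\notin\Phi_I(\bm L)^\perp$ the polynomial $Q(\bx)=\sum_{i,d,k,j}\lambda_{i,d,k,j}P^j_{d,k}(L_i(\bx))$ on $V^\ell$ has $d'$-rank comparable to $\rank\fB$. Your justification invokes \cref{hyperplane-rank}, but that lemma only controls rank under restriction to a codimension-$1$ hyperplane; it says nothing about linear combinations of compositions $P\circ L_i$ with several distinct linear forms. The polynomial $Q$ lives on $V^\ell$ while $\rank\fB$ is defined on $V$, and the passage between the two is exactly the non-trivial content of the result: one must show that the only linear dependencies among the $P^j_{d,k}\circ L_i$ (modulo lower degree) are the ``universal'' ones coming from tensor-power relations $\sum_i\mu_i L_i^{\otimes d}=0$, and that any residual non-trivial combination inherits high rank from the factor. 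In \cite{HHL16} this is handled by a careful inductive argument (their strong near-orthogonality/equidistribution machinery), not by a hyperplane-restriction bound. Your sketch correctly identifies where the difficulty lies but does not resolve it; as written, the step ``this combination has $d'$-rank at least $\rank\fB$ up to a bounded loss'' is asserted rather than proved.
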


\begin{rem}
To be completely correct, the statement given above follows by combining \cite[Theorem 3.10]{HHL16} and \cite[Corollary 2.13]{HHL16}.
\end{rem}

Note that the probability above is $0$ if $\bm a\not\in\Phi_I(\bm L)$. We typically apply the above theorem with $\epsilon$ that decreases rapidly with $\|I\|$, for example, taking $\epsilon=2\|I\|^m$ and using the fact that $|\Phi_I(\bm L)|\leq\|\fB\|^m$, we see that in this case the probability above is at least $1/(2|\Phi_I(\bm L)|)$.

Consistency sets are often hard to compute exactly. The next two lemmas give exact relations on the sizes of consistency sets in two special cases that occur in this paper.

\begin{defn}
\label{full-dimensional}
Fix a prime $p$. A system $\bm L$ of $m$ linear forms in $\ell$ variables over $\F_p$ is \textbf{full dimensional} if $|\Phi_{d,k}(\bm L)|=|\Phi_{d,k}(\bm L^{\ell})|$ for all $(d,k)\in D_p$ (recall the system $\bm L^{\ell}$ defined in \cref{L} defines an $\ell$-dimensional subspace).
\end{defn}

\begin{lemma}
\label{full-dimensional-exists}
Fix a prime $p$ and a positive integer $\ell$. Let $J\subseteq \F_p^\ell$ be a set that contains at least one vector in each direction (i.e., for each $\bm i\in\F_p^\ell$, there exists $\bm j\in J$ and $\in\Fpx$ such that $\bm i=b\bm j$). Consider the system $\bm L_J:=(L^\ell_{\bm i})_{\bm i\in J}$ of $|J|$ linear forms in $\ell$ variables (recall the linear form $L^\ell_{\bm i}$ defined in \cref{L})). Then $\bm L_J$ is full dimensional.
\end{lemma}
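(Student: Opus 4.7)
The plan is to show, for every $(d,k)\in D_p$, that the natural projection
\[ r\colon \Phi_{d,k}(\bm L^{\ell})\to\Phi_{d,k}(\bm L_J),\qquad (a_{\bm i})_{\bm i\in\F_p^\ell}\mapsto(a_{\bm j})_{\bm j\in J} \]
is a bijection, which immediately yields the desired equality of cardinalities.

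Surjectivity is straightforward. Given $\bm a\in\Phi_{d,k}(\bm L_J)$, take a witness $(V,P,\bx)$ where $P\colon V\to\U_{k+1}$ is a homogeneous non-classical polynomial of degree $d$ and depth $k$ with $P(L^\ell_{\bm j}(\bx))=a_{\bm j}$ for every $\bm j\in J$. Then $a'_{\bm i}:=P(L^\ell_{\bm i}(\bx))$ defines a tuple in $\Phi_{d,k}(\bm L^\ell)$ (witnessed by the same $(P,\bx)$) whose restriction to $J$ equals $\bm a$.

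Injectivity is the substantive part, and reduces to the observation that homogeneity forces the values on $J$ to propagate to all of $\F_p^\ell$. Suppose $\bm a,\bm a'\in\Phi_{d,k}(\bm L^\ell)$ agree on $J$, and let $(P,\bx)$ realize $\bm a$. For $\bm i\in\F_p^\ell\setminus\{\bm 0\}$ pick, by hypothesis, $\bm j\in J$ and $b\in\Fpx$ with $\bm i=b\bm j$. Since $L^\ell_{\bm i}(\bx)=bL^\ell_{\bm j}(\bx)$, the homogeneity relation (\cref{sigma}) gives
\[ a_{\bm i}=P(L^\ell_{\bm i}(\bx))=\sigma_b^{(d,k)}P(L^\ell_{\bm j}(\bx))=\sigma_b^{(d,k)}a_{\bm j}. \]
The identical formula holds for $\bm a'$ with its own witness, so $a_{\bm i}=\sigma_b^{(d,k)}a_{\bm j}=\sigma_b^{(d,k)}a'_{\bm j}=a'_{\bm i}$. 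For $\bm i=\bm 0$ one has $L^\ell_{\bm 0}\equiv 0$, and any non-constant homogeneous non-classical polynomial satisfies $P(0)=P(0\cdot x)=\sigma_0^{(P)}P(x)$ for all $x$, which forces $\sigma_0^{(P)}=0$ and hence $P(0)=0$; thus $a_{\bm 0}=0=a'_{\bm 0}$. Combining, $\bm a=\bm a'$.

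The only bookkeeping subtlety I anticipate is verifying that the formula $a_{\bm i}=\sigma_b^{(d,k)}a_{\bm j}$ is independent of the decomposition: if $\bm i=b\bm j=b'\bm j'$ with $\bm j,\bm j'\in J$, then $\bm j'=(b/b')\bm j$, and consistency of $\bm a$ on $\bm L_J$ applied to the witness gives $a_{\bm j'}=\sigma_{b/b'}^{(d,k)}a_{\bm j}$. Combined with multiplicativity $\sigma_{bc}^{(d,k)}=\sigma_b^{(d,k)}\sigma_c^{(d,k)}$ (which follows by iterating $P(bcx)=\sigma_b^{(P)}P(cx)=\sigma_b^{(P)}\sigma_c^{(P)}P(x)$ for a nonzero homogeneous $P$ and invoking the uniqueness in \cref{sigma}), one gets $\sigma_b^{(d,k)}a_{\bm j}=\sigma_{b'}^{(d,k)}a_{\bm j'}$, so no ambiguity arises. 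I expect no serious obstacle beyond this bookkeeping — the entire proof is a direct exploitation of the homogeneity relation packaged in \cref{sigma}.
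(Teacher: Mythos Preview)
Your proof is correct and follows essentially the same approach as the paper: both arguments exploit the homogeneity relation from \cref{sigma} to show that the values $(a_{\bm j})_{\bm j\in J}$ determine $(a_{\bm i})_{\bm i\in\F_p^\ell}$ via $a_{\bm i}=\sigma_b^{(d,k)}a_{\bm j}$, thereby establishing that restriction $\Phi_{d,k}(\bm L^\ell)\to\Phi_{d,k}(\bm L_J)$ is a bijection. Your extra bookkeeping (well-definedness under different decompositions and the separate treatment of $\bm i=\bm 0$) is harmless but unnecessary here, since the hypothesis with $b\in\Fpx$ forces $\bm 0\in J$, and for injectivity you only need that \emph{some} valid decomposition yields the formula, not that all decompositions agree.
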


As a special case of this result we see that the system $\ol{\bm L}^\ell$, defined in \cref{L}, is full rank.

\begin{proof}
Note that the system $\bm L_J$ is a subsystem of $\bm L^{\ell}$. This immediately implies that $|\Phi_{d,k}(\bm L_J)|\leq|\Phi_{d,k}(\bm L^\ell)|$, since if $(a_{\bm i})_{\bm i\in\F_p^\ell}\in\Phi_{d,k}(\bm L^\ell)$, then $(a_{\bm i})_{\bm i\in J}\in\Phi_{d,k}(\bm L_J)$.

To go the other direction, we use the homogeneity of our polynomials. Suppose $(a_{\bm j})_{\bm j\in J}\in\Phi_{d,k}(\bm L_J)$. Thus there exists a finite-dimensional $\F_p$-vector space $V$ and a homogeneous non-classical polynomial $P\colon V\to\U_{k+1}$ of degree $d$ and depth $k$, and a vector $\bm x\in V^\ell$ such that $P(L^\ell_{\bm j}(\bm x))=a_{\bm j}$ for all $\bm j\in J$. Now by assumption, for $\bm i\in\F_p^\ell$, there exists $\bm j\in J$ and $b\in\F_p$ such that $\bm i=b\bm j$. Define $a_{\bm i}:=\sigma_b^{(d,k)}a_{\bm j}$ (recall the definition of $\sigma_b$ from \cref{sigma}). We claim that $(a_{\bm i})_{\bm i\in \F_p^\ell}\in\Phi_{d,k}(\bm L^\ell)$. This is true simply because \[P(L^{\ell}_{\bm i}(x))=P(bL^{\ell}_{\bm j}(\bm x))=\sigma_b^{(d,k)}P(L^{\ell}_{\bm j}(x))=\sigma_b^{(d,k)}a_{\bm j}=a_{\bm i}\] where $\bm j\in J$ and $b\in\F_p$ are defined as above. Thus $|\Phi_{d,k}(\bm L_J)|\geq|\Phi_{d,k}(\bm L^\ell)|$, as desired.
\end{proof}

\begin{lemma}
\label{CS-consistency}
Fix a prime $p$. Let $\bm L$ be a system of $m$ linear forms in $\ell$ variables over $\F_p$. Say $\bm L$ is defined by $M$, an $m\times \ell$ matrix. (By this we mean that $L_i(x_1,\ldots,x_{\ell})=M_{i,1}x_1+\cdots+M_{i,\ell}x_{\ell}$ for all $i\in[m]$.) Let $\bm L'$ be a system of $m(n+1)$ linear forms in $\ell+\ell'$ variables, defined by a matrix of the form \[\paren{\begin{array}{c|c} M & 0 \\\hline \begin{array}{c} c_1 M\\ \vdots \\ c_nM\end{array}& N\end{array}}\] where $c_1,\ldots,c_n\in\F_p$ and $N$ is an $mn\times \ell'$ matrix. Let $\bm L''$ be the system of $m(2n+1)$ linear forms in $\ell+2\ell'$ variables, defined by the matrix \[\paren{\begin{array}{c|c|c} M & 0 & 0 \\\hline \begin{array}{c} c_1 M\\ \vdots \\ c_n M\end{array} & N & 0 \\\hline \begin{array}{c} c_1 M\\ \vdots \\ c_n M \end{array} & 0 & N\end{array}}.\] Then for all $(d,k)\in D_p$, we have \[\abs{\Phi_{d,k}(\bm L)}\cdot\abs{\Phi_{d,k}(\bm L'')}=\abs{\Phi_{d,k}(\bm L')}^2.\]
\end{lemma}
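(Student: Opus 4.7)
The plan is to establish a bijection between $\Phi_{d,k}(\bm L'')$ and the fiber product $\Phi_{d,k}(\bm L') \times_{\Phi_{d,k}(\bm L)} \Phi_{d,k}(\bm L')$, where both projections $\Phi_{d,k}(\bm L') \to \Phi_{d,k}(\bm L)$ are the forgetful map that retains only the first $m$ coordinates. The matrix defining $\bm L''$ decomposes as two copies of $\bm L'$---one using the variables $\bm x,\bm y$ and the other using $\bm x,\bm y'$---glued along the shared top block which is exactly $\bm L$. Consequently the assignment
\[
\pi\colon \Phi_{d,k}(\bm L'') \to \Phi_{d,k}(\bm L') \times_{\Phi_{d,k}(\bm L)} \Phi_{d,k}(\bm L'),\qquad (\bm a_0, \bm a_1, \bm a_2) \longmapsto \bigl((\bm a_0, \bm a_1),\, (\bm a_0, \bm a_2)\bigr)
\]
is well-defined. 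Once I show $\pi$ is a bijection and that the forgetful map $\Phi_{d,k}(\bm L')\to\Phi_{d,k}(\bm L)$ is surjective, the fiber product will have size exactly $|\Phi_{d,k}(\bm L')|^2/|\Phi_{d,k}(\bm L)|$ and the lemma follows.

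Injectivity of $\pi$ is immediate. For surjectivity, I invoke \cref{consistency-subgroup}: both $\Phi_{d,k}(\bm L')$ and $\Phi_{d,k}(\bm L'')$ are subgroups of the relevant products of $\U_{k+1}$. Given $((\bm a_0,\bm a_1),(\bm a_0,\bm a_2))$ in the fiber product, I decompose it as
\[
\bigl((\bm a_0, \bm a_1),\, (\bm a_0, \bm a_2)\bigr) = \bigl((\bm a_0, \bm a_1),\, (\bm a_0, \bm a_1)\bigr) + \bigl((\bm 0, \bm 0),\, (\bm 0, \bm a_2 - \bm a_1)\bigr).
\]
The kernel element $(\bm 0,\bm a_2-\bm a_1)=(\bm a_0,\bm a_2)-(\bm a_0,\bm a_1)$ lies in $\Phi_{d,k}(\bm L')$ by the subgroup property, so it suffices to lift each summand independently to $\Phi_{d,k}(\bm L'')$ and then add.

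The diagonal summand is easy: if $(P,\bm x_0,\bm y_0)$ realizes $(\bm a_0,\bm a_1)$ for $\bm L'$, then $(\bm x_0,\bm y_0,\bm y_0)$ realizes $(\bm a_0,\bm a_1,\bm a_1)$ for $\bm L''$. For the kernel summand, take a realization $(Q,\bm x_0',\bm y_0')$ of $(\bm 0,\bm a_2-\bm a_1)$ and plug $(\bm x_0',\bm 0,\bm y_0')$ into $\bm L''$; the top $m$ coordinates vanish by hypothesis, the middle $mn$ coordinates equal $Q(c_jL_i(\bm x_0'))=\sigma_{c_j}^{(d,k)}Q(L_i(\bm x_0'))=0$ by homogeneity of $Q$, and the last $mn$ coordinates recover $\bm a_2-\bm a_1$. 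The same $\bm y=\bm 0$ trick gives surjectivity of $\Phi_{d,k}(\bm L')\to\Phi_{d,k}(\bm L)$: any realization of $\bm a_0$ extends to $(\bm a_0,(\sigma_{c_j}^{(d,k)}(\bm a_0)_i)_{i,j})\in\Phi_{d,k}(\bm L')$. I expect this to be essentially the entire argument. The only subtle point---and where all the content lies---is that one cannot prove surjectivity of $\pi$ by naively combining two polynomial realizations into a single polynomial on a direct-sum vector space, because non-classical polynomials are not additive under evaluation; the decomposition via the subgroup property, together with the homogeneity identity $P(cx)=\sigma_c^{(d,k)}P(x)$, does the essential work of decoupling the two halves of $\bm L''$.
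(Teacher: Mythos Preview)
Your argument is correct and takes a genuinely different route from the paper's. The paper proves the identity by constructing explicit injections in both directions between $\Phi_{d,k}(\bm L)\times\Phi_{d,k}(\bm L'')$ and $\Phi_{d,k}(\bm L')\times\Phi_{d,k}(\bm L')$, using direct-sum polynomials $P\oplus Q$ on $V\oplus W$ and carefully designed coordinate maps (which also involve the homogeneity constants $\sigma_{c_j}^{(d,k)}$). Your approach is more structural: you recognize $\Phi_{d,k}(\bm L'')$ as the fiber product $\Phi_{d,k}(\bm L')\times_{\Phi_{d,k}(\bm L)}\Phi_{d,k}(\bm L')$ and then use the group structure from \cref{consistency-subgroup} to count it, exploiting that a surjective group homomorphism has equal-sized fibers. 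The one nontrivial step---lifting the kernel element $(\bm 0,\bm a_2-\bm a_1)$ to $(\bm 0,\bm 0,\bm a_2-\bm a_1)\in\Phi_{d,k}(\bm L'')$ by setting $\bm y=\bm 0$ and invoking $Q(c_jL_i(\bm x_0'))=\sigma_{c_j}^{(d,k)}\cdot 0$---is exactly where homogeneity enters, and your observation that a naive direct-sum realization would double the top block is precisely why this decomposition is needed. Your proof is shorter and makes the role of \cref{consistency-subgroup} more transparent; the paper's proof is more self-contained in that it never appeals to the fibers being equal-sized, instead exhibiting the bijection by hand.
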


\begin{proof}
We construct injections between $\Phi_{d,k}(\bm L)\times\Phi_{d,k}(\bm L'')$ and $\Phi_{d,k}(\bm L')\times\Phi_{d,k}(\bm L')$ in both directions. Write $\sigma_i:=\sigma_{c_i}^{d,k}$ for $i\in[n]$ for the rest of the proof (see \cref{sigma} for the definition).

Consider $\bm a=(a_1,\ldots,a_m)\in\Phi_{d,k}(\bm L)$ and $\bm b =(b_1,\ldots,b_{m(2n+1)})\in\Phi_{d,k}(\bm L'')$. By definition, there exists a finite-dimensional $\F_p$-vector space $V$, a homogeneous non-classical polynomial $P\colon V\to \U_{k+1}$ of degree $d$ and depth $k$ and a vector $\bm x\in V^{\ell}$ such that $P(L_i(\bm x))=a_i$ for all $i\in[m]$. Also by definition, there exists a finite-dimensional $\F_p$-vector space $W$, a homogeneous non-classical polynomial $Q\colon W\to\U_{k+1}$ of degree $d$ and depth $k$ and a vector $(\bm x',\bm y,\bm y')\in W^{\ell}\times W^{\ell'}\times W^{\ell'}$ such that $Q(L''_i(\bm x',\bm y,\bm y'))=b_i$ for all $i\in[m(2n+1)]$.

Now we map $(\bm a,\bm b)$ to the pair $(\bm a',\bm b')$ where $b'_i=b_i$ for $i\in[m(n+1)]$ and $a'_i=a_i+b_i$ for $i\in[m]$ and $a'_{tm+i}=\sigma_ta_i+b_{m(n+t)+i}$ for $i\in[m]$ and $t\in[n]$. We can easily check that no two pairs $(\bm a,\bm b)$ map to the same pair $(\bm a',\bm b')$. All that remains is to check that $\bm a',\bm b'\in\Phi_{d,k}(\bm L')$.

Define $P\oplus Q\colon V\oplus W\to \U_{k+1}$ by $(P\oplus Q)(x\oplus y):=P(x)+Q(y)$. This is clearly a homogeneous non-classical polynomial of degree $d$ and depth $k$. Note that 
$\bm z:=(\bm x\oplus \bm x',\bm 0\oplus \bm y')\in(V\oplus W)^{\ell+\ell'}$
satisfies $(P\oplus Q)(L'_i(\bm z))=a'_i$ for all $i\in[m(n+1)]$. Similarly note that
$\bm z':=(\bm 0\oplus \bm x',\bm 0\oplus \bm y)\in(V\oplus W)^{\ell+\ell'}$
satisfies $(P\oplus Q)(L'_i(\bm z'))=b'_i$ for all $i\in[m(n+1)]$. This demonstrates the first injection.

Now consider $\bm a=(a_1,\ldots,a_{m(n+1)})\in\Phi_{d,k}(\bm L')$ and $\bm b=(b_1,\ldots,b_{m(n+1)})\in\Phi_{d,k}(\bm L')$. By definition, there exists a finite-dimensional $\F_p$-vector space $V$, a homogeneous non-classical polynomial $P\colon V\to \U_{k+1}$ of degree $d$ and depth $k$ and a vector $(\bm x,\bm y)\in V^{\ell}\times V^{\ell'}$ such that $P(L_i'(\bm x,\bm y))=a_i$ for $i\in[m(n+1)]$. Also by definition, there exists a finite-dimensional $\F_p$-vector space $W$, a homogeneous non-classical polynomial $Q\colon W\to\U_{k+1}$ of degree $d$ and depth $k$ and a vector $(\bm x',\bm y')\in W^{\ell}\times W^{\ell'}$ such that $Q(L_i'(\bm x',\bm y'))=b_i$ for $i\in[m(n+1)]$.

We map $(\bm a,\bm b)$ to the pair $(\bm a',\bm b')$ where $a'_i=a_i$ for $i\in[m]$ and $b'_i=a_i+b_i$ for $i\in[m]$ and $b'_{m+i}=a_{m+i}$ for $i\in[mn]$ and $b'_{m(n+1)+i}=b_{m+i}$ for $i\in[mn]$. We can easily check that no two pairs $(\bm a,\bm b)$ map to the same pair $(\bm a',\bm b')$. All that remains is to check that $\bm a'\in\Phi_{d,k}$ and $\bm b'\in\Phi_{d,k}(\bm L'')$.

As above, define the homogeneous non-classical polynomial $P\oplus Q\colon V\oplus W\to \U_{k+1}$ of degree $d$ and depth $k$ by $(P\oplus Q)(x\oplus y):=P(x)+Q(y)$. Note that $\bm z:=\bm x\oplus \bm 0\in(V\oplus W)^{\ell}$ satisfies $(P\oplus Q)(\bm L_i(\bm z))=a'_i$ for $i\in[m]$ and $\bm z':=(\bm x\oplus \bm x',\bm y\oplus \bm 0,\bm 0\oplus \bm y')\in(V\oplus W)^{\ell}\times (V\oplus W)^{\ell'}\times (V\oplus W)^{\ell'}$ satisfies $(P\oplus Q)(L_i''(\bm z'))=b'_i$ for $i\in[m(2n+1)]$, as desired.
\end{proof}

\subsection{Subatom selection functions}

A situation that often occurs is the following. We have a polynomial factor $\fB$ with parameters $I$ and a refinement $\fB'$ with parameters $I'$. We use the word atom to refer to the atoms of the partition induced by $\fB$; these atoms are indexed by $A_I$. We use the word subatom to refer to the atoms of the partition induced by $\fB'$; these atoms are indexed by $A_{I'}$. The projection map $\pi\colon A_{I'}\to A_I$, defined in \cref{pi}, maps a subatom to the atom that it is contained in.

We wish to designate one subatom inside each atom as special. This choice is given by a map $s\colon A_{I}\to A_{I'}$ that is a right inverse for $\pi$. In this paper we define a certain class of these maps that we call subatom selection functions that have several desirable properties.

First we define certain polynomials $P_{d,k}\colon\F_p\to\U_{k+1}$ for each $(d,k)\in D_p$. For each $i\in\{1,\ldots,p-1\}$, there exists a homogeneous non-classical polynomial $\F_p\to\U_{k+1}$ of degree $k(p-1)+i$ and depth $k$ in one variable. (This follows from \cref{poly-exists} and \cref{homogeneous}.) Let $P_{k(p-1)+i,k}$ be one such a polynomial. Finally, define \[P_{(k+s)(p-1)+i,k+s}:=P_{k(p-1)+i,k}\] for all $i\in\{0,\ldots,p-1\}$ and $s\geq 0$. This defines $P_{d,k}$ for each $(d,k)\in D_p$.

\begin{defn}
\label{sas}
Fix a prime $p$ and parameter lists $I,I'\in\cI_p$ satisfying $I\leq I'$. Let $c_{d,k}^{i,j}\in\Z/p^{k+1}\Z$ be arbitrary elements for $(d,k)\in D_p$ and $i\in[I_{1,0}]$ and $I_{d,k}<j\leq I'_{d,k}$. A \textbf{subatom selection function} is a map of the form $s_{\bm c}\colon A_I\to A_{I'}$, defined by
\[\sqb{s_{\bm c}(a)}_{d,k}^i=
\begin{cases}
a^i_{d,k} &\text{if }i\leq I_{d,k}\\
\sum_{j=1}^{I_{1,0}}c_{d,k}^{j,i}P_{d,k}(|a_{1,0}^j|)\quad&\text{otherwise,}
\end{cases}\] 
where the maps $P_{d,k}\colon\F_p\to\U_{k+1}$ were defined in the preceding paragraph and $|\cdot|$ is the standard map $\U_1\to\F_p$.
\end{defn}

\begin{lemma}
\label{sas-properties}
Fix a prime $p$, parameter lists $I,I'\in\cI_p$ satisfying $I\leq I'$, and a subatom selection function $s_{\bm c}\colon A_I\to A_{I'}$. The following hold:
\begin{enumerate}[(i)]
\item $\pi\circ s=\Id$ (where $\pi\colon A_{I'}\to A_I$ is defined in \cref{pi});
\item for $a\in A_I$ and $b\in\Fpx$, we have $b\cdot s_{\bm c}(a)=s_{\bm c}(b\cdot a)$ (where the action of $\Fpx$ on $A_I$ and $A_{I'}$ is defined in \cref{A-action});
\item for every system $\bm L$ of $m$ linear forms and every consistent tuple of atoms $(a_1,\ldots,a_m)\in\Phi_I(\bm L)$, we have \[\paren{s_{\bm c}(a_1),\ldots,s_{\bm c}(a_m)}\in \Phi_{I'}(\bm L)\](see \cref{consistency} for the definition of the consistency sets $\Phi_I(\bm L)$ and $\Phi_{I'}(\bm L)$).
\end{enumerate}
\end{lemma}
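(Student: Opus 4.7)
The three parts follow by unpacking \cref{sas} and exploiting the homogeneity of the auxiliary polynomials $P_{d,k}$. Part (i) is immediate from the definition: $\sqb{s_{\bm c}(a)}_{d,k}^i=a_{d,k}^i$ whenever $i\leq I_{d,k}$, and by \cref{pi} the projection $\pi$ reads off exactly these coordinates, so $\pi\circ s_{\bm c}=\Id$.

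For part (ii) I would verify the identity coordinatewise. In each original slot $i\leq I_{d,k}$ both sides equal $\sigma_b^{(d,k)}a_{d,k}^i$. In each extra slot $i>I_{d,k}$ one must compute $\sqb{s_{\bm c}(b\cdot a)}_{d,k}^i$, and the key observation is that $\sigma_b^{(1,0)}=b$: since $\Z/p\Z=\F_p$, the element $b$ itself satisfies the two characterizing properties in \cref{sigma} (namely $b\equiv b^1\pmod p$ and $b^{p-1}=1$ in $\F_p$). Hence $(b\cdot a)_{1,0}^j=b\cdot a_{1,0}^j$ in $\U_1$, which gives $\abs{(b\cdot a)_{1,0}^j}=b\cdot\abs{a_{1,0}^j}$ in $\F_p$. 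Homogeneity of $P_{d,k}$ then yields $P_{d,k}(b\cdot\abs{a_{1,0}^j})=\sigma_b^{(d,k)}P_{d,k}(\abs{a_{1,0}^j})$, and factoring $\sigma_b^{(d,k)}$ out of the sum defining $\sqb{s_{\bm c}(b\cdot a)}_{d,k}^i$ recovers $\sqb{b\cdot s_{\bm c}(a)}_{d,k}^i$, as required.

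Part (iii) is the substantive claim and where essentially all the difficulty lies. I would check consistency coordinate by coordinate: for each $(d,k)\in D_p$ and $i\in[I'_{d,k}]$, show that $\paren{\sqb{s_{\bm c}(a_t)}_{d,k}^i}_{t\in[m]}\in\Phi_{d,k}(\bm L)$. The case $i\leq I_{d,k}$ is immediate from the hypothesis $(a_1,\ldots,a_m)\in\Phi_I(\bm L)$. In the remaining case $I_{d,k}<i\leq I'_{d,k}$ the coordinate in question is the $\Z/p^{k+1}\Z$-linear combination $\sum_{j=1}^{I_{1,0}}c_{d,k}^{j,i}P_{d,k}(\abs{(a_t)_{1,0}^j})$. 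Since $\Phi_{d,k}(\bm L)$ is a subgroup of $\U_{k+1}^m$ by \cref{consistency-subgroup}, it is automatically closed under multiplication by $\Z/p^{k+1}\Z$, so it suffices to prove that for each fixed $j\in[I_{1,0}]$,
\[\paren{P_{d,k}\paren{\abs{(a_t)_{1,0}^j}}}_{t\in[m]}\in\Phi_{d,k}(\bm L).\]
By hypothesis $((a_t)_{1,0}^j)_{t\in[m]}\in\Phi_{1,0}(\bm L)$, which produces a finite-dimensional $\F_p$-vector space $V$, a homogeneous degree-$1$ depth-$0$ polynomial $Q\colon V\to\U_1$, and $\bm x\in V^\ell$ with $Q(L_t(\bm x))=(a_t)_{1,0}^j$ for all $t$. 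But a homogeneous degree-$1$ polynomial into $\U_1$ is nothing but an $\F_p$-linear functional $\phi:=\abs{Q}\colon V\to\F_p$, so by $\F_p$-linearity $\abs{(a_t)_{1,0}^j}=L_t\paren{\phi(x_1),\ldots,\phi(x_\ell)}$ in $\F_p$. Taking the one-dimensional space $V':=\F_p$, the polynomial $P_{d,k}\colon\F_p\to\U_{k+1}$, and the tuple $\bm y:=(\phi(x_1),\ldots,\phi(x_\ell))\in(V')^\ell$ then witnesses the desired membership in $\Phi_{d,k}(\bm L)$. I expect this reduction---from degree-$(d,k)$ consistency back to degree-$(1,0)$ consistency via the map $P_{d,k}\circ\abs{\cdot}$---to be the only nonobvious step, and it is precisely what motivates the form of \cref{sas}.
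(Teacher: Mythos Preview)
Your overall strategy is the same as the paper's, but you have overlooked a crucial subtlety about the auxiliary polynomials $P_{d,k}$: by their construction (see the paragraph preceding \cref{sas}), $P_{d,k}$ is a \emph{univariate} homogeneous non-classical polynomial of depth $k$, and a univariate polynomial of depth $k$ can have degree at most $(k+1)(p-1)$. Thus $P_{d,k}$ does \emph{not} have degree $d$ in general; it has some degree $d'\le d$ with $d'\equiv d\pmod{p-1}$ (namely $d'=k(p-1)+i$ where $i\in\{1,\dots,p-1\}$ is the residue of $d$). This affects both (ii) and (iii).

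In part (ii), homogeneity of $P_{d,k}$ gives $P_{d,k}(bx)=\sigma_b^{(d',k)}P_{d,k}(x)$, not $\sigma_b^{(d,k)}P_{d,k}(x)$. You still need to check $\sigma_b^{(d',k)}=\sigma_b^{(d,k)}$; this does follow from the characterization in \cref{sigma} together with Fermat's little theorem (since $b^{d'}\equiv b^{d}\pmod p$), but it is not automatic from ``homogeneity'' alone.

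In part (iii) the gap is more serious. Your triple $(V',P_{d,k},\bm y)$ only witnesses membership in $\Phi_{d',k}(\bm L)$, because the definition of $\Phi_{d,k}(\bm L)$ requires the witnessing polynomial to have degree exactly $d$. The paper repairs this by picking any homogeneous $Q\colon\F_p^n\to\U_{k+1}$ of degree $d$ and depth $k$, verifying that $P_{d,k}\oplus Q$ is homogeneous of degree $d$ and depth $k$ (here one uses $\sigma_b^{(d',k)}=\sigma_b^{(d,k)}$ once more), and then evaluating at $\bm y\oplus\bm 0$ to obtain the same values $P_{d,k}(L_t(\bm y))$ from a genuine degree-$d$ witness. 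Without this extra step your argument stops at $\Phi_{d',k}(\bm L)$ and does not establish the claim.
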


\begin{proof}
Property (i) is immediate.

For the property (ii), by definition, we have \[[b\cdot s_{\bm c}(a)]^i_{d,k}=\sigma_{b}^{(d,k)}[s_{\bm c}(a)]^i_{d,k},\]where $\sigma_b^{(d,k)}$ is defined in \cref{sigma}. Now $s_{\bm c}(b\cdot a)^i_{d,k}=\sigma_b^{(d,k)}a^i_{d,k}$ if $i\leq I_{d,k}$, so we are done in this case.

Assume otherwise. Define $d'$ such that $d'\equiv d\pmod{p-1}$ and $d=k(p-1)+i$ for some $i\in\{1,\ldots,p-1\}$. Remember that $P_{d,k}$ is a homogeneous non-classical polynomial of degree $d'$ and depth $k$. Note that $\sigma_{b}^{(1,0)}=b\in\Z/p\Z$. Then we have\[[s_{\bm c}(b\cdot a)]^i_{d,k}=\sum_{j=1}^{I_{1,0}}c^{j,i}_{d,k}P_{d,k}(b|a^j_{1,0}|)=\sigma_b^{(d',k)}\sum_{j=1}^{I_{1,0}}c^{j,i}_{d,k}P_{d,k}(|a^j_{1,0}|).\]

To complete the proof of (ii) we need to show that $\sigma_b^{d,k}=\sigma_b^{d',k}$ whenever $d\equiv d'\pmod{p-1}$. This follows from \cref{sigma} which implies that $\sigma_b^{d,k}$ is uniquely determined by the facts that $\sigma_b^{d,k}\equiv b^d\pmod{p}$ and $\paren{\sigma_b^{(d,k)}}^{p-1}=1$ in $\Z/p^{k+1}\Z$. The first property does not change when $d$ changes by a multiple of $p-1$ (by Fermat's little theorem) and the second property does not depend on $d$ at all. Thus we conclude the desired result.

Now we prove (iii). We know, by \cref{counting-lemma}, that the consistency set $\Phi_{d,k}(\bm L)$ is a subgroups of $\U_{k+1}^m$. Thus it suffices to prove that for $(a_1,\ldots,a_m)\in\Phi_{1,0}(\bm L)\subseteq\U_1^m$ we have $(P_{d,k}(|a_1|),\ldots,P_{d,k}(|a_m|))\in\Phi_{d,k}(\bm L)$. Given that $(a_1,\ldots,a_m)\in\Phi_{1,0}$ we know that there exists a finite-dimensional $\F_p$-vector space $V$, a linear function $P\colon V\to\U_1$, and vectors $\bx=(x_1,\ldots,x_\ell)\in V^\ell$ such that $P(L_i(\bx))=a_i$ for $i\in[m]$. Since $P$ and $L_i$ are linear, they commute, and thus $L_i(\bm y)=|a_i|$ for all $i\in[m]$ where $\bm y\in\F_p^\ell$ is defined by $y_i=|P(x_i)|$. Finally, since $P_{d,k}$ is a homogeneous non-classical polynomial of degree $d'$ and depth $k$, we have $(P_{d,k}(|a_1|),\ldots,P_{d,k}(|a_m|))=(P_{d,k}(L_1(\bm y)),\ldots,P_{d,k}(L_m(\bm y))\in\Phi_{d',k}(\bm L)$.

To complete the proof, let $Q\colon\F_p^n\to\U_{k+1}$ be a homogeneous non-classical polynomial of degree $d$ and depth $k$. This exists by \cref{poly-exists} and \cref{homogeneous} as long as $n\geq \ceil{(d-1)/(p-1)}-k$. Then consider the map $P_{d,k}\oplus Q\colon \F_p\oplus\F_p^n\to\U_{k+1}$ defined as usual by $(P_{d,k}\oplus Q)(x\oplus y):=P_{d,k}(x)+Q(y)$. This is clearly a non-classical polynomial of degree $d$ and depth $k$. Furthermore, \[(P_{d,k}\oplus Q)(b(x\oplus y))=\sigma_b^{(d',k)}P_{d,k}(x)+\sigma_b^{(d,k)}Q(y)=\sigma_b^{(d,k)}(P_{d,k}\oplus Q)(x\oplus y),\] since $d'\equiv d\pmod{p-1}$. Considering $\bm y\oplus \bm 0\in(\F_p\oplus\F_p^n)^\ell$ shows that $(P_{d,k}(|a_1|),\ldots,P_{d,k}(|a_m|))\in\Phi_{d,k}(\bm L)$, as desired.\footnote{This argument also shows that for any pattern $\bm L$, the consistency sets $\Phi_{d,k}(\bm L)$ are nested as $d$ increases by multiples of $p-1$, though this is the only time we will need that fact in this paper.}
\end{proof}


\section{Arithmetic regularity and subatom selection}
\label{sec-regularity}

This section follows a fairly standard formula in the theory of regularity lemmas. We start with an inverse theorem, due to Tao and Ziegler \cite{TZ12}. Iterating the inverse theorem produces a weak regularity lemma (\cref{reg-1}), iterating the weak regularity lemma produces a regularity lemma (\cref{reg-2}), and iterating the regularity lemma gives a strong regularity lemma (\cref{reg-3}). Finally we use the probabilistic method applied to the output of the strong regularity lemma to produce the desired ``subatom selection'' result (\cref{subatom-selection}).

\cref{reg-1} and \cref{reg-2} are very similar to results in \cite{BFL12, BFHHL13}, differing only in some technical details. The main innovation in this section is that \cref{reg-3} is much stronger than previous results. To accomplish this, we iterate \cref{reg-2} with the complexity parameter (i.e., degree of the non-classical polynomials) increasing at each step of the iteration. To our knowledge, this idea has not appeared previously in the literature.

\textbf{Notation and conventions:} Recall that a polynomial factor $\fB$ on a vector space $V$ with parameters $I$ gives rise to a partition (or $\sigma$-algebra) on $V$ whose atoms are the fibers of the map $\fB\colon V\to A_I$. For a function $f\colon V\to\C$, we write $\E[f|\fB]\colon V\to\C$ for the projection of $f$ onto the $\sigma$-algebra generated by $\fB$. Concretely, $\E[f|\fB](x)$ is defined to be the average of $f$ over the atom of $\fB$ which contains $x$.

In this section we have to deal with many growth functions. Without loss of generality we always assume that these growth functions are monotone in all their parameters.

\begin{thm}[Inverse theorem {\cite[Theorem 1.10]{TZ12}}]
\label{inverse}
Fix a prime $p$, a positive integer $d$, and a parameter $\delta>0$. There exists $\epsilon_{inv}(p,d,\delta)>0$ such that the following holds. Let $V$ be a finite-dimensional $\F_p$-vector space. Given a function $f\colon V\to\C$ satisfying $\|f\|_\infty\leq1$ and $\|f\|_{U^{d+1}}>\delta$, there exists a non-classical polynomial $P\colon V\to\R/\Z$ of degree at most $d$ such that \[\abs{\E_{x\in V} f(x)e^{-2\pi i P(x)}}\geq \epsilon_{inv}(p,d,\delta).\]
\end{thm}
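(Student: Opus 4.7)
The plan is to prove this inverse theorem by induction on $d$, following the strategy of Gowers and its finite-field refinements due to Bergelson--Tao--Ziegler and Tao--Ziegler. For the base case $d=1$, expand $\|f\|_{U^2}^4 = \sum_\xi |\widehat f(\xi)|^4$ via Parseval. Since $\|f\|_\infty \leq 1$ gives $\sum_\xi |\widehat f(\xi)|^2 \leq 1$, the hypothesis $\|f\|_{U^2} > \delta$ forces some character $\xi$ with $|\widehat f(\xi)| \geq \delta^2$. The classical linear polynomial $P(x) = \xi\cdot x / p$ then witnesses correlation $\geq \delta^2$, so one may take $\epsilon_{inv}(p,1,\delta) = \delta^2$.

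For the inductive step, assume the inverse theorem for $U^{d}$ with some function $\epsilon_{inv}(p,d-1,\cdot)$. Using the identity
\[
\|f\|_{U^{d+1}}^{2^{d+1}} = \mathbb{E}_{h\in V} \|\Delta_h f\|_{U^d}^{2^d} > \delta^{2^{d+1}},
\]
a Markov argument gives a set of $h$ of density at least $\delta^{2^{d+1}}/2$ for which $\|\Delta_h f\|_{U^d} \geq \delta'(\delta)$. For each such $h$, the inductive hypothesis produces a non-classical polynomial $Q_h\colon V\to\mathbb{R}/\mathbb{Z}$ of degree at most $d-1$ and a bound
\[
\bigl|\mathbb{E}_{x\in V}\,\Delta_h f(x)\,e^{-2\pi i Q_h(x)}\bigr| \geq \epsilon',
\]
with $\epsilon' = \epsilon_{inv}(p,d-1,\delta')$. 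One should think of $Q_h$ as an approximate ``derivative'' of an as-yet-unknown degree-$d$ polynomial $P$.

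The heart of the proof is to integrate the family $\{Q_h\}$ into a single $P$. The strategy is first to show, by a sequence of Cauchy--Schwarz applications that exploit the multiplicativity $\Delta_{h_1+h_2}f(x) = \Delta_{h_1}f(x+h_2)\,\Delta_{h_2}f(x)$, that $Q_h$ satisfies an \emph{approximate cocycle} identity: for a positive density of $(h_1,h_2,h_3,h_4)$ with $h_1+h_2 = h_3+h_4$, the combination $Q_{h_1}+Q_{h_2}-Q_{h_3}-Q_{h_4}$ is a non-classical polynomial of strictly lower degree. One then bootstraps this weak symmetry, using additive-combinatorial tools (Balog--Szemer\'edi--Gowers on a polynomial-phase quotient group, combined with equidistribution results for polynomial factors as in \cref{equidistribution}) to obtain an \emph{exact} symmetry $Q_{h_1}+Q_{h_2} = Q_{h_1+h_2}$ (modulo lower-degree noise) on a large subset. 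From such a symmetric family one manufactures a candidate $P$ of degree at most $d$ with $\Delta_h P \equiv Q_h$. The main obstacle is precisely this symmetrization step: over $\mathbb{F}_p$ with $p$ small relative to $d$, no \emph{classical} degree-$d$ polynomial can have derivatives matching a generic symmetric family $\{Q_h\}$, and this is exactly why non-classical polynomials valued in $\mathbb{R}/\mathbb{Z}$ (rather than in $\mathbb{F}_p$) must be allowed in the conclusion.

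Finally, once $P$ is constructed, one derives the correlation bound by writing $F(x) := f(x)e^{-2\pi i P(x)}$ and showing that the hypothesis $\Delta_h P \approx Q_h$ for many $h$ forces many of the multiplicative derivatives $\Delta_h F$ to be essentially constant, which by orthogonality (i.e., $\|F\|_{U^1}^2 \leq \mathbb{E}_h |\mathbb{E}_x \Delta_h F(x)|$ and Cauchy--Schwarz) yields $|\mathbb{E}_x F(x)| \geq \epsilon_{inv}(p,d,\delta)$ for an appropriate $\epsilon_{inv}$ depending on all the quantitative losses incurred in the induction. The dependence is ineffective because the passage from the approximate cocycle to the exact one currently relies on an ultrafilter/compactness argument in the Tao--Ziegler proof; this ineffectiveness is inherited by every downstream application, as noted after \cref{main-thm}.
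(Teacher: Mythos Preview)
The paper does not prove this theorem at all: it is quoted verbatim from Tao--Ziegler \cite[Theorem~1.10]{TZ12} and used as a black box in the proof of \cref{reg-1}. So there is no ``paper's own proof'' to compare against; your proposal is an attempt to prove something the authors deliberately import from the literature.

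As a sketch of the Tao--Ziegler argument your outline is broadly faithful (base case by Fourier, inductive step via $\|f\|_{U^{d+1}}^{2^{d+1}}=\mathbb{E}_h\|\Delta_h f\|_{U^d}^{2^d}$, then the hard ``integration'' of the family $\{Q_h\}$), and you correctly flag that the symmetrization step is both the crux and the source of ineffectiveness. Two cautions, though. First, your invocation of \cref{equidistribution} is circular in this context: the equidistribution theorem for high-rank polynomial factors, as developed in \cite{HHL16} and quoted in this paper, itself sits downstream of the inverse theorem, so you cannot use it as an ingredient here. Second, the passage ``use Balog--Szemer\'edi--Gowers on a polynomial-phase quotient group \dots\ to obtain an exact symmetry'' compresses essentially the entire content of \cite{BTZ10,TZ10,TZ12} into one sentence; in low characteristic this step requires the full structure theory of non-classical polynomials and, in the existing proofs, a correspondence principle or nonstandard-analysis argument that has no finitary substitute on record. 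What you have written is a reasonable roadmap, not a proof.
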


The next lemma is important for making factors high rank and its second claim will be critical in proving the stronger regularity lemma where we need to produce a refinement (instead of a weak refinement).

\begin{lemma}[Making factors high rank {\cite{HHL16}, c.f. \cite[Theorem 2.19]{BFHHL13}}]
\label{high-rank}
Fix a prime $p$, positive integer $d, C_0$, and a non-decreasing function $r\colon\Zp\to\Zp$. There exist constants $C_{rank}(p,d, C_0,r)$ and $r_{rank}(p,d, C_0,r)$ such that the following holds. Let $V$ be a finite-dimensional $\F_p$-vector space. Suppose that $\fB$ and $\fB'$ are polynomial factors on $V$ with degree at most $d$ such that $\fB'$ refines $\fB$ and $\|\fB'\|\leq C_0$ and
\[\rank\fB\geq r_{rank}(p,d,C_0,r).\]
Then there is a polynomial factor $\fB''$ on $V$ that weakly refines $\fB'$, refines $\fB$, and satisfies $\|\fB''\|\leq C_{rank}(p,d,C_0,r)$ and $\deg\fB''\leq d$ and $\rank\fB''\geq r(\|\fB''\|)$.
\end{lemma}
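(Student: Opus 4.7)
The plan is an iterative polynomial-replacement procedure, essentially identical to standard arguments for making polynomial factors high-rank (as in \cite{HHL16,BFHHL13}); the twist is that we must keep the polynomials defining $\fB$ inviolate throughout, so that the output refines $\fB$ in the strong sense (extends its polynomial list) while still only weakly refining $\fB'$.

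Starting from $\fB^{(0)} := \fB'$, at each step $t$ I check whether $\rank \fB^{(t)} \geq r(\|\fB^{(t)}\|)$; if so, I stop and set $\fB'' := \fB^{(t)}$. Otherwise, by the definition of rank there is a non-trivial linear combination $\sum \lambda^i_{d,k} P^i_{d,k}$ of the polynomials defining $\fB^{(t)}$ whose $d'$-rank (where $d'$ is the minimum of the degrees $\deg(\lambda^i_{d,k}P^i_{d,k})$) is less than $r(\|\fB^{(t)}\|)$, i.e.\ this combination can be written as $\Gamma(Q_1,\ldots,Q_s)$ for some function $\Gamma$ and some polynomials $Q_j$ of degree strictly less than $d'$, with $s < r(\|\fB^{(t)}\|)$. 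The crucial observation is that this relation must involve at least one polynomial that is \emph{not} among the polynomials defining $\fB$: if every $\lambda$-coefficient on an $\fB$-polynomial were zero, the same relation would witness $\rank\fB < r(\|\fB^{(t)}\|)$, which we will rule out by choosing $r_{rank}$ large enough below. I pick a ``new'' polynomial $N$ occurring with non-zero coefficient and solve the relation for $N$ in terms of the other polynomials of $\fB^{(t)}$ together with $Q_1,\ldots,Q_s$. When the coefficient of $N$ is a unit in $\Z/p^{k+1}\Z$ this is straightforward; when it is a multiple of $p$, one instead obtains an expression reducing the depth of $N$, which is the standard technical variant carried out in \cite[\S 2]{HHL16}. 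The new factor $\fB^{(t+1)}$ is obtained by deleting $N$ and adjoining $Q_1,\ldots,Q_s$. Since $N$ is recoverable from the remaining polynomials, $\fB^{(t+1)}$ weakly refines $\fB^{(t)}$; since we only touched a new polynomial, $\fB^{(t+1)}$ still contains the defining polynomials of $\fB$ as a sublist, so it refines $\fB$.

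Termination and the size bound come from the following well-founded ordering. Assign to each factor $\fB^{(t)}$ the multi-set of pairs $(\deg N, \text{depth}(N))$ ranging over its ``new'' polynomials. Each replacement step removes one polynomial of some degree $d'$ and adjoins polynomials of strictly smaller degree or strictly smaller depth; hence the multi-set strictly decreases in the lex-order induced by the finite set $D_p \cap \{(d',k'):d'\leq d\}$. The number of iterations is therefore bounded in terms of $p, d, C_0$, and the function $r$, and a routine induction yields $\|\fB^{(t)}\|\leq C_{rank}(p,d,C_0,r)$ for every $t$. With this bound in hand I set $r_{rank}(p,d,C_0,r) := r(C_{rank}(p,d,C_0,r)) + p$ (the extra $+p$ is a harmless buffer for depth-reduction steps, cf.\ \cref{hyperplane-rank}); the contradiction argument above then shows each low-rank relation must involve a new polynomial, closing the loop. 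The main technical obstacle is the depth-reducing branch of the replacement step when the leading coefficient is not a unit mod $p$; this is exactly the delicate arithmetic in $\Z/p^{k+1}\Z$ that \cite{HHL16} handles, and can be imported essentially verbatim.
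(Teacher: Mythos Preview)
The paper does not give its own proof of this lemma; it is stated with attribution to \cite{HHL16} and \cite[Theorem~2.19]{BFHHL13} and then used as a black box. Your sketch is the standard rank-regularization iteration from those references, and the one extra observation you need --- that the hypothesis $\rank\fB\geq r_{rank}$ forces every low-rank linear relation in $\fB^{(t)}$ to involve at least one polynomial outside the $\fB$-list, so the replacement step never touches an $\fB$-polynomial --- is exactly what upgrades the usual ``weak refinement'' output to a genuine refinement of $\fB$. This is correct and matches the intended argument.

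Two small points worth tightening. First, the polynomials $Q_1,\ldots,Q_s$ witnessing the low $d'$-rank are merely non-classical, not homogeneous; before adjoining them to the factor you must decompose each via \cref{homogeneous}, which inflates $s$ by a factor bounded in terms of $d$ only and does not affect the termination argument. Second, for the well-founded ordering to decrease at every step you should specify that $N$ is chosen with \emph{maximal} $(d_N,k_N)$ among the non-$\fB$ polynomials appearing with nonzero coefficient; with an arbitrary choice of $N$ the multiset argument still goes through (since $\deg Q_j<d'\leq\deg(\lambda_N N)\leq d_N$), but making this explicit avoids any ambiguity in the depth-reduction branch.
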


\begin{lemma}[Weak arithmetic regularity]
\label{reg-1}
Fix a prime $p$, positive integers $d, R, C_0$, a parameter $\eta>0$, and a non-decreasing function $r\colon\Zp\to\Zp$. There exist constants $C_{reg'}(p, d, R, C_0, \eta, r)$ and $r_{reg'}(p,d,R,C_0,\eta,r)$ such that the following holds. Let $V$ be a finite-dimensional $\F_p$-vector space and let $\fB_0$ be a polynomial factor on $V$ satisfying $\|\fB_0\|\leq C_0$ and $\deg \fB_0\leq d$ and $\rank\fB_0\geq r_{reg'}(p,d,R,C_0,\eta,r)$. Given functions $f^{(1)},\ldots,f^{(R)}\colon V\to[0,1]$, there exists a polynomial factor $\fB$ on $V$ that refines $\fB_0$ with the following properties. There exists a decomposition \[f^{(\ell)}=f_{str}^{(\ell)}+f_{psr}^{(\ell)}\] for each $\ell\in[R]$ such that:
\begin{enumerate}[(i)]
    \item $f^{(\ell)}_{str}=\E[f^{(\ell)}|\fB]$ for each $\ell\in[R]$;
    \item $\|f^{(\ell)}_{psr}\|_{U^{d+1}}< \eta$ for each $\ell\in[R]$;
    \item $f_{str}^{(\ell)}$ has range $[0,1]$ and $f_{psr}^{(\ell)}$ has range $[-1,1]$ for each $\ell\in[R]$;
    \item $\rank\fB\geq r(\|\fB\|)$;
    \item $\|\fB\|\leq C_{reg'}(p, d, R, C_0, \eta, r)$  and $\deg\fB \leq  d$.
\end{enumerate}
\end{lemma}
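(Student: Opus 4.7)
The plan is a standard energy-increment iteration of the inverse theorem (\cref{inverse}) combined with the high-rank lemma (\cref{high-rank}). Initialize $\fB \leftarrow \fB_0$. At each stage, set $f^{(\ell)}_{str} := \E[f^{(\ell)}\mid\fB]$ and $f^{(\ell)}_{psr} := f^{(\ell)} - f^{(\ell)}_{str}$. If $\|f^{(\ell)}_{psr}\|_{U^{d+1}} < \eta$ for every $\ell \in [R]$, halt and output $\fB$ together with this decomposition; property (iii) is automatic because the conditional expectation of a $[0,1]$-valued function is $[0,1]$-valued, which forces $f^{(\ell)}_{psr} \in [-1,1]$. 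Otherwise fix some $\ell$ for which the uniformity condition fails.

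Apply \cref{inverse} to $f^{(\ell)}_{psr}$ to obtain a non-classical polynomial $P\colon V \to \R/\Z$ of degree at most $d$ with
\[
\Bigl|\E_{x\in V} f^{(\ell)}_{psr}(x)\, e^{-2\pi i P(x)}\Bigr| \geq \epsilon_{inv}(p,d,\eta).
\]
Decompose $P$ via \cref{homogeneous} into homogeneous non-classical polynomials of degree at most $d$ and depth at most $\floor{(d-1)/(p-1)}$, and append these homogeneous components to the list defining $\fB$, producing a factor $\fB^+$ that refines $\fB$ and whose size is controlled by $\|\fB\|$ times a constant depending only on $p$ and $d$. Now invoke \cref{high-rank} on the pair $(\fB,\fB^+)$: provided $\rank\fB \geq r_{rank}(p,d,\|\fB^+\|,r)$, this yields $\fB'$ of degree at most $d$, size at most $C_{rank}(p,d,\|\fB^+\|,r)$, rank at least $r(\|\fB'\|)$, and, crucially, $\fB'$ weakly refines $\fB^+$ while refining $\fB$ (and hence $\fB_0$). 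Replace $\fB$ by $\fB'$ and iterate.

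Because $\fB'$ weakly refines $\fB^+$, the polynomial $P$ is $\fB'$-measurable, so a routine Cauchy--Schwarz/Pythagoras computation converts the correlation delivered by \cref{inverse} into the energy increment
\[
\bigl\|\E[f^{(\ell)}\mid\fB']\bigr\|_2^2 \;\geq\; \bigl\|\E[f^{(\ell)}\mid\fB]\bigr\|_2^2 \;+\; \epsilon_{inv}(p,d,\eta)^2.
\]
Summing over $\ell \in [R]$, the total energy is monotone increasing and bounded above by $R$, so the loop halts after at most $T := R\,\epsilon_{inv}(p,d,\eta)^{-2}$ iterations. The output constants arise by composing the bounds from \cref{high-rank} through these $T$ steps: $C_{reg'}$ is obtained by iterating $C_{rank}$ a total of $T$ times starting from $C_0$, and $r_{reg'}$ is obtained by iterating $r_{rank}$ \emph{backwards} from this final size bound so that at every intermediate stage the rank is large enough to legally feed into the next application of \cref{high-rank}. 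The main subtlety, and the reason one cannot simply greedily add the polynomial $P$ and proceed, is the need to maintain a genuine refinement of $\fB_0$ at every stage rather than merely a weak refinement; this is precisely what the dual ``weakly refines $\fB'$, refines $\fB$'' clause of \cref{high-rank} is engineered to preserve, and it is what allows the iteration to be chained while the original refinement relation with $\fB_0$ persists.
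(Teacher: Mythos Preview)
Your proposal is correct and follows essentially the same approach as the paper: an energy-increment loop driven by \cref{inverse}, homogenization of the witness polynomial via \cref{homogeneous}, and the use of \cref{high-rank} at each step with a backwards-iterated sequence of rank functions so that the refinement of $\fB_0$ is genuinely preserved. The paper carries out exactly this scheme, making explicit the bound $\|\fB^+\|\leq p^{d^3}\|\fB\|$ and the Pythagorean chain $\|\E[f^{(\ell)}\mid\fB']\|_2^2-\|\E[f^{(\ell)}\mid\fB]\|_2^2\geq\|\E[g\mid\fB^+]\|_2^2\geq|\langle g,e^{2\pi iP}\rangle|^2$, but there is no substantive difference in strategy.
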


\begin{proof}
Set $M:=\ceil{R\epsilon_{inv}(p,d,\eta)^{-2}}$. Define non-decreasing functions $r_i\colon\Zp\to\Zp$ for each $i=0,\ldots,M$ such that $r_0=r$ and $r_{i+1}(N)\geq r_{rank}(p,d,p^{d^3}N,r_i)$ and such that $r_{i+1}(N)\geq r_i(N)$ for all $i=0,\ldots,M-1$ and all $N\in\Zp$. Define $r_{reg'}(p,d,R,C_0,\eta,r):=r_M(C_0)$.

We construct a list of polynomial factors $\fB_0,\fB_1,\ldots,\fB_m$ on $V$ such that
\begin{itemize}
    \item $\fB_i$ refines $\fB_{i-1}$ for $i=1,\ldots,m$;
    \item $\rank \fB_i\geq r_{M-i}(\|\fB_i\|)$ for $i=0,\ldots,m$;
    \item $\|\fB_i\|\leq C_{rank}(p,d,p^{d^3}\|\fB_{i-1}\|,r_{M-i})$ and $\deg\fB_i\leq d$ for $i=1,\ldots, m$.
\end{itemize}
Suppose we have constructed polynomial factors $\fB_0,\ldots,\fB_i$ with the above properties. If $\|f^{(\ell)}-\E[f^{(\ell)}|\fB_i]\|_{U^{d+1}}<\eta$ for each $\ell\in[R]$ we halt the iteration. Otherwise there is some $\ell\in[R]$ such that, writing $g:=f^{(\ell)}-\E[f^{(\ell)}|\fB_i]$, we have
\[\|g\|_{U^{d+1}}\geq\eta.\]
By \cref{inverse}, there exists a non-classical polynomial $P\colon V\to\R/\Z$ of degree at most $d$ such that
\[\abs{\E_{x\in V}g(x)e^{-2\pi i P(x)}}\geq\epsilon_{inv}(p,d,\eta).\]
By \cref{homogeneous}, we can write $P=P_1+\cdots +P_C$ as the sum of homogeneous non-classical polynomials. There are at most $\sum_{i=1}^d1+\floor{(i-1)/(p-1)}\leq d^2$ terms in this sum. Let $\fB'_i$ to be the polynomial factor defined by the polynomials defining $\fB$ as well as the polynomials $P_1,\ldots,P_C$. Note that $\|\fB'_i\|\leq p^{d^3}\|\fB_i\|$. Finally let $\fB_{i+1}$ be the polynomial factor produced by applying \cref{high-rank} to $\fB_i$ and $\fB_i'$ with parameters $p,d,r_{M-i-1}$. In particular $\fB_i'$ refines $\fB_i$ and
\begin{align*}
\rank \fB_i
& \geq r_{M-i}(\|\fB_i\|)\\
&\geq r_{rank}(p,d,p^{d^3}\|\fB_i\|,r_{M-i-1})\\
&\geq r_{rank}(p,d,\|\fB_i'\|,r_{M-i-1}),
\end{align*}
so the hypotheses of \cref{high-rank} are satisfied. Thus we have defined $\fB_{i+1}$ with all the desired properties.

Next we claim that this iteration must stop after at most $M$ steps. We claim that
\[\sum_{\ell=1}^R\norm{\E[f^{(\ell)}|\fB_i]}_2^2\]
increases by at least $\epsilon_{inv}(p,d,\eta)^2$ each time $i$ increases. Since this sum is clearly bounded between 0 and $R$, it suffices to prove this claim.

First note that by the Cauchy-Schwarz inequality, $\|\E[f^{(\ell)}|\fB_{i+1}]\|_2^2\geq\|\E[f^{(\ell)}|\fB_i]\|_2^2$ holds for all $\ell$. Now pick $\ell\in[R]$ as in the $i$th iteration, define $g:=f^{(\ell)}-\E[f^{(\ell)}|\fB_i]$, and let $P$ be the non-classical polynomial defined in the $i$th iteration. Note in particular that $e^{-2\pi i P(x)}$ is in the $\sigma$-algebra defined by $\fB_i'$. Then we compute
\begin{align*}
\|\E[f^{(\ell)}|\fB_{i+1}]\|_2^2-\|\E[f^{(\ell)}|\fB_i]\|_2^2
 & \geq \|\E[f^{(\ell)}|\fB_i']\|_2^2-\|\E[f^{(\ell)}|\fB_i]\|_2^2 \\
 & = \|\E[f^{(\ell)}|\fB_i']-\E[f^{(\ell)}|\fB_i]\|_2^2 \\
 & =\|\E[g|\fB_i']\|_2^2 \\
 & \geq \ang{\E[g|\fB_i'],e^{2\pi i P}}^2 \\
 & = \ang{g,e^{2\pi i P}}^2 \\
 & \geq \epsilon_{inv}(p,d,\eta)^2.
\end{align*}

Thus we have produced $\fB_m$ with $m\leq M$ such that $\fB_m$ refines $\fB_0$ and $\rank \fB_m\geq r_{M-m}(\|\fB_m\|)\geq r(\|\fB_m\|)$ and $\|f^{(\ell)}-\E[f^{(\ell)}|\fB_m]\|_{U^{d+1}}<\eta$ for each $\ell\in[R]$. Defining $f^{(\ell)}_{str}:=\E[f^{(\ell)}|\fB_m]$ and $f^{(\ell)}_{psr}:=f^{(\ell)}-f^{(\ell)}_{str}$, we immediately see that conclusions (i), (ii), (iii), and (iv) hold. Conclusion (v) holds by defining $C_{reg'}(p,d,R,C_0,\eta,r)$ to be the $M$-fold iteration of the function $N\mapsto C_{rank}(p,d,p^{d^3}N,r_M)$ applied to $C_0$.
\end{proof}

\begin{lemma}[Arithmetic regularity]
\label{reg-2}
Fix a prime $p$, positive integers $d, R, C_0$, a parameter $\theta>0$, a non-increasing function $\eta \colon\Zp\to(0,1)$, and a non-decreasing function $r\colon\Zp\to\Zp$. There exist constants $C_{reg''}(p, d, R, C_0, \theta, \eta, r)$ and $r_{reg''}(p,d,R,C_0,\theta,\eta,r)$ such that the following holds. Let $V$ be a finite-dimensional $\F_p$-vector space and let $\fB_0$ be a polynomial factor on $V$ satisfying $\|\fB_0\|\leq C_0$ and $\deg\fB_0\leq d$ and $\rank \fB_0\geq r_{reg''}(p,d,r,I_0,\theta,\eta,r)$. Given functions $f^{(1)},\ldots,f^{(R)}\colon V\to[0,1]$, there exists a polynomial factor $\fB$ on $V$ that refines $\fB_0$ with the following properties. There exists a decomposition \[f^{(\ell)}=f_{str}^{(\ell)}+f_{psr}^{(\ell)}+f_{sml}^{(\ell)}\] for each $\ell\in[R]$ such that:
\begin{enumerate}[(i)]
    \item $f^{(\ell)}_{str}=\E[f^{(\ell)}|\fB]$ for each $\ell\in[R]$;
    \item $\|f^{(\ell)}_{psr}\|_{U^{d+1}}< \eta(\|\fB\|)$ for each $\ell\in[R]$;
    \item $f_{str}^{(\ell)}$ and $f_{str}^{(\ell)}+f_{sml}^{(\ell)}$ have range $[0,1]$ and $f_{psr}^{(\ell)}$ and $f_{sml}^{(\ell)}$ have range $[-1,1]$ for each $\ell\in[R]$;
    \item $\rank\fB\geq r(\|\fB\|)$;
    \item $\|f^{(\ell)}_{sml}\|_2<\theta$ for each $\ell\in[R]$;
    \item $\|\fB\|\leq C_{reg''}(p, d, R, C_0, \theta, \eta, r)$  and $\deg\fB \leq  d$.
\end{enumerate}
\end{lemma}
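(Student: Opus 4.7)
The approach is to iterate the weak arithmetic regularity lemma (\cref{reg-1}) with a uniformity parameter that shrinks with the current factor size, and to terminate via an energy-increment argument that absorbs the failure of uniformity into an $L^2$-small error term $f^{(\ell)}_{sml}$.

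Set $M := \lceil R/\theta^2 \rceil$. We will build polynomial factors $\fB_0 \leq \fB_1 \leq \cdots \leq \fB_{m+1}$ on $V$, each of degree at most $d$, with $\fB_{i+1}$ refining $\fB_i$, and eventually take $\fB := \fB_m$ for some $m \leq M$. Inductively, assume $\fB_i$ has been constructed with $\|\fB_i\| \leq C_i$ (where the constants $C_i$ are defined below) and with rank at least $r_i(\|\fB_i\|)$ for some non-decreasing function $r_i$. Apply \cref{reg-1} to $\fB_i$ with uniformity parameter $\eta(C_i)$, rank function $r_{i+1}$, and starting size bound $C_i$ to produce a refinement $\fB_{i+1}$ refining $\fB_i$, together with decompositions $f^{(\ell)} = \E[f^{(\ell)}|\fB_{i+1}] + g^{(\ell)}_i$ with $\|g^{(\ell)}_i\|_{U^{d+1}} < \eta(C_i) \leq \eta(\|\fB_i\|)$. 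We stop at the first index $i = m$ such that, for every $\ell \in [R]$, $\|\E[f^{(\ell)}|\fB_{m+1}]\|_2^2 - \|\E[f^{(\ell)}|\fB_m]\|_2^2 < \theta^2$, and set
\begin{equation*}
f^{(\ell)}_{str} := \E[f^{(\ell)}|\fB_m], \qquad
f^{(\ell)}_{sml} := \E[f^{(\ell)}|\fB_{m+1}] - \E[f^{(\ell)}|\fB_m], \qquad
f^{(\ell)}_{psr} := g^{(\ell)}_m.
\end{equation*}

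The process terminates in at most $M$ steps: otherwise some $\ell$ has $L^2$ energy increasing by $\geq \theta^2$ at each step, so $\sum_\ell \|\E[f^{(\ell)}|\fB_i]\|_2^2$ grows by at least $\theta^2$ per step, contradicting the upper bound $R$ after $R/\theta^2$ steps. Properties (i) and (ii) hold by construction and by the bound on $\|g^{(\ell)}_m\|_{U^{d+1}}$. For (iii), $f^{(\ell)}_{str}$ and $f^{(\ell)}_{str}+f^{(\ell)}_{sml}$ are conditional expectations of $[0,1]$-valued functions hence $[0,1]$-valued, and $f^{(\ell)}_{psr}, f^{(\ell)}_{sml}$ are differences of $[0,1]$-valued functions hence $[-1,1]$-valued. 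Property (iv) follows from the rank guarantee of \cref{reg-1}. For (v), orthogonality of conditional expectations with respect to nested $\sigma$-algebras gives $\|f^{(\ell)}_{sml}\|_2^2 = \|\E[f^{(\ell)}|\fB_{m+1}]\|_2^2 - \|\E[f^{(\ell)}|\fB_m]\|_2^2 < \theta^2$, which is the halting condition.

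The main obstacle is the parameter bookkeeping: one must choose the rank functions $r_0, r_1, \ldots, r_M$ and size bounds $C_0, C_1, \ldots, C_M$ backwards so that the hypothesis of \cref{reg-1} is met at each step. Concretely, set $r_M := r$; then for $i$ decreasing from $M-1$ to $0$, define $C_{i+1} := C_{reg'}(p, d, R, C_i, \eta(C_i), r_{i+1})$ and choose $r_i$ to dominate $N \mapsto r_{reg'}(p, d, R, N, \eta(N), r_{i+1})$ (and the previous $r_{i+1}$). Then set $C_{reg''} := \max_i C_i$ and $r_{reg''} := r_0(C_0)$, which is valid because the input $\fB_0$ has $\|\fB_0\| \leq C_0$ and rank at least $r_{reg''}$. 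This is the same style of backward iteration used to prove \cref{reg-1} from \cref{high-rank,inverse}, and presents no conceptual difficulty beyond careful chasing of the growth functions.
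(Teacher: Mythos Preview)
Your proposal is correct and follows essentially the same approach as the paper: iterate \cref{reg-1} with shrinking uniformity parameter, halt via the energy-increment bound $\sum_\ell \|\E[f^{(\ell)}|\fB_i]\|_2^2 \leq R$, and read off the three-term decomposition at the halting step. The only differences from the paper are cosmetic: the paper indexes the rank functions in the opposite direction ($r_0 = r$, $r_M$ the initial requirement), applies \cref{reg-1} with $\eta(\|\fB_i\|)$ rather than $\eta(C_i)$, and labels the output factor $\fB_{m-1}$ rather than $\fB_m$; none of this affects the argument.
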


\begin{proof}
Set $M:=\ceil{R\theta^{-2}}$. Define non-decreasing functions $r_i\colon\Zp\to\Zp$ for each $i=0,\ldots,M$ such that $r_0=r$ and $r_{i+1}(N)\geq r_{reg'}(p,d,R,N,\eta(N),r_i)$ and such that $r_{i+1}(N)\geq r_i(N)$ for all $i=0,\ldots,M-1$ and all $N\in\Zp$. Define $r_{reg''}(p,d,R,C_0,\theta,\eta,r):=r_M(C_0)$.

We construct a list of polynomial factors $\fB_0,\fB_1,\ldots,\fB_m$ on $V$ such that
\begin{itemize}
    \item $\fB_i$ refines $\fB_{i-1}$ for $i=1,\ldots,m$;
    \item $\rank \fB_i\geq r_{M-i}(\|\fB_i\|)$ for $i=0,\ldots,m$;
    \item $\|\fB_i\|\leq C_{reg'}(p,d,R,\|\fB_{i-1}\|,\eta(\|\fB_{i-1}\|),r_{M-i})$ and $\deg\fB_i\leq d$ for $i=1,\ldots, m$.
\end{itemize}
Suppose we have constructed polynomial factors $\fB_0,\ldots,\fB_i$ with the above properties. If $i\geq 1$ and $\|E[f^{(\ell)}|\fB_i]\|_2^2-\|\E[f^{(\ell)}|\fB_{i-1}]\|_2^2<\theta^2$ for each $\ell\in[R]$ we halt the iteration. Otherwise let $\fB_{i+1}$ be the polynomial factor produced by applying \cref{reg-1} to $\fB_i$ with parameters $p,d,R,\|\fB_i\|,\eta(\|\fB_i\|),r_{M-i-1}$. Note that
\begin{align*}
\rank \fB_i 
& \geq r_{M-i}(\|\fB_i\|) \\
& \geq r_{reg'}(p,d,R,\|\fB_i\|,\eta(\|\fB_i\|),r_{M-i-1}),
\end{align*}
so the hypotheses of \cref{reg-1} are satisfied.

Next we claim that this iteration must stop after at most $M$ steps. This is obvious since
\[\sum_{\ell=1}^R\norm{\E[f^{(\ell)}|\fB_i]}_2^2-\sum_{\ell=1}^R\norm{\E[f^{(\ell)}|\fB_{i-1}]}_2^2\geq\theta^2\] for $i=1,\ldots,m$ and the sum is bounded between 0 and $R$.

Thus we have produced $\fB_{m-1}$ with $m\leq M$ such that $\fB_{m-1}$ refines $\fB_0$ and $\rank \fB_{m-1}\geq r_{M-m+1}(\|\fB_{m-1}\|)\geq r(\|\fB_{m-1}\|)$ and $\|f^{(\ell)}-\E[f^{(\ell)}|\fB_m]\|_{U^{d+1}}<\eta(\|\fB_{m-1}\|)$ for each $\ell\in[R]$ and $\norm{\E[f^{(\ell)}|\fB_m]-\E[f^{(\ell)}|\fB_{m-1}]}_2<\theta$ for each $\ell\in[R]$. Defining $f^{(\ell)}_{str}:=\E[f^{(\ell)}|\fB_{m-1}]$ and $f^{(\ell)}_{psr}:=f^{(\ell)}-E[f^{(\ell)}|\fB_m]$ and $f^{(\ell)}_{sml}:=\E[f^{(\ell)}|\fB_m]-\E[f^{(\ell)}|\fB_{m-1}]$, we immediately see that conclusions (i), (ii), (iii), (iv), and (v) hold. Conclusion (vi) holds by defining $C_{reg''}(p,d,R,C_0,\theta,\eta,r)$ to be the $M$-fold iteration of the function $N\mapsto C_{reg'}(p,d,R,N,\eta(N),r_M)$ applied to $C_0$.
\end{proof}

\begin{lemma}[Strong arithmetic regularity]
\label{reg-3}
Fix a prime $p$, positive integers $R,C_0,d_0$, a parameter $\zeta>0$, non-increasing functions $\eta, \theta\colon\Zp\times\Zp\to(0,1)$, and non-decreasing functions $d, r\colon\Zp\times\Zp\to\Zp$. There exist constants $C_{reg'''}(p, R, C_0,d_0, \zeta, \eta, \theta, d, r)$ and $D_{reg'''}(p, R, C_0,d_0, \zeta, \eta, \theta, d, r)$  and $r_{reg'''}(p,R,C_0,d_0,\zeta,\eta,\theta,d,r)$ such that the following holds. Let $V$ be a finite-dimensional $\F_p$-vector space and let $\fB_0$ be a polynomial factor on $V$ satisfying $\|\fB_0\|\leq C_0$ and $\deg \fB_0\leq d_0$ and $\rank \fB_0\geq r_{reg'''}(p,R,C_0,d_0,\zeta,\eta,\theta,d,r)$. Given functions $f^{(1)},\ldots,f^{(R)}\colon V\to[0,1]$, there exist a polynomial factor $\fB$ and a refinement $\fB'$ both on $V$ with parameters $I$ and $I'$ with the following properties. There exists a decomposition \[f^{(\ell)}=f_{str}^{(\ell)}+f_{psr}^{(\ell)}+f_{sml}^{(\ell)}\] for each $\ell\in[R]$ such that:
\begin{enumerate}[(i)]
    \item $f^{(\ell)}_{str}=\E[f^{(\ell)}|\fB']$ for each $\ell\in[R]$;
    \item $\|f^{(\ell)}_{psr}\|_{U^{d(\deg\fB,\|\fB\|)+1}}< \eta(\deg\fB',\|\fB'\|)$ for each $\ell\in[R]$;
    \item $f_{str}^{(\ell)}$ and $f_{str}^{(\ell)}+f_{sml}^{(\ell)}$ have range $[0,1]$ and $f_{psr}^{(\ell)}$ and $f_{sml}^{(\ell)}$ have range $[-1,1]$ for each $\ell\in[R]$;
    \item $\rank\fB \geq r(\deg\fB,\|\fB\|)$ and $\rank \fB'\geq r(\deg\fB',\|\fB'\|)$;
    \item $\|f^{(\ell)}_{sml}\|_2<\theta(\deg\fB, \|\fB\|)$ for each $\ell\in[R]$;
    \item for all but at most a $\zeta$-fraction of $a\in A_{I'}$ it holds that \[\left|\E_{x\in \fB'^{-1}(a)}[f^{(\ell)}(x)]-\E_{x\in\fB^{-1}(\pi(a))}[f^{(\ell)}(x)]\right|<\zeta\]for each $\ell\in[R]$ (recall the definition of the atom indexing sets from \cref{A} and the projection map $\pi\colon A_{I'}\to A_I$ from \cref{pi});
    \item $\|\fB'\|\leq C_{reg'''}(p, R, C_0,d_0,\zeta, \eta, \theta, d, r)$ and $\deg\fB' \leq  D_{reg'''}(p, R,C_0,d_0,\zeta, \eta, \theta, d, r)$.
\end{enumerate}
\end{lemma}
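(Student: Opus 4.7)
The plan is to prove \cref{reg-3} by iterating \cref{reg-2} in the style of Alon and Shapira, where the crucial new ingredient is that we allow the degree parameter governing the Gowers uniformity to \emph{grow} along the iteration. This is what permits the removal of the bounded-complexity restriction.

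The iteration proceeds as follows. Set $M := \lceil R\zeta^{-3}\rceil$. We will construct a chain of polynomial factors $\fB_0 \preceq \fB_1 \preceq \fB_2 \preceq \cdots \preceq \fB_m$, each refining the previous. Having constructed $\fB_i$ (with $\deg\fB_i \leq D_i$ and $\|\fB_i\| \leq C_i$ for some quantities $D_i$ and $C_i$ that will be recorded), set $D_{i+1} := d(D_i, C_i)$, and apply \cref{reg-2} with input factor $\fB_i$, degree parameter $D_{i+1}$, error parameter $\theta(D_i, C_i)$, uniformity function $N \mapsto \eta(D_{i+1}, N)$, and rank function $N \mapsto r(D_{i+1}, N)$. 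This produces a refinement $\fB_{i+1}$ of $\fB_i$ together with a decomposition $f^{(\ell)} = f_{str,i+1}^{(\ell)} + f_{psr,i+1}^{(\ell)} + f_{sml,i+1}^{(\ell)}$ satisfying the conclusions of \cref{reg-2}. We terminate at the first step $i \geq 1$ at which condition (vi) of \cref{reg-3} holds for the pair $(\fB, \fB') := (\fB_{i-1}, \fB_i)$, and output that pair along with the decomposition coming from the $i$th application of \cref{reg-2}.

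Conclusions (i)--(v) and (vii) will follow directly from the corresponding conclusions of \cref{reg-2} at the terminating step, together with the choice of parameters. The key point is to bound the number of iterations. If condition (vi) fails at step $i$, then there exists some $\ell \in [R]$ such that on a set of atoms of $\fB_i$ of total $\fB_{i-1}$-conditional measure at least $\zeta$, the averages of $f^{(\ell)}$ on $\fB_i$-atoms differ from those on the containing $\fB_{i-1}$-atoms by more than $\zeta$. Writing $g := \E[f^{(\ell)}|\fB_i] - \E[f^{(\ell)}|\fB_{i-1}]$ and using that $g$ is $\fB_i$-measurable and orthogonal to $\fB_{i-1}$-measurable functions, a standard calculation gives
\[\sum_{\ell=1}^R \norm{\E[f^{(\ell)}|\fB_i]}_2^2 - \sum_{\ell=1}^R \norm{\E[f^{(\ell)}|\fB_{i-1}]}_2^2 \geq \|g\|_2^2 \geq \zeta^3.\]
Since the energy is bounded in $[0,R]$, the process halts after at most $M$ steps.

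The main obstacle is the bookkeeping of the rank parameters, and this is where the growing-degree idea requires extra care compared to \cref{reg-2}. Because the degree $D_{i+1}$ used at step $i+1$ can exceed $D_i$, we cannot fix a single degree up front when defining the auxiliary rank functions. Instead we work backwards from the terminal step: we define a decreasing chain of non-decreasing rank functions $r^{(M)}, r^{(M-1)}, \ldots, r^{(0)}$ by $r^{(M)}(N) := r(D_M, N)$ where $D_M$ is the (bounded) maximal degree that can arise, and $r^{(i)}(N) \geq r_{reg''}(p, D_{i+1}, R, N, \theta(D_i, C_i), \eta(D_{i+1}, \cdot), r^{(i+1)})$, and set $r_{reg'''} := r^{(0)}(C_0)$. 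The monotonicity of $d$ in both arguments ensures that the sequence $D_0, D_1, \ldots$ is determined and bounded by some $D_{reg'''}$, and the $M$-fold composition of the size bound from \cref{reg-2} defines $C_{reg'''}$. With these choices, each step of the iteration has sufficient rank and the degree and size of the output factor $\fB'$ are bounded as required in (vii).
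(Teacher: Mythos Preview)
Your overall strategy coincides with the paper's: iterate \cref{reg-2} with the degree parameter growing at each step, bound the number of iterations by an energy-increment argument with threshold $\zeta^3$, and output the last two factors in the chain together with the decomposition from the final application. Your halting criterion (condition (vi) holds) is the contrapositive of the paper's (energy increment below $\zeta^3$ for every $\ell$); both give the same bound $M=\lceil R\zeta^{-3}\rceil$.

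There is, however, a genuine circularity in your rank bookkeeping. You define $r^{(i)}(N)$ in terms of the precomputed trajectory bounds $D_i,C_i,D_{i+1}$, and then say these bounds are ``determined''. But $C_{i+1}=C_{reg''}(p,D_{i+1},R,C_i,\theta(D_i,C_i),\eta(D_{i+1},\cdot),r^{(i+1)})$ depends on the rank function $r^{(i+1)}$ passed in, while your $r^{(i+1)}$ depends on $D_{i+2}=d(D_{i+1},C_{i+1})$. Since $D_M$ itself depends on the whole sequence $C_0,\ldots,C_{M-1}$, starting the backward recursion at $r^{(M)}(N)=r(D_M,N)$ does not break the loop; monotonicity of $d$ alone does not determine the $D_i$.

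The paper resolves this by making the auxiliary rank functions depend on \emph{both} coordinates rather than on a precomputed trajectory: set $r_0=r$ and
\[
r_{i+1}(D,N)\geq r_{reg''}\bigl(p,\,d(D,N),\,R,\,N,\,\theta(D,N),\,\eta(d(D,N),\cdot),\,r_i(d(D,N),\cdot)\bigr).
\]
Each $r_i$ is then a fixed function of $(D,N)$ built purely from the input data. At step $i$ one applies \cref{reg-2} to $\fB_i$ with degree $d(\deg\fB_i,\|\fB_i\|)$ and rank function $N\mapsto r_{M-i-1}(d(\deg\fB_i,\|\fB_i\|),N)$; the hypothesis $\rank\fB_i\geq r_{M-i}(\deg\fB_i,\|\fB_i\|)$ propagates inductively, and the final size and degree bounds fall out at the end by composing the one-step bounds $M$ times. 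Rewriting your bookkeeping in this form removes the circularity.

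One smaller point: the implication ``(vi) fails $\Rightarrow$ $\|g\|_2^2\geq\zeta^3$'' silently converts a $\zeta$-fraction of \emph{atoms} in $A_{I'}$ into at least (roughly) a $\zeta$-fraction of the \emph{measure} of $V$. This requires that atoms of $\fB'$ are nearly equidistributed, i.e.\ $\rank\fB'\geq r_{equi}(p,\deg\fB',1/(2\|\fB'\|))$; fold this lower bound into the base rank function. (The paper's one-line Markov argument for (vi) relies on the same implicit step.)
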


\begin{proof}
Set $M:=\ceil{R\zeta^{-3}}$. Define non-decreasing functions $r_i\colon\Zp\times\Zp \to\Zp$ for each $i=0,\ldots,M$ such that $r_0=r$ and $r_{i+1}(D,N)\geq r_{reg''}(p,d(D,N),R,N,\theta(D,N),\eta(d(D,N),\cdot),r_i(d(D,N),\cdot))$ and such that $r_{i+1}(D,N)\geq r_i(D,N)$ for all $i=0,\ldots,M-1$ and all $D,N\in\Zp$. Define $r_{reg'''}(p,R,C_0,d_0,\zeta,\eta,\theta,d,r):=r_M(d_0,C_0)$.

We construct a list of polynomial factors $\fB_0,\fB_1,\ldots,\fB_m$ on $V$ such that
\begin{itemize}
    \item $\fB_i$ refines $\fB_{i-1}$ for $i=1,\ldots,m$;
    \item $\rank \fB_i\geq r_{M-i}(\deg\fB_i,\|\fB_i\|)$ for $i=0,\ldots,m$;
    \item $\|\fB_i\|\leq C_{reg''}(p,d(\deg\fB_{i-1},\|\fB_{i-1}\|),R,\|\fB_{i-1}\|,\theta,\eta,r_{M-i}(d(\deg\fB_{i-1},\|\fB_{i-1}\|),\cdot))$ and $\deg\fB_i\leq d(\deg\fB_{i-1},\|\fB_{i-1}\|)$ for $i=1,\ldots, m$.
\end{itemize}
Suppose we have constructed polynomial factors $\fB_0,\ldots,\fB_i$ with the above properties. If $i\geq 1$ and $\|E[f^{(\ell)}|\fB_i]\|_2^2-\|\E[f^{(\ell)}|\fB_{i-1}]\|_2^2<\zeta^3$ for each $\ell\in[R]$ we halt the iteration. Otherwise let $\fB_{i+1}$ be the polynomial factor produced by applying \cref{reg-2} to $\fB_i$ with parameters $p,d(\deg\fB_i,\|\fB_i\|),R,\|\fB_i\|,\theta(\deg\fB_i,\|\fB_i\|),\eta(d(\deg\fB_i,\|\fB_i\|),\cdot),r_{M-i-1}(d(\deg\fB_i,\|\fB_i\|),\cdot)$. Note that
\begin{align*}
\rank \fB_i 
& \geq r_{M-i}(\deg\fB_i,\|\fB_i\|) \\
& r_{reg''}(p,d(\deg\fB_i,\|\fB_i\|),R,\|\fB_i\|,\theta(\deg\fB_i,\|\fB_i\|),\eta(d(\deg\fB_i,\|\fB_i\|),\cdot),r_{M-i-1}(d(\deg\fB_i,\|\fB_i\|),\cdot)),
\end{align*}
so the hypotheses of \cref{reg-2} are satisfied.

Next we claim that this iteration must stop after at most $M$ steps. This is obvious since
\[\sum_{\ell=1}^R\norm{\E[f^{(\ell)}|\fB_i]}_2^2-\sum_{\ell=1}^R\norm{\E[f^{(\ell)}|\fB_{i-1}]}_2^2\geq\zeta^3\] for $i=1,\ldots,m$ and the sum is bounded between 0 and $R$.

Thus we have produced $\fB_{m-1}$ and $\fB_m$ with $m\leq M$ such that $\fB_{m-1}$ refines $\fB_0$ and $\fB_m$ refines $\fB_{m-1}$. Furthermore, $\rank \fB_{m-1}\geq r_{M-m+1}(\deg\fB_{m-1},\|\fB_{m-1}\|)\geq r(\deg\fB_{m-1},\|\fB_{m-1}\|)$ and $\rank \fB_m\geq r_{M-m}(\deg\fB_m,\|\fB_m\|)\geq r(\deg\fB_m,\|\fB_m\|)$. Also $\norm{\E[f^{(\ell)}|\fB_m]-\E[f^{(\ell)}|\fB_{m-1}]}_2^2<\zeta^3$ for each $\ell\in[R]$. Let $f^{(\ell)}=f_{str}^{(\ell)}+f_{psr}^{(\ell)}+f_{sml}^{(\ell)}$ be the decomposition produced by the last application of \cref{reg-2}. This decomposition satisfies conclusions (i), (ii), (iii), and (v). Conclusion (iv) we already verified, and conclusion (vi) follows from Markov's inequality applied to the bound $\norm{\E[f^{(\ell)}|\fB_m]-\E[f^{(\ell)}|\fB_{m-1}]}_2^2<\zeta^3$. Finally conclusion (vii) holds where we define the pair $( D_{reg'''}(p, R,C_0,d_0,\zeta, \eta, \theta, d, r), C_{reg'''}(p, R, C_0,d_0,\zeta, \eta, \theta, d, r))$ to be the $M$-fold iteration of the function $(D,N)\mapsto  (C_{reg''}(p,d(D,N),R,N,\theta,\eta,r_M(d(D,N),\cdot)),d(D,N))$ applied to $(d_0,C_0)$.
\end{proof}

Recall that for a parameter list $I\in\cI_p$ and an atom $a\in A_I$, we write $a_{d,k}\in\U_{k+1}^{I_{d,k}}$ to be the degree $d$, depth $k$ part of $a$. In the next theorem we will choose a subatom selection function $s\colon A_I\to A_{I'}$  (recall \cref{sas}) such that $s(a)$ is regular for all $a\in A_I$ except those with $a_{1,0}=0$.

\begin{thm}[Subatom selection]
\label{subatom-selection}
Fix a prime $p$, positive integers $R,c_0$, a parameter $\zeta>0$, non-increasing functions $\eta, \theta\colon\Zp\times\Zp\to(0,1)$, and non-decreasing functions $d, r\colon\Zp\times\Zp\to\Zp$. There exist constants $C_{reg}(p, R, c_0,\zeta, \eta, \theta, d, r)$ and $D_{reg}(p, R, c_0,\zeta, \eta, \theta, d, r)$ and $n_{reg}(p,c_0,\zeta)$ such that the following holds. Let $V$ be a finite-dimensional $\F_p$-vector space satisfying $\dim V\geq n_{reg}(p,c_0,\zeta)$. Given functions $f^{(1)},\ldots,f^{(R)}\colon V\to[0,1]$, there exist a polynomial factor $\fB$ and a refinement $\fB'$ both on $V$ with parameters $I$ and $I'$ with the following properties. There exists a subatom selection function $s\colon A_I\to A_{I'}$  and a decomposition \[f^{(\ell)}=f_{str}^{(\ell)}+f_{psr}^{(\ell)}+f_{sml}^{(\ell)}\] for each $\ell\in[R]$ such that:
\begin{enumerate}[(i)]
    \item $f^{(\ell)}_{str}=\E[f^{(\ell)}|\fB']$ for each $\ell\in[R]$;
    \item $\|f^{(\ell)}_{psr}\|_{U^{d(\deg\fB,\|\fB\|)+1}}< \eta(\deg\fB',\|\fB'\|)$ for each $\ell\in[R]$;
    \item $f_{str}^{(\ell)}$ and $f_{str}^{(\ell)}+f_{sml}^{(\ell)}$ have range $[0,1]$ and $f_{psr}^{(\ell)}$ and $f_{sml}^{(\ell)}$ have range $[-1,1]$ for each $\ell\in[R]$;
    \item $\rank\fB \geq r(\deg\fB,\|\fB\|)$ and $\rank \fB'\geq r(\deg\fB',\|\fB'\|)$;
    \item for each $a\in A_I$ with $a_{1,0}\neq 0$, it holds that \[\|f^{(\ell)}_{sml}1_{\fB'^{-1}(s(a))}\|_2<\theta(\deg\fB, \|\fB\|)\|1_{\fB'^{-1}(s(a))}\|_2\]for each $\ell\in[R]$;
    \item for all but at most a $\zeta$-fraction of $a\in A_I$ it holds that \[\left|\E_{x\in \fB^{-1}(a)}[f^{(\ell)}(x)]-\E_{x\in\fB'^{-1}(s(a))}[f^{(\ell)}(x)]\right|<\zeta\]for each $\ell\in[R]$;
    \item $I_{1,0}\geq c_0$;
    \item $\|\fB'\|\leq C_{reg}(p, R, c_0, \zeta, \eta, \theta, d, r)$ and $\deg\fB' \leq  D_{reg}(p, R, c_0, \zeta, \eta, \theta, d, r)$.
\end{enumerate}
\end{thm}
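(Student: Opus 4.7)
I will apply the Strong Arithmetic Regularity Lemma (\cref{reg-3}) with tightened parameters to obtain $\fB, \fB'$ and the desired decomposition, and then use the probabilistic method to select a subatom selection function $s_{\bm c}$ (as in \cref{sas}) satisfying the localized conditions (v) and (vi). Concretely, to enforce $I_{1,0} \geq c_0$ and to arrange that the atoms with $a_{1,0} = 0$ form at most a $\zeta/2$-fraction of $A_I$, I set $c_0' := \max(c_0, \lceil \log_p(2/\zeta)\rceil)$ and use \cref{high-rank-exists} to construct an initial polynomial factor $\fB_0$ on $V$ with parameter list $I_0$ having $(I_0)_{1,0} = c_0'$ and zeros elsewhere and of rank at least the $r_{reg'''}$-threshold used in the next step; this sets $n_{reg}(p,c_0,\zeta)$. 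I then apply \cref{reg-3} to $f^{(1)},\ldots,f^{(R)}$ starting from $\fB_0$ with modified parameters $\tilde\zeta := \zeta/4$ and $\tilde\theta(D,N) := \theta(D,N)/(2\sqrt{RN})$, producing factors $\fB, \fB'$ with parameters $I, I'$ and the decomposition. Properties (i)--(iv) and (vii) of the theorem are immediate (the last because $\fB'$ refines $\fB_0$ so $I_{1,0} \geq c_0' \geq c_0$), and the regularity lemma supplies $\|f^{(\ell)}_{sml}\|_2 < \tilde\theta(\deg\fB,\|\fB\|)$ and $|B| \leq \tilde\zeta\,|A_{I'}|$, where $B \subseteq A_{I'}$ is the set of subatoms on which some $f^{(\ell)}$ has an average-difference exceeding $\tilde\zeta$.

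Now sample the coefficients $\bm c = (c_{d,k}^{j,i})$ defining $s_{\bm c}$ independently and uniformly from their respective $\Z/p^{k+1}\Z$'s. The key distributional claim is that for every $a \in A_I$ with $a_{1,0}\neq 0$, $s_{\bm c}(a)$ is uniformly distributed over the fiber $\pi^{-1}(a)$. To verify this, pick $j_0$ with $|a_{1,0}^{j_0}| \neq 0$. Since $P_{d,k}$ has depth exactly $k$, its value $P_{d,k}(1) \in \U_{k+1}\setminus \U_k$ generates $\U_{k+1}$ as a $\Z/p^{k+1}\Z$-module, and by homogeneity $P_{d,k}(|a_{1,0}^{j_0}|) = \sigma_{|a_{1,0}^{j_0}|}^{(d',k)} P_{d,k}(1)$ is also a generator (since $\sigma_{\,\cdot\,}^{(d',k)}$ is a unit mod $p^{k+1}$). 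Thus as $c_{d,k}^{j_0,i}$ ranges uniformly, $c_{d,k}^{j_0,i}\,P_{d,k}(|a_{1,0}^{j_0}|)$ is uniform on $\U_{k+1}$, and so is $[s_{\bm c}(a)]_{d,k}^i$ (the remaining summands merely shift the distribution). Independence of the different coordinates $(d,k,i)$ follows because each draws from a disjoint block of the $c$-variables.

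With uniformity in hand the probabilistic method closes the proof. For (v), Markov's inequality applied inside each fiber, combined with \cref{equidistribution} (to ensure subatoms have nearly equal size at sufficient rank), gives
\[
\EEE_{\bm c}\Big[\#\{(a,\ell) : a_{1,0}\neq 0,\ \EEE_{x\in\fB'^{-1}(s_{\bm c}(a))}(f^{(\ell)}_{sml})^2 \geq \theta(\deg\fB,\|\fB\|)^2\}\Big] \leq \frac{R\,|A_I|\,\tilde\theta^2}{\theta^2} \leq \frac{1}{4},
\]
so (v) holds with probability $\geq 3/4$. For (vi), the expected number of $a\in A_I$ with $a_{1,0}\neq 0$ and $s_{\bm c}(a)\in B$ is at most $(|A_I|/|A_{I'}|)\,|B| \leq \zeta|A_I|/4$, so by Markov with probability $\geq 1/2$ at most $\zeta|A_I|/2$ such $a$ are bad; together with the at most $\zeta|A_I|/2$ atoms having $a_{1,0}=0$ this fits the $\zeta|A_I|$ exception budget. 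A union bound produces a $\bm c$ achieving both simultaneously. The main technical point is the distributional claim in the middle paragraph; it rests on the automatic fact that depth-$k$ homogeneous polynomials $P_{d,k}\colon\F_p\to\U_{k+1}$ take generator values at every nonzero argument, whereupon everything else becomes routine first-moment bookkeeping atop \cref{reg-3}.
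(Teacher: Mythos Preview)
Your proposal is correct and follows the same approach as the paper: apply \cref{reg-3} to an initial factor of $c_0' = \max(c_0,\lceil\log_p(2/\zeta)\rceil)$ linear polynomials with a tightened small-error parameter, then choose $s_{\bm c}$ randomly and use uniformity of $s_{\bm c}(a)$ over fibers together with Markov for (v) and (vi). Two minor remarks: for (vii) you need that $\fB$ (not $\fB'$) refines $\fB_0$, which holds by the construction inside \cref{reg-3}; and there is no need to invoke \cref{high-rank-exists} for $\fB_0$, since any collection of linearly independent linear polynomials already has infinite rank---the paper simply picks such functions directly, which also keeps $n_{reg}$ dependent only on $p,c_0,\zeta$ as stated.
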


\begin{proof}
Define $n_{reg}(p,c_0,\zeta):=\max\{c_0,\lceil{\log_p(2/\zeta)}\rceil\}$. Let $\fB_0$ be a polynomial factor on $V$ defined by $n_{reg}(p,c_0,\zeta)$ linearly independent linear functions. Define $\theta'\colon\Zp\times\Zp\to\Zp$ by $\theta'(D,N):=\theta(D,N)/(2\sqrt{R}N)$.
We apply \cref{reg-3} to $\fB_0$ with parameters $p,R,\|\fB_0\|,1,\zeta/4,\eta,\theta',d,r$. Let $\fB$ and $\fB'$ be the polynomial factors produced. We immediately have conclusions (i), (ii), (iii), and (iv). Conclusion (vii) follows since $\fB$ refines $\fB_0$ which is defined by at least $c_0$ linear forms. Conclusion (viii) holds by defining $C_{reg}(p,R,c_0,\zeta,\eta,\theta,d,r)=C_{reg'''}(p,R,n_{reg}(p,c_0,\zeta),1,\zeta/4,\eta,\theta',d,r)$ and  $D_{reg}(p,R,c_0,\zeta,\eta,\theta,d,r)=D_{reg'''}(p,R,p^{n_{reg}(p,c_0,\zeta)},1,\zeta/4,\eta,\theta',d,r)$.

Let $c_{d,k}^{i,j}\in\Z/p^{k+1}\Z$ be elements chosen independently and uniformly at random for each $(d,k)\in D_p$ and $i\in [I_{1,0}]$ and $I_{d,k}<j\leq I'_{d,k}$. Consider the subatom selection function $s_{\bm c}\colon A_I\to A_{I'}$ defined in \cref{sas}. We claim that with positive probability, this $s_{\bm c}$ satisfies conclusions (v) and (vi). 

Fix $a\in A_I$ with $a_{1,0}\neq 0$. We first claim that for this fixed $a$, as $\bm c$ varies, the subatom $s_{\mathbb c}(a)$ is uniformly distributed over $\pi^{-1}(a)\subset A_{I'}$. To see this, first note that the univariate homogeneous non-classical polynomials $P_{d,k}\colon\F_p\to\U_{k+1}$ used in the definition of $s_{\bm c}$ satisfy $P_{d,k}(x)\not\in\U_k$ for all $x\neq 0$. This follows by homogeneity: if $P_{d,k}(x)\in U_k$ for some $x\neq 0$, then $P_{d,k}$ always takes values in $\U_k$, contradicting the assumption that $P_{d,k}$ has depth exactly $k$. Thus for $a\in A_I$ with $a_{1,0}\neq 0$, we find that the vector $(P_{d,k}(|a_{1,0}^1|),\ldots,P_{d,k}(|a_{1,0}^{I_{1,0}}|))\in\U_{k+1}^{I_{1,0}}$ does not lie in $\U_k^{I_{1,0}}$.

Considering the definition of $s_{\bm c}$, we see that the vector $([s_{\bm c}]^i_{d,k})_{I_{d,k}<i\leq I'_{d,k}}$ is produced by the following matrix multiplication.
\[
\begin{pmatrix}
[s_{\bm c}]^{I_{d,k}+1}_{d,k} \\
[s_{\bm c}]^{I_{d,k}+2}_{d,k} \\
\vdots \\
[s_{\bm c}]^{I'_{d,k}}_{d,k}
\end{pmatrix}
=
\begin{pmatrix}
c^{I_{d,k}+1,1}_{d,k} & c^{I_{d,k}+1,2}_{d,k} & \cdots & c^{I_{d,k}+1,I_{d,k}}_{d,k} \\
c^{I_{d,k}+2,1}_{d,k} & c^{I_{d,k}+2,2}_{d,k} & \cdots & c^{I_{d,k}+2,I_{d,k}}_{d,k} \\
\vdots & \vdots & \ddots & \vdots \\
c^{I'_{d,k},1}_{d,k} & c^{I'_{d,k},2}_{d,k} & \cdots & c^{I'_{d,k},I_{d,k}}_{d,k} 
\end{pmatrix}
\begin{pmatrix}
P_{d,k}(|a^1_{1,0}|) \\
P_{d,k}(|a^2_{1,0}|) \\
\vdots \\
P_{d,k}(|a^{I_{d,k}}_{1,0}|)
\end{pmatrix}
\]
As stated above, the vector on the right lies in $\U_{k+1}^{I_{1,0}}$ but not in $\U_k^{I_{1,0}}$ while the matrix is uniform random with entries in $\Z/p^{k+1}\Z$. This implies that the vector on the left is uniformly distributed over $\U_{k+1}^{I'_{d,k}-I_{d,k}}$, as desired. This holds for each $(d,k)\in D_p$ and furthermore, since for each $(d,k)$ the $c$ matrices are chosen independently, one can see that the resulting vectors are also independent, proving the desired result.

Now note that by \cref{reg-3}(vi), for each $\ell\in[R]$,
\[\sum_{a\in A_{I'}}\|f^{(\ell)}_{sml}1_{\fB'^{-1}(a)}\|_2^2\leq\frac{\theta(\deg\fB, \|\fB\|)^2}{4R\|\fB\|^2}\sum_{a\in A_{I'}}\|1_{\fB'^{-1}(a)}\|_2^2.\]
Thus by Markov's inequality, for $a'\in A_{I'}$ chosen uniformly at random, with probability at least $1-1/(4R\|\fB\|^2)$ the following holds
\begin{equation}
\label{good-atom}
\|f^{(\ell)}_{sml}1_{\fB'^{-1}(a')}\|_2^2\leq\theta(\|\fB\|)^2\|1_{\fB'^{-1}(a')}\|_2^2.
\end{equation}
Thus for a fixed $a\in A_I$ with $a_{1,0}\neq 0$, we have that for $\bm c$ chosen at random, we have $s_{\bm c}(a)$ satisfies \cref{good-atom} with probability at least $1-1/(4R\|\fB\|)$. Taking a union bound over all choices of $a\in A_I$ and $\ell\in[R]$, we see that with probability at least $3/4$, conclusion (v) holds.

Now we deduce conclusion (vi). First note that the fraction of $a\in A_I$ which satisfy $a_{1,0}=0$ is $p^{-I_{1,0}}\leq\zeta/2$. For the other $a\in A_I$, the expected fraction of $a\in A_I$ that fail to satisfy the desired inequality is at most $\zeta/4$. Thus by Markov's inequality, with probability at least $1/2$, at most $\zeta/2$ fraction of $a\in A_I$ satisfy $a_{1,0}\neq 0$ and fail to satisfy the desired inequality. Thus with probability at least $1/2$, conclusion (vi) holds.
\end{proof}


\section{Patching}
\label{sec-patching}

To motivate the kind of results proved in this section consider the following result, which follows from an application of Ramsey's theorem.

\begin{quote}
Let $\cH$ be a finite set of red/blue edge-colored graphs. There exists an integer $n_0=n_0(\cH)$ such that the following holds. Either:
\begin{enumerate}[(a)]
\item either the all-red coloring of $K_n$ or the all-blue coloring of $K_n$ contains no subgraph from $\cH$ for every $n$; or
\item every 2-edge-coloring of $K_n$ with $n\geq n_0$ contains a subgraph from $\cH$.
\end{enumerate}
\end{quote}

We call such a statement a dichotomy result. The first main result of this section, \cref{dichotomy}, is a dichotomy result for our setting. In our setting we consider colored labeled patterns instead of edge-colored subgraphs and instead of monochromatic colorings we have to consider so-called canonical colorings, defined below.

The second main result of this section, \cref{patching}, is our patching result, which is a supersaturation version of the dichotomy result.

\begin{defn}
For a prime $p$, a finite set $\cS$, and a parameter list $I\in\cI_p$, an \textbf{$\cS$-colored $I$-labeled pattern} over $\F_p$ consisting of $m$ linear forms in $\ell$ variables is a triple $(\bm L,\psi,\phi)$ given by:
\begin{itemize}
    \item a system $\bm L=(L_1,\ldots,L_m)$ of $m$ linear forms in $\ell$ variables,
    \item a coloring $\psi\colon [m]\to\cS$, and
    \item a labeling $\phi\colon[m]\to A_I$ (recall the definition of the atom-indexing set $A_I$ from \cref{A}).
\end{itemize}
Given a finite-dimensional $\F_p$-vector space $V$, a function $f\colon V\to\cS$, and a polynomial factor $\fB$ on $V$ with parameters $I$, an \textbf{$(\bm L,\psi,\phi)$-instance} in $(f,\fB)$ is some $\bx\in V^\ell$ such that $f(L_i(\bx))=\psi(i)$ for all $i\in [m]$ and $\fB(L_i(\bx))=\phi(i)$ for all $i\in[m]$. An instance is called \textbf{generic} if $x_1,\ldots,x_\ell$ are linearly independent.
\end{defn}

\begin{defn}
\label{colored-labeled-pattern}
For an $\cS$-colored $I$-labeled pattern $(\bm L,\psi,\phi)$ consisting of $m$ linear forms, a finite dimensional $\F_p$-vector space $V$, a function $f\colon V\to\cS$, and a polynomial factor $\fB$ on $V$ with parameters $I$, define the \textbf{$(\bm L,\psi,\phi)$-density} in $(f,\fB)$ to be
\[\Lambda_{\bm L}(1_{f^{-1}(\psi(1))\cap\fB^{-1}(\phi(1))},\ldots,1_{f^{-1}(\psi(m))\cap\fB^{-1}(\phi(m))}).\]
Given a set $X\subseteq V$, define the \textbf{relative density of $(\bm L,\psi,\phi)$ in $X$} to be
\[\frac{\Lambda_{\bm L}\paren{f_1,\ldots,f_m}}{\Lambda_{\bm L}(1_X,\ldots,1_X)}\] where $f_i:=1_{X\cap f^{-1}(\psi(i))\cap \fB^{-1}(\phi(i))}$.
\end{defn}

\begin{defn}
Define the \textbf{first non-zero coordinate} function $\fnz\colon \F_p^n\to\F_p$ by $\fnz(0,\ldots,0):=0$ and $\fnz(x_1,\ldots,x_n):=x_k$ where $x_1=\cdots=x_{k-1}=0$ and $x_k\neq 0$. Given a finite-dimensional $\F_p$-vector space $V$ equipped with an isomorphism $\iota\colon V\xrightarrow{\sim}\F_p^n$, define the function $\fnz_{\iota}\colon V\to\F_p$ by $\fnz_{\iota}(x):=\fnz(\iota(x))$.
\end{defn}

\begin{defn}
Fix a prime $p$, a finite set $\cS$, a parameter list $I\in\cI_p$, and a function $\xi\colon\F_p\times A_I\to\cS$. For a finite-dimensional $\F_p$-vector space $V$ equipped with an isomorphism $\iota\colon V\xrightarrow{\sim}\F_p^n$ and a polynomial factor $\fB$ on $V$ with parameters $I$, define the \textbf{$\xi$-canonical coloring} $\Xi_{\xi,\iota,\fB}\colon V\to\cS$ by $\Xi_{\xi,\iota,\fB}(x):=\xi(\fnz_{\iota}(x),\fB(x))$. Furthermore, if $\cS$ is equipped with an $\Fpx$-action, say that $\xi$ is \textbf{projective} if the same is true for every function $\Xi_{\xi,\iota,\fB}$. (Note that this property is equivalent to the condition that $\xi$ preserves the action of $\Fpx$, i.e., $\xi(cx,c\cdot a)=c\cdot\xi(x,a)$ for all $c\in\Fpx$, all $x\in\F_p$, and all $a\in A_I$. Recall the action of $\Fpx$ on $A_I$ defined in \cref{A-action}.)
\end{defn}

\begin{defn}
Given a prime $p$, a finite set $\cS$, a parameter list $I\in\cI_p$, a function $\xi\colon\F_p\times A_I\to\cS$, and a $\cS$-colored $I$-labeled pattern $H=(\bm L,\psi,\phi)$, say that \textbf{$\xi$ canonically induces $H$} if the following holds. There exists some $n\geq 0$ and a polynomial factor $\fB$ on $\F_p^n$ with parameters $I$ such that there exists a generic $H$-instance in $(\Xi_{\xi,\Id,\fB},\fB)$. For a finite set of $\cS$-colored $I$-labeled patterns $\cH$, say that \textbf{$\xi$ canonically induces $\cH$} if $\xi$ canonically induces some $H\in\cH$.
\end{defn}

It is not hard to show that if $\xi$ canonically induces $H$, then there exists a generic $H$-instance in $(\Xi_{\xi,\iota,\fB},\fB)$ for every $V,\iota,\fB$ as long as $\dim V$ and $\rank\fB$ are large enough. Our first result is a strengthening of this: if every $\xi$ canonically induces $H$, then there exists a generic $H$-instance in $(f,\fB)$ for every $f\colon V\to\cS$ and every $\fB$ as long as $\dim V$ and $\rank\fB$ are large enough.

\begin{thm}[Dichotomy]
\label{dichotomy}
Fix a prime $p$, a finite set $\cS$ with an $\Fpx$-action, a parameter list $I\in\cI_p$, and a positive integer $\ell_0$. There exist constants $n_{dich}=n_{dich}(p,|\cS|,I,\ell_0)$ and $r_{dich}=r_{dich}(p,|\cS|,I,\ell_0)$ such that the following holds. Let $\cH$ be a finite set of $\cS$-colored, $I$-labeled patterns each defined by a system of linear forms in at most $\ell_0$ variables. Either:
\begin{enumerate}[(a)]
    \item there exists a projective $\xi\colon\F_p\times A_I\to\cS$ that does not canonically induce $\cH$; or
    \item for every finite-dimensional $\F_p$-vector space $V$ satisfying $\dim V\geq n_{dich}$, every projective function $f\colon V\to\cS$, and every polynomial factor $\fB$ on $V$ with parameters $I$ which has rank at least $r_{\text{dich}}$, there is a generic $H$-instance in $(f,\fB)$ for some $H\in\cH$.
\end{enumerate}
\end{thm}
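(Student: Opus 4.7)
My plan is to prove the contrapositive: the negation of (a) implies (b). Suppose every projective $\xi \colon \F_p \times A_I \to \cS$ canonically induces some pattern from $\cH$. Since $\cS$, $\F_p$ and $A_I$ are finite, the set $\Xi$ of such $\xi$ is finite. For each $\xi \in \Xi$, fix a witness $(n_\xi, \fB^*_\xi, \bx^*_\xi, H_\xi)$ where $H_\xi = (\bm L_\xi, \psi_\xi, \phi_\xi) \in \cH$ and $\bx^*_\xi \in (\F_p^{n_\xi})^{\ell_\xi}$ is a generic $H_\xi$-instance in the canonical coloring $(\Xi_{\xi, \Id, \fB^*_\xi}, \fB^*_\xi)$. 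Write $n^* := \max_\xi n_\xi$ and note $\ell_\xi \leq \ell_0$.

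The heart of the proof is a Ramsey-type extraction: find an ordered basis $u_1, \ldots, u_{n^*}$ of a subspace $U \leq V$ (inducing $\iota \colon U \xrightarrow{\sim} \F_p^{n^*}$) together with some $\xi \in \Xi$ such that $f|_U = \Xi_{\xi, \iota, \fB|_U}$. I build the basis iteratively. Suppose $u_1, \ldots, u_{j-1}$ have already been chosen so that $f$ restricted to $\spn(u_1, \ldots, u_{j-1}) \setminus \{0\}$ depends only on the pair $(\fnz_\iota, \fB)$, thereby defining a partial projective coloring $\xi_{j-1}$ on a subset of $\F_p \times A_I$. I then seek $u_j \in V$ such that for every $x \in \spn(u_1, \ldots, u_{j-1})$ and every $c \in \F_p$, the value $f(x + c u_j)$ is consistent with some projective extension $\xi_j$ of $\xi_{j-1}$ evaluated at $(\fnz_\iota(x + c u_j), \fB(x + c u_j))$. \cref{equidistribution} combined with the high-rank hypothesis on $\fB$ guarantees that the joint distribution of atoms $\fB(x + c u_j)$ across $(x, c)$ is close to uniform on the relevant consistency set for a random $u_j$; a pigeonhole over the finite set of possible extensions $\xi_j$ of $\xi_{j-1}$ then shows that a positive fraction of $u_j \in V$ are consistent with some extension. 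Iterating to $j = n^*$ produces the desired $(U, \iota, \xi)$.

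Given $(U, \iota, \xi)$, I transplant $\bx^*_\xi$ into $U$. Fix the first $n_\xi$ coordinates of each $x_i \in U$ (in the basis $\iota$) to equal those of $(\bx^*_\xi)_i$, and let the remaining $n^* - n_\xi$ coordinates vary freely. This automatically ensures $\fnz_\iota(L_{\xi,j}(\bx)) = \fnz_{\Id}(L_{\xi,j}(\bx^*_\xi))$ for every $j$. By iterated application of \cref{hyperplane-rank}, the restriction of $\fB$ to the affine subspace determined by the fixed coordinates retains high rank, so \cref{equidistribution} yields a positive-density set of free-parameter choices for which $\fB|_U(L_{\xi,j}(\bx)) = \phi_\xi(j)$ for all $j$. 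For any such $\bx$, canonicality of $f|_U$ combined with the witness property gives
\[ f(L_{\xi,j}(\bx)) = \xi(\fnz_\iota(L_{\xi,j}(\bx)), \phi_\xi(j)) = \xi(\fnz_{\Id}(L_{\xi,j}(\bx^*_\xi)), \phi_\xi(j)) = \psi_\xi(j). \]
Since generic tuples form a $1 - o(1)$ fraction of $U^{\ell_\xi}$ when $n^*$ is large, the resulting $\bx$ can be taken generic, yielding the required generic $H_\xi$-instance in $(f, \fB)$.

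The principal obstacle is the Ramsey extraction step: at each iteration one must balance the many consistency constraints, indexed by $(x, c) \in \spn(u_1, \ldots, u_{j-1}) \times \F_p$, against the pool of admissible $u_j \in V$. The key ingredients are the high-rank hypothesis on $\fB$, which via \cref{equidistribution} prevents pathological concentration of atoms and enables the averaging argument, and the finiteness of $\Xi$, which powers the pigeonhole committing us to an extension $\xi_j$ at each step. Throughout, \cref{hyperplane-rank} tracks the rank of $\fB$ restricted to the growing span so that enough rank persists for both the iterative step and the final transplantation; setting $n_{dich}$ and $r_{dich}$ to absorb these accumulated losses completes the proof.
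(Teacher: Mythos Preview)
The overall strategy---find a subspace $U$ on which $f$ becomes canonical with respect to some $\xi$, then transplant a witnessing instance---matches the paper's approach, and your transplantation step is essentially correct. The genuine gap is in the Ramsey extraction step: the tools you invoke (equidistribution of atoms plus pigeonhole over extensions of $\xi_{j-1}$) do not control the values of $f$ and therefore cannot establish that a positive fraction of $u_j$ work.

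Concretely, take $I$ trivial so that $A_I$ is a single point and every $\xi_{j-1}$ has a unique extension $\xi_j$; then your pigeonhole is vacuous. With $u_1,\ldots,u_{j-1}$ already fixed, ``$u_j$ is consistent'' means $f(x+cu_j)=\xi_{j-1}(\fnz_\iota(x))$ for every nonzero $x\in\spn(u_1,\ldots,u_{j-1})$ and every $c\neq 0$---roughly $p^j$ simultaneous conditions on the single vector $u_j$. Equidistribution of $\fB$ says nothing here because $\fB$ is trivial, and nothing you wrote rules out the possibility that \emph{no} $u_j$ satisfies all of these constraints. Indeed, already at $j=2$ you are asking for the entire line $u_1+\F_p u_2$ to be monochromatic under $f$ with the prescribed color $\xi_1(1)$; if $u_1$ was chosen at step $1$ to be the unique direction where $f$ takes a rare color, no independent $u_2$ exists. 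What is actually required at each step is a density/Ramsey statement about $f$, not merely about atoms.

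The paper supplies precisely this missing ingredient. Its \cref{ramsey} builds the canonical structure not by fixing $u_1,\ldots,u_{j-1}$ and searching for $u_j$, but by repeatedly passing to large subspaces while extracting one linear functional at a time; the key step (\cref{ramsey-2}) is a Van der Waerden-type lemma whose proof applies the arithmetic regularity lemma (\cref{reg-2}) together with the counting lemma to find an affine coset on which $f$ depends only on $\fB$. That use of regularity is the substantive content you are missing: it is what converts equidistribution of atoms into control over the values of an arbitrary $f$.
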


\begin{proof}
Define $m$ to be the smallest positive integer such that the following holds. Let $H$ be a $\cS$-colored, $I$-labeled patterns defined by a system of linear forms in at most $\ell_0$ variables and let $\xi\colon\F_p\times A_I\to\cS$ be a projective function. If $\xi$ canonically induces $H$, then there exists some $n_H\leq m$ and a polynomial factor $\fB_H$ on $\F_p^{n_H}$ with parameters $I$ such that there exists a generic $H$-instance in $(\Xi_{\xi,\Id,\fB_H},\fB_H)$. This is well defined since there are only a finite number of $\cS$-colored, $I$-labeled patterns defined by a system of linear forms in at most $\ell_0$ variables.

\begin{lemma}
\label{ramsey}
Fix a prime $p$, a finite set $\cS$, a parameter list $I\in\cI_p$, and positive integers $m,r_0,n_0$. There exist constants $n_{ramsey}=n_{ramsey}(p,|\cS|,I,m,r_0,n_0)$ and $r_{ramsey}=r_{ramsey}(p,|\cS|,I,m,r_0,n_0)$ such that the following holds. Let $V$ be a finite dimensional $\F_p$-vector space satisfying $\dim V\geq n_{ramsey}$, let $\fB$ be a polynomial factor on $\F_p^n$ with parameters $I$ such $\rank\fB\geq r_{ramsey}$, and let $f\colon V\to\cS$ be a function. Then there exists a subspace $U\leq V$, and linear functions $P_1,\ldots,P_m\colon V\to\F_p$, and a function $\xi\colon\F_p\times A_I\to\cS$ such that the following holds:
\begin{enumerate}[(i)]
    \item $\xi(\fnz(P_1(x),\ldots,P_m(x)),\fB(x))=f(x)$ for all $x\in U$ that also satisfy $(P_1(x),\ldots,P_m(x))\neq(0,\ldots,0)$;
    \item $\xi(0,0)=f(0)$
    \item the polynomial factor $\fB'$ on $U$ defined by the homogeneous non-classical polynomials that define $\fB|_U$ in addition to the polynomials $P_1,\ldots,P_m$ satisfies $\rank\fB'\geq r_0$;
    \item $\dim U\geq n_0$.
\end{enumerate}
\end{lemma}

Let us show how this lemma completes the proof. Define
\[r_0:=r_{equi}\paren{p,\deg I,\frac1{2(p^m\|I\|)^{p^{m}}}}\qquad\text{and}\qquad n_0:=2p^m(m+\ceil{\log_p\|I\|}).\]
Then define
\[n_{dich}(p,|\cS|,I,\ell_0):=n_{ramsey}(p,|\cS|,I,m,r_0,n_0),\]
and
\[r_{dich}(p,|\cS|,I,\ell_0):=r_{ramsey}(p,|\cS|,I,m,r_0,n_0).\]

Let $\cH$ be a finite set of $\cS$-colored, $I$-labeled patterns each defined by a system of linear forms in at most $\ell_0$ variables. Suppose (a) does not hold. Thus for every projective $\xi\colon\F_p\times A_I\to\cS$, there exists an $H\in\cH$ such that $\xi$ canonically induces $H$.

Now we apply \cref{ramsey} to $f\colon V\to\cS$. This produces a subspace $U\leq V$, linear functions $P_1,\ldots,P_m$, and a function $\xi\colon\F_p\times A_I\to\cS$ with several desirable properties.

First note that since $f\colon V\to\cS$ is projective, the same is true of $\xi\colon\F_p\times A_I\to\cS$. Thus by assumption there exists an $H\in\cH$ such that $\xi$ canonically induces $H$. By the choices in the first paragraph, there exists a $n_H\leq m$ and a polynomial factor $\fB_H$ on $\F_p^{n_H}$ such that $(\Xi_{\xi,\Id,\fB_H},\fB_H)$ contains a generic $H$-instance. 

To complete the proof, all we need to show is that there exists an injective linear map $\kappa\colon \F_p^{n_H}\to U$ such that $\fB_H(x)=\fB(\kappa(x))$ for all $x\in \F_p^{n_H}$ and $\fnz(x)=\fnz(P_1(\kappa(x)),\ldots,P_m(\kappa(x)))$ for all $x\in \F_p^{n_H}$. This follows by an application of equidistribution (\cref{equidistribution}).

Recall the definition of $\bm L^{n_H}$ (\cref{L}), the system of $p^{n_H}$ linear forms in $n_H$ variables that define an $n_H$-dimensional subspace. 

Say that $\fB'$ has parameters $I'$. Thus the atom-indexing set of $\fB'$ can be written as  $A_{I'}\simeq \F_p^m\times A_I$. We define the following tuple of atoms $\bm a=(a_{\bm i})_{i\in \F_p^{n_H}}$ by $a_{\bm i}=((i_1,\ldots,i_{n_H},0,\ldots,0),\fB_H(\bm i))$ where there are $m-n_H$ 0's. We claim that $\bm a\in \Phi_{I'}(\bm L^{n_H})$. We can check this separately for the first and second coordinate; each is trivial.

Thus by \cref{equidistribution} and the rank bound on $\fB'$, we have
\[\Pr_{x_1,\ldots,x_{n_H}\in U}\paren{\fB'(L^{n_H}_{\bm i}(x_1,\ldots,x_{n_H}))=a_{\bm i}\text{ for all }\bm i\in\F_p^{n_H}}\geq \frac{1}{|\Phi_{I'}(\bm L^{n_H})|}-\frac1{2(p^m\|I\|)^{p^m}}\geq\frac1{2(p^m\|I\|)^{p^m}}.\]
We wish to find a single tuple $(x_1,\ldots,x_{n_H})\in V^{n_H}$ that satisfies the above condition and also has $x_1,\ldots,x_{n_H}$ linearly independent. The number of linearly dependent tuples is small, so we calculate that the number of good tuples is at least\[\frac{|U|^{n_H}}{2(p^m\|I\|)^{p^m}}-|U|^{n_H-1}p^{n_H}.\]
This is positive by our assumption that $\dim U\geq n_0$. Thus there exists some good tuple $(x_1,\ldots,x_{n_H})\in V^{n_H}$. Defining $\kappa\colon \F_p^{n_H}\to U$ by $\kappa(\bm i):=L^{n_H}_{\bm i}(x_1,\ldots,x_{n_H})$ has all the desired properties. Thus we have shown (b) assuming that (a) does not hold.
\end{proof}

\begin{proof}[Proof of \cref{ramsey}]
Define $M:=m|\cS|^{p\|I\|}$. Our strategy is to find a large subspace $U_M$ and linear functions $P_1,\ldots,P_M\colon V\to \F_p$ such that for $x\in U_M$, the value of $f(x)$ only depends on $\fB(x)$, $\fnz(P_1(x),\ldots,P_M(x))$, and the index $k$ such that $P_1(x)=\cdots=P_{k-1}(x)=0$ and $P_k(x)\neq 0$. Once we have found such a configuration, we can complete the proof by a simple pigeonhole argument. 

Define \[\cL:=\{p^sP_{(p-1)s+1,s}^i:s\geq0,i\in[I_{(p-1)s+1,s}]\},\]where $P_{d,k}^i$ is the $i$th homogeneous non-classical polynomial of degree $d$ and depth $k$ defining $\fB$. Note that $\cL$ is a finite set of linear functions (in particular, it is the set of all $p^sP$ that are classical linear polynomials where $s\geq 0$ is a non-negative integer and $P$ is one of the homogeneous non-classical polynomials that define $\fB$.) It is immediate from the definition of rank that if $P_1,\ldots,P_m$ are linear functions such that $\{P_1,\ldots,P_m\}\cup\cL$ are linearly independent, then $\rank \fB'=\rank\fB$. We will use this fact to guarantee conclusion (iii).

Our main tool is the following lemma which is a Van der Waerden-type result.

\begin{lemma}
\label{ramsey-2}
Fix a prime $p$, a finite set $\cS$, a parameter list $I\in\cI_p$, and positive integers $n_0,r_0$. There exist constants $n_{ramsey'}=n_{ramsey'}(p,|\cS|,I,n_0,r_0)$ and $r_{ramsey'}=r_{ramsey'}(p,|\cS|,I,n_0,r_0)$ such that the following holds. Let $V$ be a finite dimensional $\F_p$-vector space satisfying $\dim V\geq n_{ramsey'}$ and let $P\colon V\to\F_p$ be a non-trivial linear function. Let $\fB$ be a polynomial factor on $\F_p^n$ with parameters $I$ and let $\fB'$ be the common refinement of $\fB$ and $\{P\}$. Suppose that $\rank\fB'\geq r_{ramsey'}$. Let $f\colon V\to\cS$ be a function. Then there exists a subspace $U\leq V$ contained in the zero set of $P$, a vector $z\in V$ such that $P(z)=1$, and a function $\chi\colon A_I\to\cS$ such that the following holds:
\begin{enumerate}[(i)]
    \item $\chi(\fB(x))=f(x)$ for all $x\in z+U$;
    \item $\rank \fB|_U\geq r_0$;
    \item $\dim U\geq n_0$.
\end{enumerate}
\end{lemma}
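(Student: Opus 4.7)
The plan is to iterate a one-step finite-field affine Hales--Jewett argument across the $\|I\|$ atoms of $A_I$, while controlling rank via \cref{hyperplane-rank}. By the second part of \cref{hyperplane-rank} applied to $\fB'$, we have $\rank(\fB|_{\ker P})\geq \rank\fB'-p$, so the factor on the hyperplane $\ker P$ is still high rank. Fix any $z_0\in V$ with $P(z_0)=1$, so that $z_0+\ker P$ coincides with the affine hyperplane $\{P=1\}$ inside which we will look for the desired pair $(z,U)$.

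I would enumerate $A_I=\{a_1,\ldots,a_k\}$ with $k:=\|I\|$ and build, by induction on $j\in\{0,1,\ldots,k\}$, a nested sequence of affine subspaces $z^{(j)}+W^{(j)}\subseteq\{P=1\}$ with $W^{(j)}\leq\ker P$ together with a cumulatively defined map $\chi\colon\{a_1,\ldots,a_j\}\to\cS$, satisfying the invariant that for each $i\leq j$, $f$ equals $\chi(a_i)$ identically on $(z^{(j)}+W^{(j)})\cap\fB^{-1}(a_i)$. Starting from $z^{(0)}:=z_0$ and $W^{(0)}:=\ker P$, the step $j\to j+1$ considers the $(|\cS|+1)$-coloring $g\colon W^{(j)}\to\cS\cup\{\star\}$ defined by $g(y):=f(z^{(j)}+y)$ if $\fB(z^{(j)}+y)=a_{j+1}$ and $g(y):=\star$ otherwise; I then invoke finite-field affine Hales--Jewett (via repeated application of van der Waerden over $\F_p$) to extract a $g$-monochromatic affine subspace $y_j+W^{(j+1)}\subseteq W^{(j)}$ of any prescribed target dimension $n^{(j+1)}$, provided $\dim W^{(j)}\geq N_{HJ}(p,|\cS|+1,n^{(j+1)})$. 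Set $z^{(j+1)}:=z^{(j)}+y_j$ and extend $\chi$ by the resulting monochromatic value in $\cS$ (or arbitrarily if the value is $\star$, in which case atom $a_{j+1}$ is disjoint from $z^{(j+1)}+W^{(j+1)}$ and the invariant holds vacuously there).

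For the parameter bookkeeping, I would set $n^{(k)}:=n_0$ and recursively $n^{(j)}:=N_{HJ}(p,|\cS|+1,n^{(j+1)})$; then $n_{ramsey'}:=n^{(0)}+1$ (so that $\dim\ker P\geq n^{(0)}$) is a $k$-fold iterate of $N_{HJ}$, depending only on $p,|\cS|,I,n_0$. Each one-step refinement raises the codimension inside $\ker P$ by at most $n^{(j)}-n^{(j+1)}$, so the total codimension from $\ker P$ to $W^{(k)}$ is at most $n^{(0)}-n_0$; iterated application of \cref{hyperplane-rank} then yields $\rank(\fB|_{W^{(k)}})\geq\rank(\fB|_{\ker P})-p(n^{(0)}-n_0)$, so it suffices to take $r_{ramsey'}:=r_0+p(n^{(0)}-n_0+1)$, a function only of $p,|\cS|,I,n_0,r_0$. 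Taking $z:=z^{(k)}$ and $U:=W^{(k)}$ then gives conclusions (i)--(iii).

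The main obstacle will be the iterated application of \cref{hyperplane-rank}: each codimension-one restriction requires the augmented polynomial factor to have high enough rank, which is only automatic when the added linear polynomial is sufficiently independent of the existing polynomials of $\fB$. I would address this by preemptively refining $\fB$ using \cref{high-rank} to obtain a factor in which every non-trivial linear combination of defining polynomials has rank well above the final target, so that any linear constraint subsequently imposed by Hales--Jewett drops the effective rank by at most $p$ per codimension, keeping the rank bookkeeping above within the prescribed budget.
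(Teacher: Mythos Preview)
There is a genuine gap. Trace through your iteration: at each step $j$, the Hales--Jewett subspace is either entirely inside atom $a_{j+1}$ (the non-$\star$ case) or entirely disjoint from it (the $\star$ case). Once you land in the non-$\star$ case for some $j$, the affine subspace $z^{(j+1)}+W^{(j+1)}$ lies completely in atom $a_{j+1}$, so every subsequent step is automatically $\star$. Since every point lies in some atom, you must hit the non-$\star$ case exactly once, and your final $z+U$ sits entirely inside a single atom of $\fB$. Condition (i) then holds trivially, but condition (ii) collapses: whenever $I_{1,0}\geq 1$, each linear polynomial $P^i_{1,0}$ of $\fB$ is constant on $z+U$, hence identically zero on $U$, forcing $\rank\fB|_U=0$.

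Your proposed fix cannot rescue this. The hypothesis of \cref{hyperplane-rank} requires that adjoining the cutting hyperplane's linear form $Q$ to $\fB$ keeps the rank high, but the linear forms handed to you by Hales--Jewett are completely uncontrolled and will, by the argument above, eventually span the linear polynomials already in $\fB$; for such $Q$ the refinement $\fB\cup\{Q\}$ has rank $0$ and \cref{hyperplane-rank} gives nothing. Preemptively refining $\fB$ via \cref{high-rank} only adds more polynomials to worry about and does not change the fact that the original linear polynomials of $\fB$ vanish on $U$. The paper's proof avoids this entirely by a different mechanism: it applies the arithmetic regularity lemma (\cref{reg-2}) and then uses equidistribution (\cref{equidistribution}) to \emph{choose} the subspace so that $\fB$ restricted to it matches a pre-built high-rank model factor $\fB_1$ on $\F_p^{n_1}$; this is what guarantees $\rank\fB|_U\geq r_0$. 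The Ramsey-theoretic input there is not Hales--Jewett on $f$ but rather a density/counting argument showing that among the many subspaces with the prescribed $\fB$-structure, at least one is also correctly colored.
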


Define $r_1,\ldots, r_M$ and $n_1,\ldots,n_M$ by \[n_i:=\max\{n_{ramsey'}(p,|\cS|^{p-1},I,n_{i-1},r_{i-1}),|\cL|+1\}\]and\[r_i:=r_{ramsey'}(p,|\cS|^{p-1},I,n_{i-1},r_{i-1}).\]
Then define \[n_{ramsey}(p,|\cS|,I,m,r_0,n_0):=n_M\qquad\text{and}\qquad r_{ramsey}(p,|\cS|,I,m,r_0,n_0):=r_M.\]

We will find nested subspaces $V=U_0\geq U_1\geq \cdots\geq U_M$, linear functions $P_i\colon V\to\F_p$, and functions $\xi_i\colon\Fpz\times A_I\to \cS$ such that the following holds for each $i\in[M]$:
\begin{itemize}
    \item $\xi_i(P_i(x),\fB(x))=f(x)$ for all $x\in U_i$ that satisfy $P_1(x)=\cdots=P_{i-1}(x)=0$ and $P_i(x)\neq 0$;
    \item $\{P_1,\ldots,P_i\}\cup \cL$ are linearly independent;
    \item $\rank \fB|_{W_i}\geq r_{M-i}$ where $W_i:=\{x\in U_i:P_1(x)=\cdots=P_i(x)\}$;
    \item $\dim W_i\geq n_{M-i}$ where $W_i:=\{x\in U_i:P_1(x)=\cdots=P_i(x)\}$.
\end{itemize}

Suppose we have defined $V=U_0\geq U_1\geq\cdots \geq U_i$, linear functions $P_1,\ldots,P_i\colon V\to\F_p$, and functions $\xi_1,\ldots,\xi_i\colon\Fpz\times A_I\to\cS$ with the above properties.

Define $W:=\{x\in U_i:P_1(x)=\cdots=P_i(x)=0\}$. We have $\dim W\geq m_i>|\cL|$. Pick an arbitrary $y\in W$ such $y\neq 0$ but all of the linear functions $\cL$ vanish on $y$. Let $P_{i+1}\colon V\to\F_p$ be an arbitrary linear function such that $P_{i+1}(y)=1$. Note that automatically we have $\{P_1,\ldots,P_i,P_{i+1}\}\cup\cL$ are linearly independent.

Define $W':=\{x\in W:P_{i+1}(x)=0\}$. Note that the subspace $W$ is partitioned into hyperplanes as $W=W'\sqcup(y+W')\sqcup(2y+W')\sqcup\cdots$. Write $\ol\cS:=\cS^{\Fpx}$. Then define $\ol f\colon W\to\ol\cS$ by \[\ol f(x+ty):=(f(bx+by))_{b\in\Fpx}\qquad\text{for }x\in W'\text{ and }t\in\F_p.\]

We apply \cref{ramsey-2} to $\ol f,\fB|_W,P_{i+1}$ with parameters $n_{M-i-1},r_{M-i-1}$ to produce a subspace $U_i'\leq W'$, a vector $z\in y+U_i'$ and a function $\chi\colon A_I\to\ol S$ with several desirable properties.

We have $\chi(\fB(x+z))=\ol f(x+z)$ for all $x\in U_i'$. Looking at the $b$th coordinate of this equation for some $b\in\Fpx$ gives $\chi(\fB(x+z))_a=f(bx+bz)$. Finally using the homogeneity of $\fB$ (recall the action of $\Fpx$ on $A_I$ defined in \cref{A-action}) gives $\chi(b^{-1}\cdot \fB(bx+bz))_a=f(bx+bz)$ for all $x\in U_i'$ and all $b\in\Fpx$.

Define $U_{i+1}\leq U_i$ to be a $(\dim W+i+1)$-dimensional subspace of $U_i$ that contains $z$ and $W$ and such that none of $P_1,\ldots, P_i$ are identically 0 on $U_{i+1}$. Then define $\xi_{i+1}\colon\Fpz\times A_I\to\cS$ by \[\xi_{i+1}(b,a):=\chi(b^{-1}\cdot a)_b.\]
Note $\dim U_{i+1}\geq \dim W'\geq m_{i+1}$ and $\rank\fB|_{U_{i+1}}\geq\rank\fB|_{W'}\geq r_{i+1}$. Furthermore, for $x\in U_{i+1}$ such that $P_1(x)=\cdots=P_{i-1}(x)=0$ and $P_i(x)=b\neq 0$, we can write $x=bx'+bz$ for some $x'\in W'$. Then $f(x)=\ol f(x'+z)_b=\chi(\fB(x'+z))_b=\chi(b^{-1}\cdot\fB(bx'+bz))_b=\xi_{i+1}(b,\fB(x))$, as desired.

Thus we have defined a sequence of nested subspaces $V=U_0\leq\cdots\leq U_M$, linear functions $P_1,\ldots,P_m\colon V\to\F_p$ and functions $\xi_1,\ldots,\xi_M\colon\Fpz\times A_I\to\cS$ with the above properties.

Finally note that the number of possible functions $\Fpz\times A_I\to\cS$ is at most $|\cS|^{p\|I\|}$. Thus by the pigeonhole principle, there exists $1\leq i_1<\cdots< i_m\leq M$ such that $\xi_1=\cdots=\xi_m$. Define $\xi\colon\F_p\times A_I\to \cS$ by $\xi(0,a)=f(0)$ for all $a\in A$ and $\xi(b,a)=\xi_1(b,a)$ for all $b\in\Fpz$ and all $a\in A$. Define $W_M:=\{x\in U_M:P_1(x)=\cdots=P_M(x)=0\}$ and let $U$ be a $(\dim W_M+m)$-dimensional subspace of $V$ that contains $W_M$ and such that none of $P_{i_1},\ldots,P_{i_m}$ are identically 0 on $U$. Then $U$, $P_{i_1},\ldots,P_{i_m}$, and $\xi$ have all the desired properties.
\end{proof}

\begin{proof}[Proof of \cref{ramsey-2}]
Define constants \[n_1:=\max\{n_0,n_{high-rank}(p,I,r_0+p)\},\]and\[\theta:=\frac{1}{8(2|\cS|)^{2p^{n_1}}}\qquad\text{and}\qquad\theta':=\frac{\theta}{\|I\|\sqrt{2p|\cS|}}\]
Define the non-increasing functions $\alpha\colon\Zp\to(0,1)$ by
\[\alpha(N):=\frac1{2N^{2p^{n_1}}},\]
and $\eta\colon\Zp\to(0,1)$ by
\[\eta(N):=\frac{1}{8(3|\cS|N)^{p^{n_1}}},\]
and define the non-decreasing function $r\colon\Zp\to\Zp$ by \[r(N):=r_{equi}(p,p^{n_1},\alpha(N)).\]
Define
\[n_{ramsey'}(p,|\cS|,I,n_0,r_0):=2p^{n_1}\ceil{\log_p(16|\cS|C_{reg''}(p,p^{n_1},|\cS|,p\|I\|,\theta',\eta,r))}\]
and
\[r_{ramsey'}(p,|\cS|,I,n_0,r_0):=r_{reg''}(p,p^{n_1},|\cS|,p\|I\|,\theta',\eta,r)\]

For $c\in\cS$, define $f^{(c)}\colon V\to[0,1]$ by $f^{(c)}:=1_{f^{-1}(c)}$. We are now ready to proceed with the proof. We apply the arithmetic regularity lemma, \cref{reg-2}, to the polynomial factor $\fB'$ on $V$ and the functions $(f^{(c)})_{c\in\cS}$ with parameters $p,p^{n_1},|\cS|,p\|I\|,\theta',\eta,r$. This produces a polynomial factor $\fB''$ refining $\fB'$ and decompositions $f^{(c)}=f_{str}^{(c)}+f_{psr}^{(c)}+f_{sml}^{(c)}$ with several desirable properties.

Say that $\fB'$ has parameters $I'$ and $\fB''$ has parameters $I''$ (note that $\|I'\|=p\|I\|$). We consider the atom-indexing set of $\fB''$ (see \cref{A} for the definition) as $A_{I''}\simeq \F_p\times A_I\times A_{I''-I'}$.

Say that an atom $a\in A_{I''}$ is \emph{regular} if \begin{equation*}
\label{regular}
\|f^{(c)}_{sml}1_{\fB''^{-1}(a)}\|_2\leq\theta\|1_{\fB''^{-1}(a)}\|_2\qquad\text{for all }c\in\cS.
\end{equation*}
Our first goal is to find $s\in A_{I''-I'}$ such that all atoms of the form $(1,a,s)\in A_{I''}\simeq\F_p\times A_I\times A_{I''-I'}$ are regular.

By \cref{reg-2}(v), for each $c\in\cS$,
\[\sum_{a\in A_{I''}}\|f^{(c)}_{sml}1_{\fB''^{-1}(a)}\|_2^2\leq\frac{\theta^2}{2p|\cS|\|I\|^2}\sum_{a\in A_{I''}}\|1_{\fB''^{-1}(a)}\|_2^2.\] Thus at least a $(1-1/(2p\|I\|^2))$-fraction of atoms are regular. For each $a\in A_I$, at least a $(1-1/(2\|I\|))$-fraction of the atoms of the form $(1,a,s)$ are regular, for $s\in A_{I''-I'}$. Thus by a union bound there exists some $s\in A_{I''-I'}$ such that $(1,a,s)$ is regular for all $a\in A_I$. Fix this value of $s$ for the rest of the proof.

Define $\chi\colon A_I\to \cS$ such that $\chi(a)$ is a color that appears in the atom $\fB^{-1}(1,a,s)$ with density at least $1/|\cS|$.

By the definition of $n_1$ and \cref{high-rank-exists}, there exists a polynomial factor $\fB_1$ on $\F_p^{n_1}$ with parameters $I$ and satisfying $\rank\fB_1\geq r_0+p$. Our goal is to find vectors $x_0,x_1,\ldots,x_{n_1}\in V$ such that
\begin{itemize}
    \item $\fB''(x_0+i_1x_1+\cdots+i_{n_1}x_{n_1})=(1,\fB_1(i_1,\ldots,i_{n_1}),s)$ for all $(i_1,\ldots,i_{n_1})\in\F_p^{n_1}$;
    \item $f(x_0+i_1x_1+\cdots+i_{n_1}x_{n_1})=\chi(\fB_1(i_1,\ldots,i_{n_1}))$ for all $(i_1,\ldots,i_{n_1})\in\F_p^{n_1}$;
    \item $x_1,\ldots,x_{n_1}$ are linearly independent.
\end{itemize}

We choose $x_0,x_1,\ldots,x_{n_1}\in V$ independently and uniformly at random. Let $p_1$ be the probability that this choice of $\bm x$ satisfies all three conditions above. First note that the probability that $x_0,\ldots,x_{n_1}$ are linearly dependent is at most $p^{n_1+1}/|V|$. Let $p_2$ be the probability that this choice of $\bm x$ satisfies the first two conditions above. We have shown that\[p_1\geq p_2-p^{n_1+1}/|V|.\]

Let $\bm L=(L_{\bm i})_{\bm i\in\F_p^{n_1}}$ to be the system of $p^{n_1}$ linear forms in $n_1+1$ variables defined by \[L_{\bm i}(x_0,x_1,\ldots,x_{n_1}):=x_0+i_1x_1+\cdots+i_{n_1}x_{n_1}.\] This system defines an $n_1$-dimensional affine subspace. One can easily see that $\bm L$ is finite complexity and in fact its complexity is at most $p^{n_1}$ (see \cref{rem-complexity}).

Define $g^{(\bm i)}:=f^{(\chi(\fB_1(\bm i)))}$ and define $g^{(\bm i)}_{str},g^{(\bm i)}_{sml},g^{(\bm i)}_{psr}$ similarly. Define $h^{(\bm i)}\colon V\to[0,1]$ by $h^{(\bm i)}:=1_{\fB''^{-1}(1,\fB_1(\bm i),s)}$.

We compute $p_2$ as 
\begin{align*}
p_2 & =\E_{\bx}\left[\prod_{\bm i\in\F_p^{n_1}} g^{(\bm i)}(L_{\bm i}(\bx))h^{(\bm i)}(L_{\bm i}(\bx))\right] \\
& =\E_{\bx}\left[\prod_{\bm i}\left( g^{(\bm i)}_{str}(L_{\bm i}(\bx))+g^{(\bm i)}_{sml}(L_{\bm i}(\bx))+g^{(\bm i)}_{psr}(L_{\bm i}(\bx))\right)h^{(\bm i)}(L_{\bm i}(\bx))\right]\\
&\geq \E_{\bx}\left[\prod_{\bm i}\left(g^{(\bm i)}_{str}(L_{\bm i}(\bx))+g^{(\bm i)}_{sml}(L_{\bm i}(\bx))\right)h^{(\bm i)}(L_{\bm i}(\bx))\right]-3^{p^{n_1}}\eta(\|\fB''\|).
\end{align*}
The inequality follows from \cref{reg-2}(ii) and the counting lemma, \cref{counting-lemma}.

Write $p_3$ for the expectation in the last line above. We have $p_2\geq p_3-3^{p^{n_1}}\eta(\|\fB''\|)$. Expanding the product, there are at most $2^{p^{n_1}}$ terms involving $g^{(\bm j)}_{sml}$ for some $\bm j\in\F_p^{n_1}$. Each of these is bounded in magnitude by \[\E_{\bx}\left[\abs{g^{(\bm j)}_{sml}(L_{\bm j}(\bx))}\prod_{\bm i}h^{(\bm i)}(L_{\bm i}(\bx))\right].\] By applying a change of coordinates, we can transform to the case that $\bm j=0$ (and $L_{\bm 0}(\bm x)=x_0$). Then by the Cauchy-Schwarz inequality, the square of the above expression is bounded by\[\E_{x_0}\left[\abs{g^{(0)}_{sml}(x_0)}^2 h^{(0)}(x_0)\right]\E_{x_0}\left[h^{(0)}(x_0)\E_{x_1,\ldots,x_{n_1}}\left[\prod_{\bm i\neq0}h^{(\bm i)}(L_{\bm i}(\bx))\right]^2\right].\]
The first term is at most $\theta(\|\fB\|)^2\|h^{(0)}\|_2^2$ by the fact that $(1,\fB_1(\bm i),s)$ is a regular atom for all $\bm i$. The second term can be counted by equidistribution applied to the system $\bm L'$ of $2p^{n_1}-1$ linear forms in $2n_1-1$ variables defined as follows. Set \[L'_{0}(x_0,x_1,\ldots,x_{n_1},x'_1,\ldots,x'_{n_1}):=x_0,\]
and for $\bm i\in\F_p^{n_1}\setminus\{0\}$, define
\[L'_{\bm i,1}(x_0,x_1,\ldots,x_{n_1},x'_1,\ldots,x'_{n_1}):=x_0+i_1x_1+\cdots+i_{n_1}x_{n_1},\]
\[L'_{\bm i,2}(x_0,x_1,\ldots,x_{n_1},x'_1,\ldots,x'_{n_1}):=x_0+i_1x'_1+\cdots+i_{n_1}x'_{n_1}.\]
By \cref{CS-consistency}, we know that $\|\fB''\|\cdot|\Phi_{I''}(\bm L')|=|\Phi_{I''}(\bm L)|^2$ (see also \cite[Lemma 5.13]{BFHHL13}.)

Thus by equidistribution, \cref{equidistribution}, and the rank bound on $\fB''$, we have the second term is at most
\[\frac{1}{|\Phi_{I''}(\bm L')|}+\alpha(\|\fB''\|)=\frac{\|\fB''\|}{|\Phi_{I''}(\bm L)|^2}+\alpha(\|\fB''\|)\leq\frac{2\|\fB''\|}{|\Phi_{I''}(\bm L)|^2}.\]
Applying equidistribution again we have that the first term is at most
\[\theta^2\paren{\frac{1}{\|\fB''\|}+\alpha(\|\fB''\|)}\leq\frac{2\theta^2}{\|\fB''\|}.\]
Combining these bounds and summing over all terms that contain some $g^{(\bm j)}_{sml}$, we see that
\[p_3\geq \E_{\bx}\left[\prod_{\bm i}g_{str}^{(\bm i)}(L_{\bm i}(\bx))h^{(\bm i)}(L_{\bm i}(\bm x))\right]-2^{p^{n_1}+1}\frac{\theta}{|\Phi_{I''}(\bm L)|}.\]

Write $p_4$ for the expectation in the last line. The quantity $g^{(\bm i)}_{str}(L_{\bm i}(\bm x))$ is the density of $\chi(\fB_1(\bm i))$ in the atom of $\fB''$ that $L_{\bm i}(\bm x)$ lies in. When $\fB(L_{\bm i}(\bm x))=(1,\fB_1(\bm i),s)$, the choice of $\chi$ implies that this density is at least $1/|\cS|$. Thus
\[p_4\geq \frac{1}{|\cS|^{p^{n_1}}}\E_{\bx}\left[\prod_{\bm i}h^{(\bm i)}(L_{\bm i}(\bx))\right].\] 

Write $p_5$ for the expectation in the last line. Unwrapping the definition of $h^{(\bm i)}$, this can be written as \[p_5=\Pr_{\bm x}\paren{\fB''(L_{\bm i}(\bm x))=(1,\fB_1(\bm i),s)\text{ for all }\bm i\in\F_p^{n_1}}.\]
We claim that $((1,\fB_1(\bm i),s))_{\bm i\in\F_p^{n_1}}$ is an $\bm L$-consistent tuple of atoms. We check this coordinate by coordinate. Obviously $(\fB_1(\bm i))_{\bm i\in\F_p^{n_1}}$ is $\bm L$-consistent. Furthermore any constant tuple is also obviously $\bm L$-consistent (this follows from the fact that $\bm L$ is a translation-invariant pattern). Thus by another application of equidistribution and the rank bound on $\fB''$, we have
\[p_5\geq \frac{1}{|\Phi_{I''}(\bm L')|}-\alpha(\|\fB'\|)\geq\frac{1}{2|\Phi_{I''}(\bm L')|}.\]

Combining all these inequalities, we see that \[p_1\geq \frac{1}{|\cS|^{p^{n_1}}}\frac{1}{2|\Phi_{I''}(\bm L')|} - 2^{p^{n_1}+1}\frac{\theta}{|\Phi_{I''}(\bm L)|}-3^{p^{n_1}}\eta(\|\fB''\|)-\frac{p^{n_1+1}}{|V|}.\]
This expression is positive by the definition of $\theta,\eta$ and the assumption that $\dim V\geq n_{ramsey'}$.

Thus we have define a function $\chi\colon A_I\to\cS$ and found linearly independent $x_0,x_1,\ldots,x_{n_1}\in V$ with several desirable properties. Define $z:=x_0$ and $U:=\spn\{x_1,\ldots,x_{n_1}\}$. Since $P\colon V\to\F_p$ is a linear function and $P(x_0+i_1x_1+\cdots+i_{n_1}x_{n_1})=1$ for all $\bm i\in\F_p^{n_1}$, we conclude that $P(z)=P(x_0)=1$ and $U$ is contained in the zero set of $P$. Furthermore, $\fB(x_0+i_1x_1+\cdots+i_{n_1}x_{n_1})=\fB_1(i_1,\ldots,i_{n_1})$ for all $\bm i\in\F_p^{n_1}$. Since $\fB_1$ was chosen such that $\rank\fB_1\geq r_0+p$ and $n_1\geq n_0$, we see that $\dim U\geq n_0$ and $\rank \fB|_{\spn\{x_0,U\}}\geq r_0+p$. By \cref{hyperplane-rank}, we conclude that $\rank\fB|_U\geq r_0$. Finally, $f(x_0+i_1x_1+\cdots+i_{n_1}x_{n_1})=\chi(\fB_1(i_1,\ldots,i_{n_1}))=\chi(\fB(x_0+i_1x_1+\cdots+i_{n_1}x_{n_1}))$ for all $\bm i\in\F_p^{n_1}$, which proves the desired result.
\end{proof}

We boost this Ramsey dichotomy result to a density result using a Cauchy-Schwarz supersaturation argument. For technical reasons we need this result to hold inside ``subvarieties'' of a vector space, i.e., the zero sets of a sufficiently high-rank collection of non-classical polynomials. Again for technical reasons, this supersaturation argument only works for full dimensional patterns (recall \cref{full-dimensional}).

\begin{thm}[Patching]
\label{patching}
Fix a prime $p$, a finite set $\cS$ with an $\Fpx$-action, parameter lists $I,I'\in\cI_p$ satisfying $I\leq I'$, and a positive integer $\ell_0$. There exist constants $n_{patch}=n_{patch}(p,|\cS|, I',\ell_0)$ and $\beta_{patch}=\beta_{patch}(p,|\cS|,I,\ell_0)>0$ and a non-decreasing function $r_{patch} = r_{patch}(p, |\cS|, I, \ell_0) \colon \Zp\times\Zp \to \Zp$ such that the following holds.  Let $\cH$ be a finite set of $\cS$-colored, $I$-labeled patterns such that each pattern is defined by a full dimension system of linear forms in at most $\ell_0$ variables (recall \cref{full-dimensional}). Either:
\begin{enumerate}[(a)]
    \item there exists a projective $\xi\colon\F_p\times A_I\to\cS$ that does not canonically induce $\cH$; or
    \item for every finite-dimensional $\F_p$-vector space $V$ satisfying $\dim V\geq n_{patch}$, every projective function $f\colon V\to\cS$, every polynomial factor $\fB$ on $V$ with parameters $I$ that satisfies $\rank\fB\geq r_{\text{patch}}(\deg\fB,\|\fB\|)$, and every polynomial factor $\fB'$ on $V$ with parameters $I'$ that refines $\fB$ and satisfies $\rank\fB'\geq r_{patch}(\deg\fB',\|\fB'\|)$, there is a pattern $H\in\cH$ such that in $(f,\fB)$, the relative density of $H$ in $\fB'^{-1}(A_I\times\{0\})$ is at least $\beta_{patch}$.
\end{enumerate}
\end{thm}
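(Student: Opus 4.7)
The strategy is to deduce the desired density bound by combining the single-instance guarantee of the dichotomy theorem (\cref{dichotomy}) with a supersaturation/double-counting argument controlled by equidistribution (\cref{equidistribution}). Assume (a) fails, and set $n_0:=n_{dich}(p,|\cS|,I,\ell_0)$ and $r_0:=r_{dich}(p,|\cS|,I,\ell_0)$. Then for every $\F_p$-vector space $W$ of dimension at least $n_0$, every projective $g\colon W\to\cS$, and every polynomial factor $\mathfrak{C}$ on $W$ with parameters $I$ and rank at least $r_0$, there is a generic $H$-instance in $(g,\mathfrak{C})$ for some $H\in\cH$.

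Let $\fU$ denote the polynomial factor on $V$ consisting of those homogeneous polynomials of $\fB'$ that do not appear in $\fB$, so $\fU$ has parameters $I'-I$ and $X=\fU^{-1}(0)$. First I would count ordered tuples $\bm y=(y_1,\ldots,y_{n_0})\in V^{n_0}$ with $U_{\bm y}:=\mathrm{span}(\bm y)\subseteq X$ (equivalently, $\fU(L^{n_0}_{\bm i}(\bm y))=0$ for every $\bm i\in\F_p^{n_0}$) and $y_1,\ldots,y_{n_0}$ linearly independent. By equidistribution applied to the system $\bm L^{n_0}$ and the factor $\fU$, with the zero tuple as the unique relevant consistent configuration, this count is at least $\tfrac{1}{2}|V|^{n_0}/|\Phi_{I'-I}(\bm L^{n_0})|$ once $r_{patch}$ and $n_{patch}$ are taken large enough in terms of $I'$, $\ell_0$, and $n_0$. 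Iterated application of \cref{hyperplane-rank} ensures that for most such $\bm y$ the restriction $\fB|_{U_{\bm y}}$ retains parameters $I$ and has rank at least $r_0$; only a negligible fraction of tuples are lost to rank failures or non-generic positions.

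For each surviving good subspace $U_{\bm y}\subseteq X$, dichotomy produces a generic $H$-instance $\bm z\in U_{\bm y}^{\ell}$ for some $H=(\bm L,\psi,\phi)\in\cH$ in $(f|_{U_{\bm y}},\fB|_{U_{\bm y}})$; since the number of patterns in $\cH$ with at most $\ell_0$ variables and labels in $A_I$ is bounded in terms of $(p,|\cS|,I,\ell_0)$, pigeonhole fixes one $H$ contributing such an instance for a bounded-below fraction of the good $\bm y$'s. Now double-count pairs $(\bm z,\bm y)$: the count is bounded below by the number of good $\bm y$ (up to this pigeonhole constant) and bounded above by $N^{*}\cdot M$, where $N^{*}$ is the number of generic $H$-instances in $(f,\fB)$ lying entirely in $X$ and $M$ is the maximum over $\bm z$ of the number of good $\bm y$ with $\mathrm{span}(\bm z)\subseteq U_{\bm y}$. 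By equidistribution applied to the linear system that extends a given $\ell$-dimensional subspace to an $n_0$-dimensional one inside $X$, one estimates $M\lesssim |\mathrm{GL}_{n_0}(\F_p)|\cdot |V|^{n_0-\ell}\cdot |\Phi_{I'-I}(\bm L^{\ell})|/|\Phi_{I'-I}(\bm L^{n_0})|$. Dividing $N^{*}/|V|^{\ell}$ by $\Lambda_{\bm L}(1_X,\ldots,1_X)\approx 1/|\Phi_{I'-I}(\bm L)|$ to convert to relative density, and using the full-dimensionality of $\bm L$ to identify $|\Phi_{I'-I}(\bm L)|=|\Phi_{I'-I}(\bm L^{\ell})|$, every factor of $|\Phi_{I'-I}|$ cancels, leaving a lower bound $\beta_{patch}$ that depends only on $p$, $|\cS|$, $I$, and $\ell_0$.

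The main obstacle is engineering this cancellation so that $\beta_{patch}$ is independent of $I'$: without the full-dimensionality hypothesis the ratio $|\Phi_{I'-I}(\bm L)|/|\Phi_{I'-I}(\bm L^{\ell})|$ does not simplify and the density bound would degrade as $I'$ grows. A secondary technical burden is bookkeeping the equidistribution error terms in the lower bound on good $\bm y$'s, in the upper bound on the multiplicity $M$, and in the denominator approximation, so that each is a small constant factor below its main term; this is what drives the required growth of the rank function $r_{patch}$ and the ambient dimension $n_{patch}$ (which must grow with $I'$, via \cref{high-rank-exists}, to make equidistribution on the factor $\fU$ effective).
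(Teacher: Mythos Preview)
Your overall plan is close in spirit to the paper's, but two steps as written do not go through.

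First, the rank control. You invoke iterated \cref{hyperplane-rank} to conclude that ``for most $\bm y$'' the restriction $\fB|_{U_{\bm y}}$ has rank at least $r_0$. That lemma only controls a single codimension-$1$ restriction, losing at most $p$ in rank; iterating it from $V$ down to an $n_0$-dimensional subspace costs $p(\dim V-n_0)$, which is unbounded. There is no ``typical restriction preserves rank'' statement available here. The paper circumvents this entirely: it fixes in advance a high-rank model factor $\fB_1$ on $\F_p^{n_1}$ via \cref{high-rank-exists}, and then uses equidistribution of $\fB'$ to count tuples $\bm x$ for which $\fB(L^{n_1}_{\bm i}(\bm x))=\fB_1(\bm i)$ for every $\bm i$. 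Any such subspace automatically has $\rank(\fB|_{U_{\bm x}})\ge r_0$, so no rank-of-restriction argument is needed.

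Second, and more seriously, your upper bound on the multiplicity $M$ cannot come from equidistribution. Equidistribution (\cref{equidistribution}) controls $\Pr_{\bm x\in V^k}(\fB'(L_i(\bm x))=a_i\text{ for all }i)$ with all variables free; it gives an average count, not a maximum over $\bm z$ with $\bm z$ held fixed. For a fixed $\bm z$ with $\mathrm{span}(\bm z)\subseteq X$, the number of extensions $\bm y$ with $\mathrm{span}(\bm z,\bm y)\subseteq X$ is governed by the polynomials of $\fU$ specialised at $\bm z$, and there is no uniform pointwise bound available. This is exactly why the paper replaces your first-moment double count by the Cauchy--Schwarz inequality
\[
|\fU_\ell|\ \ge\ \frac{\bigl(\sum_{U\in\fU_\ell} c(U)\bigr)^2}{\sum_{U\in\fU_\ell} c(U)^2},
\]
which turns the needed ``max over $\bm z$'' into a second moment $\sum c(U)^2$. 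That second moment is an unconditioned count of pairs of $n_1$-subspaces sharing an $\ell$-subspace inside $X$, hence expressible via a single doubled linear system $\bm L''$ to which equidistribution does apply; the identity $|\Phi_{I'-I}(\bm L^{\ell})|\cdot|\Phi_{I'-I}(\bm L'')|=|\Phi_{I'-I}(\bm L')|^2$ from \cref{CS-consistency} then produces the cancellation of all $I'$-dependent factors. Your cancellation bookkeeping is correct in outline, but it only becomes accessible after this Cauchy--Schwarz step.
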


Note that $n_{patch}$ may depend on $I'$, but critically $\beta_{patch}$ does not depend on $I'$.

\begin{proof}
First we define several parameters.

Write $r:=r_{dich}(p,|\cS|,I,\ell_0)$ for brevity. Define \[n_1:=\max\{n_{dich}(p,|\cS|,I,\ell_0), n_{high-rank}(p,I,r)\}\] where $n_0$ is defined in \cref{high-rank-exists}.

Define the constants\[n_{patch}(p,|\cS|, I',\ell_0):=2p^{n_1}\ceil{\log_p(\|I'\|)},\] and
\[\beta_{patch}(p,|\cS|,I,\ell_0):=\frac1{6400\ell_0^2p^{2n_1\cdot \ell}\|I\|^{2p^{n_1}}},\]
and define the non-increasing function $\alpha\colon\Zp\to(0,1)$ by \[\alpha(N):=\frac{1}{2N^{2p^{n_1}}},\]
and the non-decreasing function $r_{patch}(p,|\cS|,I,\ell_0)\colon\Zp\times\Zp\to\Zp$ by \[r_{patch}(p,|\cS|,I,\ell_0)(D,N):=r_{equi}(p,D,\alpha(N)).\]

We now proceed to the proof.  Let $\cH$ be a finite set of $\cS$-colored, $I$-labeled patterns each defined by a full dimension system of linear forms in at most $\ell_0$ variables. We apply \cref{dichotomy} to $\cH$. If \cref{dichotomy}(a) holds, then clearly conclusion (a) holds. Now assume that \cref{dichotomy}(b) holds. We wish to show conclusion (b).

Let $V$ be a finite-dimensional $\F_p$-vector space satisfying $\dim V\geq n_{patch}$, let $f\colon V\to\cS$ be a projective function, let $\fB$ be a polynomial factor on $V$ with parameters $I$ that satisfies $\rank\fB\geq r_{patch}(\|\fB\|)$, and let $\fB'$ be a polynomial factor on $V$ with parameters $I'$ that refines $\fB$ and satisfies $\rank \fB'\geq r_{patch}(\|\fB'\|)$.

Write $X:= \fB'^{-1}(A_I\times\{0\})$ and $n:=\dim V$. By assumption, $n\geq n_{patch}$. We wish to count $\cH$ instances in $(f,\fB)$ that are contained in $X$. Write $\cH=\bigsqcup_{\ell=1}^{\ell_0}\cH_\ell$ where $\cH_\ell$ is defined to be the subset of $\cH$ consisting of colored labeled patterns defined by a system of linear forms in exactly $\ell$ variables. We define sets $\fU_1,\ldots,\fU_{\ell_0}$ as follows. $\fU_\ell$ is the set of $\ell$-dimensional subspaces $U$ of $V$ which satisfy the following:
\begin{itemize}
    \item $U\subseteq X$;
    \item there exists a colored labeled pattern $H\in\cH_\ell$ in $\ell$ variables and a generic $H$-instance $x_1,\ldots,x_\ell\in U$.
\end{itemize}

Note that the requirement that $x_1,\ldots,x_{\ell}\in U$ are generic implies that $U=\spn\{x_1,\ldots,x_\ell\}$. Thus $\sum_{\ell=1}^{\ell_0}|\fU_\ell|$ is a lower bound on the number of $\cH$-instances in $X$.

Define the following counting function $c\colon\fU_\ell\to\Znn$ for each $\ell\in[\ell_0]$ as follows. For $U\in\fU_\ell$, let $c(U)$ be the number of $n_1$-dimensional subspaces that contain $U$ and are contained in $X$. An application of the Cauchy-Schwarz inequality implies that
\begin{equation}
\label{density-bound}
|\fU_\ell|\geq\frac{\paren{\sum_{U\in\fU_\ell} c(U)}^2}{\sum_{U\in\fU_\ell} c(U)^2}.
\end{equation}

Define $S_1$ to be the number of $n_1$-dimensional subspaces $W$ of $V$ such that $W\subseteq X$ and $\rank(\fB|_W)\geq r$. By \cref{dichotomy}(b), every such $W$ contains a generic $\cH$-instance. Thus \[\sum_{\ell=1}^{\ell_0}\sum_{U\in\fU_\ell}c(U)\geq S_1.\] By the pigeonhole principle, there exists some $\ell\in[\ell_0]$ such that
\begin{equation}
\label{lower-bound-1}
\sum_{U\in\fU_\ell}c(U)\geq \frac{S_1}{\ell_0}.
\end{equation}
We fix such a value of $\ell\in[\ell_0]$ for the rest of the proof.

Define $S_2$ to be the number of ordered $n_1$-tuples $(x_1,\ldots,x_{n_1})\in V^{n_1}$ such that $x_1,\ldots,x_{n_1}$ are linearly independent, $\spn\{x_1,\ldots,x_{n_1}\}\subseteq X$, and $\rank(\fB|_{\spn\{x_1,\ldots,x_{n_1}\}})\geq r$. We can compute
\begin{equation}
\label{lower-bound-2}
S_2=S_1\prod_{i=0}^{n_1-1}(p^{n_1}-p^i)\leq p^{n_1^2}S_1.
\end{equation}

Define $S_3$ to be the number of ordered $n_1$-tuples $(x_1,\ldots,x_{n_1})\in V^{n_1}$ such that $\spn\{x_1,\ldots,x_{n_1}\}\subseteq X$ and $\rank(\fB|_{\spn\{x_1,\ldots,x_{n_1}\}})\geq r$. We can easily bound
\begin{equation}
\label{lower-bound-3}
S_2\geq S_3-p^{n\cdot n_1}p^{n_1-n}.
\end{equation}

By the definition of $n_1$ and \cref{high-rank-exists}, there exists a polynomial factor $\fB_1$ on $\F_p^{n_1}$ with parameters $I$ and rank at least $r$. Define $S_4$ to be the number of ordered $n_1$-tuples $(x_1,\ldots,x_{n_1})\in V^{n_1}$ such that $\fB_1(i_1,\ldots,i_{n_1})=\fB(i_1x_1+\cdots+i_{n_1}x_{n_1})$ and $\spn\{x_1,\ldots,x_n\}\subseteq X$. Notice that $S_3\geq S_4$.

Write $\bm L':=\bm L^{n_1}$, the system of $p^{n_1}$ linear forms in $n_1$ variables that define an $n_1$-dimensional subspace (see \cref{L}).
By definition, $(P^i_{d,k}(i_1,\ldots,i_{n_1}))_{\bm i\in\F_p^{n_1}}\in\Phi_{d,k}(\bm L')$ for every $(d,k)\in D_p$ and $i\in[I_{d,k}]$ where $P^i_{d,k}$ is the $i$th non-classical polynomial of degree $d$ and depth $k$ defining $\fB_1$. Also, $(0,\ldots,0)\in\Phi_{d,k}(\bm L')$ for all $(d,k)\in D_p$. Define $\bm a\in A_{I'}^{\F_p^{n_1}}$ by
\[(a_{\bm i})_{d,k}^i =
\begin{cases}
P^i_{d,k}(i_1,\ldots,i_{n_1}) \quad & \text{if }i\leq I_{d,k}, \\
0 &\text{if }I_{d,k}<i\leq I'_{d,k}.
\end{cases}
\]
By the above discussion, $\bm a$ is $\bm L'$-consistent, so by \cref{equidistribution} and the rank assumption of $\fB'$, we find
\begin{equation}
\label{lower-bound-4}
\begin{split}
S_3\geq S_4
& =\abs{\{ \bm x\in V^{n_1}:\fB'(L'_{\bm i}(\bm x))=a_{\bm i}\text{ for all }\bm i\in\F_p^{n_1}\}} \\
& \geq\paren{\frac{1}{|\Phi_{I'}(\bm L')|}-\alpha(\|I'\|)}p^{n\cdot n_1}\geq\frac{p^{n\cdot n_1}}{2|\Phi_{I'}(\bm L')|}.
\end{split}
\end{equation}

Combining \cref{lower-bound-1}, \cref{lower-bound-2}, \cref{lower-bound-3}, and \cref{lower-bound-4}, we conclude
\begin{equation}
\label{lower-bound}
\sum_{U\in\fU_\ell}c(U)\geq \frac1{\ell_0 p^{n_1^2}}p^{n\cdot n_1}\paren{\frac1{2|\Phi_{I'}(\bm L')|}-p^{n_1-n}}\geq\frac{1}{4\ell_0p^{n_1^2}|\Phi_{I'}(\bm L')|}p^{n\cdot n_1}.
\end{equation}

Next we find an upper bound on $\sum_{U\in\fU_\ell} c(U)^2$. Define $T_1$ to be the number of triples $(U,W,W')$ where $U$ is a $\ell$-dimensional subspace of $V$ and $W,W'$ are both $n_1$-dimensional subspaces of $V$ that contain $U$ and are contained in $X$. First note that
\begin{equation}
\label{upper-bound-1}
\sum_{U\in\fU_\ell}c(U)^2\leq T_1.
\end{equation}

Now define $T_2$ to be the number of ordered $(2n_1-\ell)$-tuples $(x_1,\ldots,x_\ell,y_1,\ldots,y_{n_1-\ell},z_1,\ldots,z_{n_1-\ell})\in V^{2n_1-\ell}$ such that $x_1,\ldots,x_\ell,y_1,\ldots,y_{n_1-\ell}$ are linearly independent, $x_1,\ldots,x_\ell,z_1,\ldots,z_{n_1-\ell}$ are linearly independent, $\spn\{x_1,\ldots,x_\ell,y_1,\ldots,y_{n_1-\ell}\}\subseteq X$, and $\spn\{x_1,\ldots,x_\ell,z_1,\ldots,z_{n_1-\ell}\}\subseteq X$. We compute
\begin{equation}
\label{upper-bound-2}
T_2=\paren{\prod_{i=0}^{\ell-1}(p^\ell-p^i)}\paren{\prod_{i=\ell}^{n_1-1}(p^{n_1}-p^i)}^2 T_1\geq \frac{p^{2n_1(n_1-\ell)}}{100} T_1.
\end{equation}

Next define $T_3$ to be the number of ordered $(2n_1-\ell)$-tuples $(x_1,\ldots,x_\ell,y_1,\ldots,y_{n_1-\ell},z_1,\ldots,z_{n_1-\ell})\in V^{2n_1-\ell}$ such that $\spn\{x_1,\ldots,x_\ell,y_1,\ldots,y_{n_1-\ell}\}\subseteq X$ and $\spn\{x_1,\ldots,x_\ell,z_1,\ldots,z_{n_1-\ell}\}\subseteq X$. Clearly $T_2\leq T_3$.

Define $\bm L''$ to be the following system of $2p^{n_1}-p^\ell$ linearly forms in $2n_1-n_\ell$ variables. For $\bm i\in\F_p^\ell$, define
\[L''_{\bm i}(x_1,\ldots,x_\ell,y_1,\ldots,y_{n_1-\ell},z_1,\ldots,z_{n_1-\ell}):=i_1x_1+\cdots+i_\ell x_\ell.\]
For $\bm i\in\F_p^{n_1}\setminus\paren{\F_p^{\ell}\times\{0\}^{n_1-\ell}}$, define
\begin{align*}
L''_{\bm i,1}(x_1,\ldots,x_\ell,y_1,\ldots,y_{n_1-\ell},z_1,\ldots,z_{n_1-\ell})
& :=i_1x_1+\cdots+i_\ell x_\ell+i_{\ell+1}y_1+\cdots+i_{n_1}y_{n_1-\ell}, \\
L''_{\bm i,2}(x_1,\ldots,x_\ell,y_1,\ldots,y_{n_1-\ell},z_1,\ldots,z_{n_1-\ell})
& :=i_1x_1+\cdots+i_\ell x_k+i_{\ell+1}z_1+\cdots+i_{n_1}z_{n_1-\ell}.
\end{align*}

Now let $\fB''$ be the polynomial factor on $V$ with parameters $I'-I$ defined by homogeneous non-classical polynomials
\[\paren{P^i_{d,k}}_{\genfrac{}{}{0pt}{}{(d,k)\in D_p}{I_{d,k}<i\leq I_{d,k}'}}\] where $P^i_{d,k}$ is the $i$th non-classical polynomial of degree $d$ and depth $k$ that defines $\fB'$. The important property of this polynomial factor is that $X=\fB''^{-1}(\bm 0)$. Also note that $\rank\fB''\geq\rank\fB'\geq r(\|\fB'\|)$. By \cref{equidistribution} and the bounds on $\rank \fB''$, we find
\begin{equation}
\label{upper-bound-3}
\begin{split}
T_2\leq T_3
& =\abs{\{\bm x\in V^{2n_1-\ell}:\fB''(\bm L''(\bm x))=\bm 0\}} \\ & \leq\paren{\frac{1}{|\Phi_{I'-I}(\bm L'')|}+\alpha(\|I'\|)}p^{n(2n_1-\ell)}\leq\frac{2p^{n(2n_1-\ell)}}{|\Phi_{I'-I}(\bm L'')|}.
\end{split}
\end{equation}

Combining \cref{upper-bound-1}, \cref{upper-bound-2},  and \cref{upper-bound-3}, we conclude
\begin{equation}
\label{upper-bound}
\sum_{U\in\fU_\ell}c(U)^2\leq \frac{200p^{2n_1\cdot \ell}}{p^{2n_1^2}|\Phi_{I'-I}(\bm L'')|}p^{n(2n_1-\ell)}.
\end{equation}

Finally, we combine \cref{density-bound}, \cref{lower-bound}, and \cref{upper-bound} to find
\begin{align*}
|\fU_\ell|
& \geq \frac{|\Phi_{I'-I}(\bm L'')|}{3200\ell_0^2p^{2n_1\cdot \ell}|\Phi_{I'}(\bm L')|^2}p^{n\cdot \ell}.
\end{align*}

Consider $\bm L^\ell$, the system of $p^\ell$ linear forms in $\ell$ variables that define an $\ell$-dimension subspace (see \cref{L}).
By \cref{CS-consistency}, we have $|\Phi_{I'-I}(\bm L^\ell)|\cdot|\Phi_{I'-I}(\bm L'')|=|\Phi_{I'-I}(\bm L')|^2$. Thus the above expression simplifies to
\[|\fU_\ell|\geq\frac{1}{3200\ell_0^2p^{2n_1\cdot \ell}|\Phi_I(\bm L')|^2}\cdot\frac{p^{n\cdot \ell}}{|\Phi_{I'-I}(\bm L^\ell)|}\geq\frac{1}{3200\ell_0^2p^{2n_1\cdot \ell}\|I\|^{2p^{n_1}}}\cdot\frac{p^{n\cdot \ell}}{|\Phi_{I'-I}(\bm L^\ell)|}.\]

Therefore there exists some colored labeled pattern $H=(\bm L,\psi,\phi)\in\cH_\ell$ where $\bm L$ is a full dimension system of linear forms in $\ell$ variables and such that the number of generic $H$-instances in $(f,\fB)$ which are contained in $X$ is at least $1/|\cH_\ell|$ times the right-hand side of the above equation. Note that by equidistribution, \cref{equidistribution}, and the rank bound on $\fB''$,
\begin{equation}
\label{density-upper}
\Lambda_{\bm L}(1_X,\ldots,1_X) \leq \frac{1}{|\Phi_{I'-I}(\bm L)|}+\alpha(\|I'\|)\leq\frac{2}{|\Phi_{I'-I}(\bm L)|}.
\end{equation}. Noting that since $\bm L$ is full dimensional, we have $|\Phi_{I'-I}(\bm L)|=|\Phi_{I'-I}(\bm L^\ell)|$. Thus dividing the two above quantities, we find that the relative density of the above $H$ in $X$ is at least
\[\frac1{6400\ell_0^2p^{2n_1\cdot \ell}\|I\|^{2p^{n_1}}}=\beta_{patch}(p,|\cS|,I,\ell_0).\qedhere\]
\end{proof}


\section{Proof of removal lemmas}
\label{sec-proof-of-removal-lemmas}

As usual, for an atom $a\in A_I$ (defined in \cref{A}), we use $a_{d,k}\in\U_{k+1}^{I_{d,k}}$ to denote the degree $d$, depth $k$ part of $a$. We use the notation $\tilde A_I\subset A_I$ to denote the set $\tilde A_I:=\{a\in A_I:a_{1,0}=0\}$. This is the set of atoms that are regularized by \cref{subatom-selection}. Also define $\tilde I\in\cI_p$ by $\tilde I_{1,0}=0$ and $\tilde I_{d,k}=I_{d,k}$ otherwise.

\begin{defn}
Fix a prime $p$, a finite set $\cS$ equipped with an $\Fpx$-action, and a parameter list $I\in\cI_p$. A \textbf{summary function} with parameters $I$ is a pair $(F,\xi)$ consisting of a function $F\colon (A_I\setminus\tilde A_I)\to 2^{\cS}\setminus\{\emptyset\}$ and a projective function $\xi\colon \F_p\times A_{\tilde I}\to\cS$.
\end{defn}

\begin{defn}
For an $\cS$-colored pattern $H=(\bm L,\psi)$ consisting of $m$ linear forms and a summary function $(F,\xi)$ with parameters $I$, say that \textbf{$(F,\xi)$ partially induces $H$} if there exists a tuple of atoms $\bm a\in A_I^m$ such that the following holds:
\begin{enumerate}[(i)]
    \item $\bm a$ is $\bm L$-consistent, i.e., $\bm a\in\Phi_{I}(\bm L)$;
    \item for each $i\in[m]$ such that $a_i\not\in \tilde A_I$, we have $\psi(i)\in F(a_i)$;
    \item defining $J:=\{i\in[m]:a_i\in\tilde A_i\}$ and $H_J:=((L_i)_{i\in J},\psi|_J,(a_i)_{i\in J}\}$, an $\cS$-colored $I$-labeled pattern, we have $\xi$ canonically induces $H_J$.
\end{enumerate}
\end{defn}

\begin{proof}[Proof of \cref{removal-lemma-proj}]
We are given a parameter $\epsilon>0$ and a possibly infinite set $\cH$ of $\cS$-colored patterns over $\F_p$ of the form $(\ol{\bm L}^\ell,\psi)$ where $\ell$ is some positive integer and $\psi\colon E_\ell\to\cS$ is some map. (See \cref{L} for the definition of $\ol{\bm L}^\ell$ and $E_\ell$.)

We begin with a ``compactness argument'' based on ideas of Alon and Shapira that allows us to reduce to the case when $\cH$ is finite size.

For each parameter list $I\in\cI_p$, we define a finite subset $\cH_I\subseteq\cH$ as follows. Consider the set of all summary functions $(F,\xi)$ with parameters $I$. If there exists any $H\in\cH$ such $(F,\xi)$ partially induces $H$, include one such $H$ in $\cH_I$. Note that $|\cH_I|$ is at most the number of summary functions with parameters $I$, which is finite.

Define the compactness functions $\Psi_{\cH}\colon\Zp\times\Zp\to\Zp$ as follows. Let $\Psi_{\cH}(D,N)$ be the largest positive integer $\ell$ such that there exists a parameter list $I\in\cI_p$ satisfying $\deg I\leq D$ and $\|I\|\leq N$ such that a pattern of the form $(\ol{\bm L}^\ell,\psi)$ exists in $\cH_{I}$.

Now we set several parameters.
Define non-increasing functions $\eta\colon\Zp\times\Zp\to\Zp$ by
\[\eta(D,N):=\frac{1}{40}\paren{\frac{\epsilon}{12N|\cS|}}^{p^{\Psi_\cH(D,N)}},\]
$\beta\colon\Zp\times\Zp\to\Zp$ by
\[\beta(D,N):=\min_{I\in\cI_p:\deg I\leq D,\|I\|\leq N}\beta_{patch}(p,|\cS|,I,\Psi_\cH(D,N)),\] $\theta\colon\Zp\times\Zp\to\Zp$ by
\[\theta(D,N):=\frac{\beta(D,N)}{40}\paren{\frac{\epsilon}{8|\cS|}}^{p^{\Psi_\cH(D,N)}},\]
and $\alpha\colon\Zp\times\Zp\to(0,1)$ by
\[\alpha(D,N):=\frac{\beta(D,N)}{10N^{2p^{\Psi_\cH(D,N)}}}\]
Then define non-decreasing functions $r\colon \Zp\times\Zp\to\Zp$ by
\[r(D,N):=\max\{r_{equi}(p,D,\alpha(D,N)),\max_{I\in\cI_p:\deg I\leq D,\|I\|\leq N}r_{patch}(p,|\cS|,I,\Psi_{\cH}(D,N))(D,N)+p\ceil{\log_pN}\}\]
and $d\colon\Zp\times\Zp\to\Zp$ by
\[d(D,N):=p^{\Psi_\cH(D,N)}.\]
Define parameters
\[\zeta:=\frac{\epsilon}{16|\cS|}\qquad\text{and}\qquad c_0:=\ceil{\log_p(2/\epsilon)}.\]
Then define
\[C_{max}:=C_{reg}(p,|\cS|,c_0,\zeta,\eta,\theta,d,r),\] \[D_{max}:=D_{reg}(p,|\cS|,c_0,\zeta,\eta,\theta,d,r),\]
\[n_{min}:=\max\left\{n_{reg}(p,c_0,\zeta),\max_{I\in\cI_p\colon\deg I\leq D_{max},\|I\|\leq C_{max}}n_{patch}(p,|\cS|,I,\Psi_\cH(D_{max},C_{max}))+\ceil{\log_pC_{max}}\right\}.\]
Finally, define
\[\delta(\epsilon,\cH):=\min\left\{\frac{\beta(D_{max},C_{max})}{40}\paren{\frac{\epsilon}{4C_{max}|\cS|}}^{p^{\Psi_\cH(D_{max},C_{max})}},p^{-n_{min}\cdot \Psi_\cH(D_{max},C_{max})}\right\}\]
and
\[\cH_{\epsilon}:=\bigcup_{I\in\cI_p: \|I\|\leq C_{max},\,\deg I\leq D_{max}}\cH_I.\]
Since the union is over a finite set of $I$, we have $\cH_{\epsilon}$ is finite. We will show that this choice of $\delta$ and $\cH_{\epsilon}$ satisfies the desired conclusion.

Let $V$ be a finite-dimensional $\F_p$-vector space and $f\colon V\to\cS$ be a projective function with $H$-density at most $\delta(\epsilon,\cH)$ for every $H\in\cH_\epsilon$. Our goal is to produce a projective recoloring $g\colon V\to\cS$ that agrees with $f$ on all but an at most $\epsilon$-fraction of $V$ that has no generic $H$-instances for every $H\in\cH$.

First note that if $\dim V<n_{min}$, the theorem is easily follows. This is because for the pattern $H=(\ol{\bm L}^\ell,\psi)$, if there exists an $H$-instance in $f$, then the $H$-density in $f$ is at least $1/|V|^{\ell}$. Thus taking $g=f$ and noticing that we chose $\delta(\epsilon,\cH)\leq p^{-n_{min}\cdot \Psi_{\cH}(C_{max},D_{max})}$, the theorem holds in this case.

Now assume that $\dim V\geq n_{min}$. We apply \cref{subatom-selection} to the functions $\{1_{f^{-1}(c)}\}_{c\in\cS}$ with parameters $p,|\cS|,c_0,\zeta,\eta,\theta,d,r$. This produces a polynomial factor $\fB$ and a refinement $\fB'$ both on $V$ with parameters $I$ and $I'$ and a subatom selection function $s\colon A_I\to A_{I'}$ satisfying several other desirable properties.

As above, define $\tilde A_I\subset A_I$ to be the set $\tilde A_I:=\{a\in A_I:a_{1,0}= 0\}$. We call the atoms $a\in\tilde A_I$ \emph{irregular} and the remaining atoms $a\in A_I\setminus \tilde A_I$ \emph{regular}. Define $\tilde V$ to be the codimension-$I_{1,0}$ subspace of $V$ that is the common zero set of all $I_{1,0}$ linear polynomials defining $\fB$. The irregular atoms of $\fB$ exactly consist of $\tilde V$, i.e., $\fB^{-1}(\tilde A_I)=\tilde V$.

Define $\tilde\fB$ to be the polynomial factor on $\tilde V$ defined by the restrictions of the homogeneous non-classical polynomials that define $\fB$ to $\tilde V$, except for the linear polynomials (which restrict to the zero function). Let $\tilde I\in\cI_p$ be the parameter list of $\tilde\fB$ ($\tilde I_{1,0}=0$ and $\tilde I_{d,k}=I_{d,k}$ otherwise). Also define $\tilde\fB'$ to be the polynomial factor on $\tilde V$ defined by the restrictions of the homogeneous non-classical polynomial that define $\fB'$ to $\tilde V$, except for the linear polynomials that also define $\fB$. Let $\tilde I'\in\cI_p$ be the parameter list of $\tilde\fB'$ ($\tilde I'_{1,0}=I'_{1,0}-I_{1,0}$ and $\tilde I'_{d,k}=I'_{d,k}$ otherwise). Note that by \cref{hyperplane-rank} and our definition of $r$, we have
\[\rank\tilde\fB\geq r_{patch}(p,|\cS|,\tilde I,\Psi_\cH(\deg\fB,\|\fB\|))(\deg\fB,\|\fB\|),\]
\[\rank\tilde\fB'\geq r_{patch}(p,|\cS|,\tilde I,\Psi_\cH(\deg\fB,\|\fB\|))(\deg\fB',\|\fB'\|).\]

We will ``clean up'' the regular atoms by removing low-density colors in a projective manner. We will ``patch'' the irregular atoms by replacing the coloring by a new coloring $\Xi_{\xi,\iota,\tilde\fB}$ for some projective $\xi\colon \F_p\times A_{\tilde I}\to\cS$ and $\iota\colon \tilde V\xrightarrow{\sim}\F_p^{\dim \tilde V}$.

Note that to check that the recoloring $g\colon V\to\cS$ is projective, it suffices to check this fact separately on $\tilde V$ and on $V\setminus\tilde V$.

\textbf{Clean up regular atoms:}
For each $a\in (A_I\setminus\tilde A_I)$, say that a color $c\in\cS$  is \emph{high-density in $a$} if it appears in $\fB'^{-1}(s(a))$ with density at least $\epsilon/(4|\cS|)$. Say that a color is \emph{low-density in $a$} otherwise.

First note that a basic property of subatom selection functions, \cref{sas-properties}(ii), is the following. For $a\in A_I$ and $b\in\Fpx$, we have $b\cdot s(a)=s(b\cdot a)$. Combined with the projectiveness of $f$, this implies that for a color $c\in\cS$ and $b\in\Fpx$, the $c$-density in $\fB'^{-1}(s(a))$ is the same as the $(b\cdot c)$-density in $\fB^{-1}(s(b\cdot a))$. Thus a color $c$ is high-density in $a$ if and only if $b\cdot c$ is high density in $b\cdot a$.

We pick a single high-density color $c_a\in\cS$ for each regular atom $a\in(A_I\setminus\tilde A_I)$ in a projective way, namely such that $b\cdot c_a= c_{b\cdot a}$ for all $a\in(A_I\setminus\tilde A_I)$ and $b\in\Fpx$. By the argument in the above paragraph, this is possible.

Now we define our recoloring of the regular atoms, $g\colon(V\setminus\tilde V)\to\cS$ as follows. For each $a\in(A_I\setminus\tilde A_I)$ and $x\in\fB^{-1}(a)$, we define $g(x):=f(x)$ unless $f(x)$ is low-density in $a$, in which case we define $g(x):=c_a$. Note that $g$ is a projective function. Furthermore we claim that $g$ differs from $f$ on at most an $(\epsilon/2)$-fraction of $V$.

By \cref{subatom-selection}(vi), for all but at most a $\zeta$-fraction of $a\in A_I$, the $c$-density in $\fB^{-1}(a)$ and the $c$-density in $\fB'^{-1}(s(a))$ differ by at most $\zeta$ for all $s\in\cS$. Thus for most atoms $a$, each low-density color appears in $\fB^{-1}(a)$ with density at most $\epsilon/(4|\cS|)+\zeta$, so $f$ and $g$ differ on at most an $(\epsilon/4+\zeta|\cS|)$-fraction of these atoms. The functions $f$ and $g$ may differ completely on the other atoms, but there are at most $\zeta\|\fB\|$ of these. It follows by equidistribution, \cref{equidistribution}, and the rank bound on $\fB$ that each atom of $\fB$ is at most an $(\|\fB\|^{-1}+\alpha(\deg\fB,\|\fB\|))$-fraction of $V$. Putting this all together, we see that $g$ differs from $f$ on at most the following fraction of $V$ \[\zeta\|\fB\|\paren{\frac{1}{\|\fB\|}+\alpha(\deg\fB,\|\fB\|)}+\left(\frac{\epsilon}{4|\cS|}+\zeta\right)|\cS| <\frac{\epsilon}{2}.\]

\textbf{Patch irregular atoms:}
We define $\tilde\cH$ to be the set of all $\cS$-colored $I$-labeled patterns that are defined by a full dimension system of linear forms in at most $\Psi_\cH(\deg I,\|I\|)$ variables and whose relative density in $\tilde\fB'^{-1}(A_{\tilde I}\times\{0\})$ is less than $\beta_{patch}(p,|\cS|,\tilde I,\Psi_{\cH}(\deg I,\|I\|))$.

We apply our patching result, \cref{patching}, to the set $\tilde\cH$. Our definitions are exactly such that $f|_{\tilde V}$ with $\tilde\fB,\tilde\fB'$ demonstrate that \cref{patching}(b) does not hold. In particular, we checked the rank assumptions on $\tilde\fB$ and $\tilde\fB'$ above when they were defined. Furthermore, we assumed that $\dim V\geq n_{min}$, which implies that $\dim\tilde V\geq n_{patch}(p,|\cS|,\tilde I',\Psi_{\cH}(\deg I,\|I\|)$. Finally we defined $\tilde\cH$ to be the set of patterns which appear with very low density in $\tilde\fB'^{-1}(A_{\tilde I}\times\{0\})$. Thus we conclude that \cref{patching}(a) holds. This means that there exists a projective $\xi\colon\F_p\times A_{\tilde I}\to\cS$ that does not canonically induce $\tilde\cH$. In particular, this means that for any fixed isomorphism $\iota\colon \tilde V\xrightarrow{\sim}\F_p^{\dim \tilde V}$, there are no generic $H$-instances in $(\Xi_{\xi,\iota,\tilde\fB})$ for any $H\in\tilde\cH$.

We complete our definition of $g\colon V\to\cS$ by defining $g(x):=\Xi_{\xi,\iota,\tilde\fB}(x)$ for all $x\in\tilde V$. To conclude this portion of the proof, we make sure that $|\tilde V|\leq(\epsilon/2)|V|$. By assumption, \[|\tilde V|/|V|=p^{-I_{1,0}}\leq p^{-c_0}=\epsilon/2,\]as desired.

\textbf{Proof of correctness:} We claim that $g$ has no generic $H$-instances for each $H\in\cH$. Define $F\colon \tilde (A_I\setminus\tilde A_I)\to 2^{\cS}\setminus\{0\}$ to map $a$ to the set of high-density colors in $a$ and recall the projective function $\xi\colon\F_p\times A_{\tilde I}\to\cS$ defined in the ``patch irregular atoms'' section. Now suppose that the desired conclusion does not hold, i.e., that there is a generic $H'$-instance in $g$ for some $H'\in\cH$. By the construction of $g$, this means that $(F,\xi)$ partially induces $H'$. By the definition of the $\cH_I$, this means that there is some $H\in\cH_I\subseteq\cH_{\epsilon}$ so that $(F,\xi)$ also partially induces $H$. We will reach a contradiction by showing that this implies that the $H$-density in $f$ is larger than $\delta(\epsilon,\cH)$.

Say that $H=(\ol{\bm L}^\ell,\psi)$ (note that $\ell\leq\Psi_\cH(\deg I,\|I\|)$). Since $(F,\xi)$ partially induces $H$, this implies that there exists a tuple of atoms $\bm a\in \Phi_I(\ol{\bm L}^\ell)\subseteq A_I^{E_\ell}$ with several desirable properties. Define $J:=\{i\in{E_\ell}:a_i\in\tilde A_I\}$. Recalling that $\tilde A_I$ is just the set of atoms whose linear part is zero, we can conclude that  $J\subseteq E_{\ell}\subset\F_p^\ell$ is the intersection of $E_{\ell}$ with some linear subspace $U\leq \F_p^\ell$ of dimension $\ell'\leq\ell$. This means that the system $(L^{\ell}_i)_{i\in J}$ is equivalent to the system $(L^{\ell'}_i)_{i\in J}$ where now we view $J\subset U\simeq\F_p^{\ell'}$. Define, $H_J:=((L^{\ell'}_i)_{i\in {E_\ell}},\psi|_J,(a_i)_{i\in J})$, an $\cS$-colored $I$-labeled pattern. By \cref{full-dimensional-exists}, we see that $H_j$ is a full dimension pattern.

The first property, that $\bm a$ is $\ol{\bm L}^{\ell}$-consistent, implies that $s(\bm a)$ is also $\ol{\bm L}^{\ell}$-consistent, by \cref{sas-properties}(iii). The second property implies that for each $i\in(E_\ell\setminus J)$, the color $\psi(i)$ is high-density in $a_i$. The third property, together with our definition of $\xi$ implies that in $(f,\fB)$, the relative density of $H_J$ in $\tilde\fB'^{-1}(A_{\tilde I}\times\{0\})$ is at least $\beta_{patch}(p,|\cS|,\tilde I,\Psi_{\cH}(\deg I,\|I\|)\geq \beta(\deg I,\|I\|))$.

Now we put everything together as follows. Write $f^{(i)}$ for $1_{f^{-1}(\psi(i))}$. There is a decomposition $f^{(i)}=f^{(i)}_{str}+f^{(i)}_{sml}+f^{(i)}_{psr}$ given by \cref{subatom-selection} for each $i\in{E_\ell}$. Let $p_1$ be the $H$-density in $f$. We lower bound $p_1$ as follows.
\begin{align*}
p_1 
& =\E_{\bx}\left[\prod_{i\in{E_\ell}} f^{(i)}(L^\ell_i(\bx))\right]\\
& =\E_{\bx}\left[\prod_{i\in J} f^{(i)}(L^\ell_i(\bx))\prod_{i\in{E_\ell}\setminus J}\left( f^{(i)}_{str}(L^\ell_i(\bx))+f^{(i)}_{sml}(L^\ell_i(\bx))+f^{(i)}_{psr}(L^\ell_i(\bx))\right)\right]\\
&\geq \E_{\bx}\left[\prod_{i\in J} f^{(i)}(L^\ell_i(\bx))\prod_{i\in{E_\ell}\setminus J}\left( f^{(i)}_{str}(L^\ell_i(\bx))+f^{(i)}_{sml}(L^\ell_i(\bx))\right)\right]-3^{|E_\ell|}\eta(\deg\fB',\|\fB'\|).
\end{align*}
The inequality follows from \cref{subatom-selection}(iii), the counting lemma (\cref{counting-lemma}), and the fact that the complexity of $H$ is at most $d(\deg I,\|I\|)=p^{\Psi_{\cH}(\deg I,\|I\|)}$.

Write $p_2$ for the expectation in the last line above. By \cref{subatom-selection}(iv), the expression inside the expectation is non-negative so we can restrict the expectation to $\bx$ such that $\ol{\bm L}^{\ell}(\bx)=s(\bm a)$. Thus
\[p_2\geq \E_{\bx}\left[\prod_{i\in J} f^{(i)}(L^\ell_i(\bx))1_{\fB'^{-1}(s(a_i))}(L^\ell_i(\bx))\prod_{i\in{E_\ell}\setminus J}\left( f^{(i)}_{str}(L^\ell_i(\bx))+f^{(i)}_{sml}(L^\ell_i(\bx))\right)1_{\fB'^{-1}(s(a_i))}(L^\ell_i(\bx))\right].\]

Write $p_3$ for the expectation in the last line above. Expanding the product, there are at most $2^{|E_\ell|}$ terms involving $f^{(j)}_{sml}$ for some $j\in {E_\ell}\setminus J$. Each of these is bounded in magnitude by \[\E_{\bx}\left[\abs{f^{(j)}_{sml}(L_j(\bx))}\prod_{i\in E_\ell}1_{\fB'^{-1}(s(a_i))}(L^\ell_i(\bx))\right].\] By applying a change of coordinates, we can assume that $L^{\ell}_j(\bx)=x_1$. Then by the Cauchy-Schwarz inequality, the square of the above expression is bounded by\[\E_{x_1}\left[\abs{f^{(j)}_{sml}(x_1)}^2 1_{\fB'^{-1}(s(a_j))}(x_1)\right]\E_{x_1}\left[1_{\fB'^{-1}(s(a_j))}(x_1)\E_{x_2,\ldots,x_\ell}\left[\prod_{i\neq j}1_{\fB'^{-1}(s(a_i))}(L^\ell_i(\bx))\right]^2\right].\]
The first term is at most $\theta(\deg\fB, \|\fB\|)^2\|1_{\fB'^{-1}(s(a_j))}\|_2^2$ by \cref{subatom-selection}(vi) and the fact that $a_j$ is a regular atom for $j\in{E_\ell}\setminus J$. The second term can be counted by equidistribution applied to the system $\bm L'$ of $2|E_k|-1$ linear forms in $2\ell-1$ variables defined as follows. Set \[L'_{j}(x_1,x_2,\ldots,x_\ell,x'_2,\ldots,x'_\ell):=x_1,\]
and for $i\in{E_\ell}\setminus\{j\}$, define
\[L'_{i,1}(x_1,x_2,\ldots,x_\ell,x'_2,\ldots,x'_\ell):=L^\ell_i(x_1,x_2\ldots,x_\ell),\]
\[L'_{i,2}(x_1,x_2,\ldots,x_\ell,x'_2,\ldots,x'_\ell):=L^\ell_i(x_1,x_2',\ldots,x_\ell').\]
By \cref{CS-consistency}, we know that $\|\fB'\|\cdot|\Phi_{I'}(\bm L')|=|\Phi_{I'}(\ol{\bm L}^\ell)|^2$ (see also \cite[Lemma 5.13]{BFHHL13}.)

Thus by equidistribution (\cref{equidistribution}) and the rank bound on $\fB'$, we have the second term is at most
\[\frac{1}{|\Phi_{I'}(\bm L')|}+\alpha(\deg\fB',\|\fB'\|)=\frac{\|\fB'\|}{|\Phi_{I'}(\ol{\bm L}^\ell)|^2}+\alpha(\deg\fB',\|\fB'\|)\leq\frac{2\|\fB'\|}{|\Phi_{I'}(\ol{\bm L}^\ell)|^2}.\]
Applying equidistribution again we have that the first term is at most
\[\theta(\deg\fB,\|\fB\|)^2\paren{\frac{1}{\|\fB'\|}+\alpha(\deg\fB',\|\fB'\|)}\leq\frac{2\theta(\deg\fB,\|\fB\|)^2}{\|\fB'\|}.\]
Combining these bounds and summing over all terms that contain some $f^{j}_{sml}$, we see that
\[p_3\geq \E_{\bx}\left[\prod_{i\in J}f^{(i)}(L^\ell_i(\bx))1_{\fB'^{-1}(s(a_i))}(L^\ell_i(\bx))\prod_{i\in{E_\ell}\setminus J}f^{(i)}_{str}(L^\ell_i(\bx))1_{\fB'^{-1}(s(a_i))}(L^\ell_i(\bx))\right]-2^{|E_\ell|+1}\frac{\theta(\deg\fB,\|\fB\|)}{|\Phi_{I'}(\ol{\bm L}^\ell)|}.\]

Write $p_4$ for the expectation above. The quantity $f^{(i)}_{str}(L^\ell_i(\bm x))$ is the density of $\psi(i)$ in the atom of $\fB'$ that $L^\ell_i(\bm x)$ lies in. When $\fB(L^\ell_i(\bm x))=s(a_i)$, the fact that $\psi(i)$ is high density in $a_i$ for all $i\in{E_\ell}\setminus J$ implies that\[p_4\geq \paren{\frac{\epsilon}{4|\cS|}}^{|E_\ell|}\E_{\bx}\left[\prod_{i\in J}f^{(i)}(L^\ell_i(\bx))1_{\fB'^{-1}(s(a_i))}(L^\ell_i(\bx))\prod_{i\in{E_\ell}\setminus J}1_{\fB'^{-1}(s(a_i))}(L^\ell_i(\bx))\right].\] 

Write $p_5$ for the expectation above. We write $\bm L_J:=(L^\ell_i)_{i\in J}$. By assumption, we know that 
\begin{equation}
\label{patch-1}
\E_{\bx}\left[\prod_{i\in J}f^{(i)}(L^\ell_i(\bx))1_{\fB'^{-1}(s(a_i))}(L^\ell_i(\bx))\right]\geq\frac{\beta(\deg I,\|I\|)}{|\Phi_{I'}(\bm L_J)|}.
\end{equation}
We want to use this inequality to show that $p_5$ is at least on the order of $\beta(\deg I,\|I\|)/|\Phi_{I'}(\ol{\bm L}^\ell)|$. For simplicity, write $\beta:=\beta(\deg I,\|I\|)$ in the rest of this argument.

By applying a change of coordinates, we can assume that $\bm L_J$ only depends on $x_1,\ldots,x_{\ell'}$ and is independent of $x_{\ell'+1},\ldots,x_{\ell}$. To lower bound $p_5$, we want to show that each tuple $(x_1,\ldots,x_{\ell'})$ that lies in certain atoms extends to a tuple $(x_1,\ldots,x_\ell)$ that still lies in certain atoms in approximately the same number of ways. We do this by a Cauchy-Schwarz argument. Define $\bm L''$ to be the following system of $2|E_{\ell}|-|J|$ linear forms in $2\ell-\ell'$ variables. For $i\in J$, set
\[L''_i(x_1,\ldots,x_{\ell},x'_{\ell'+1},\ldots,x'_{\ell}):=L^{\ell'}_i(x_1,\ldots,x_{\ell'}),\] and for $i\in E_{\ell}\setminus J$, define
\[L''_{i,1}(x_1,\ldots,x_{\ell},x'_{\ell'+1},\ldots,x'_{\ell}):=L^{\ell}_i(x_1,\ldots,x_{\ell}),\]
\[L''_{i,1}(x_1,\ldots,x_{\ell},x'_{\ell'+1},\ldots,x'_{\ell}):=L^{\ell}_i(x_1,\ldots,x_{\ell'},x_{\ell'+1},\ldots,x_{\ell}).\]
By \cref{CS-consistency}, we know that $|\Phi_{I'}(\bm L_J)|\cdot|\Phi_{I'}(\bm L'')|=|\Phi_{I'}(\ol{\bm L}^{\ell})|^2$.

Define $S\subseteq V^{\ell'}$ to be the set of tuples $\bx=(x_1,\ldots,x_{\ell'})$ such that $\fB'(L^\ell_i(\bx))=s(a_i)$ for each $i\in J$. For $\bx\in S$, let $c_{\bx}$ be the number of tuples $\bx'=(x_1,\ldots,x_{\ell})$ such that $\fB'(L^\ell_i(\bx))=s(a_i)$ for each $i\in E_{\ell}$. By applying equidistribution (\cref{equidistribution}) to $\bm L_J$ and $\ol{\bm L}^\ell$ and $\bm L''$, we find that
\begin{equation}
\label{patch-2}
|S|=\sum_{\bx\in S}1\leq(1+\beta/10)\frac{|V|^{\ell'}}{|\Phi_{I'}(\bm L_J)|},
\end{equation}

\begin{equation}
\label{patch-3}
\sum_{\bx\in S}c_{\bx}\geq(1-\beta/10)\frac{|V|^{\ell}}{|\Phi_{I'}(\ol{\bm L}^\ell)|},
\end{equation}

\begin{equation}
\label{patch-4}
\sum_{\bx\in S}c_{\bx}^2\leq(1+\beta/10)\frac{|V|^{2\ell-\ell'}}{|\Phi_{I'}(\bm L'')|}=(1+\beta/10)\frac{|V|^{2\ell-\ell'}|\Phi_{I'}(\bm L_J)|}{|\Phi_{I'}(\ol{\bm L}^\ell)|^2}.
\end{equation}

Define $T\subseteq S\subseteq V^{\ell'}$ to be the set of tuples $\bx=(x_1,\ldots,x_{\ell'})$ such that $\fB'(L^\ell_i(\bx))=s(a_i)$  and $f^{(i)}(L^\ell_i(\bx))=1$ for each $i\in J$. \cref{patch-1} implies that 
\begin{equation}
\label{patch-5}
|T|\geq\beta\frac{|V|^{\ell'}}{|\Phi_{I'}(\bm L_J)|}.
\end{equation}
We express $p_5$ as follows.
\[
p_5
=\frac1{|V|^\ell}\sum_{\bx\in T}c_{\bx}
=\frac1{|V|^\ell}\paren{\sum_{\bx\in S}c_{\bx}-\sum_{\bx\in S\setminus T}c_{\bx}}
\geq(1-\beta/10)\frac1{|\Phi_{I'}(\bm L_J)|}-\frac1{|V|^\ell}\sum_{\bx\in S\setminus t}c_{\bx}.
\]
Then combining \cref{patch-2}, \cref{patch-4}, \cref{patch-5} with the Cauchy-Schwarz inequality gives
\[
\paren{\sum_{\bx\in S\setminus T}c_{\bx}}^2
\leq |S\setminus T|\cdot\sum_{\bx\in S\setminus T}c_{\bx}^2
\leq (1-4\beta/5)\frac{|V|^{2\ell}}{|\Phi_{I'}(\ol{\bm L}^\ell)|^2}.
\]
Taking the square root and combining the above two inequalities gives \[p_5\geq\frac{\beta}{10}\frac{1}{|\Phi_{I'}(\ol{\bm L}^{\ell})|}.\]

Combining all the above inequalities we see that $p_1$, the $H$-density in $f$, is bounded by\[p_1\geq\paren{\paren{\frac{\epsilon}{4|\cS|}}^{|E_\ell|}\frac{\beta(\deg\fB,\|\fB\|)}{10}-2^{|E_{\ell}|+1}\theta(\deg\fB,\|\fB\|)}\frac{1}{{|\Phi_{I'}(\ol{\bm L}^\ell)|}}-3^{|E_\ell|}\eta(\deg\fB',\|\fB'\|)>\delta(\epsilon,\cH).\]
This provides the desired contradiction. Therefore we conclude that the recoloring $g\colon V\to\cS$ has no generic $H$-instances for every $H\in\cH$.
\end{proof}

\begin{proof}[Proof of \cref{removal-lemma}]
Define $\ol\cS:=\cS^{\Fpx}$ with $\Fpx$-action defined by \[b'\cdot(c_b)_{b\in\Fpx}:=(c_{b'b})_{b\in\Fpx}.\]

First we partition \[\cH=\bigsqcup_{c\in\cS\sqcup\{0\}}\cH_c\]as follows. If $L_i\equiv 0$ for some $i\in[m]$, place $H$ in the set $\cH_{\psi(i)}$. Otherwise, place $H$ in $\cH_0$. (Without loss of generality, we can assume that no pattern in $\cH$ has multiple linear forms that are identically equal to 0.)

Now we define sets $\ol\cH_c$ of $\ol\cS$-colored patterns for each $c\in\cS$. Let $H=(\bm L,\psi)\in\cH_0\cup\cH_c$ be an $\cS$-colored pattern over $\F_p$ consisting of $m$ linear forms in $\ell$ variables. We can write $\bm L=(L^\ell_i)_{i\in J}$ for some set $J\subseteq\F_p^\ell$ of size $m$. We convert $H$ to a set of patterns defined by the system $\ol{\bm L}^\ell=(L^\ell_i)_{i\in E_\ell}$ as follows. (See \cref{L} for the definitions of $L^{\ell}_i$ and $E_\ell$.) For each function $\ol\psi\colon E_\ell\to\ol\cS$ that satisfies $\ol\psi(i)_b=\psi(bi)$ whenever $i\in E_\ell\subseteq\F_p^\ell$ and $b\in\Fpx$ are such that $bi\in J\subseteq\F_p^\ell$, we add $(\ol{\bm L}^\ell,\ol\psi)$ to $\ol\cH_c$.

For each $c\in\cS$, we apply \cref{removal-lemma-proj} to $\ol\cH_c$ with parameter $\epsilon$. This produces a finite subset $\ol\cH_{c,\epsilon}\subseteq \ol\cH_c$ and $\delta_c=\delta(\epsilon,\ol\cH_c)>0$ with several desirable properties.

Let $\cH_\epsilon\subseteq\cH$ be the finite subset consisting of all patterns $H$ such that some pattern $\ol H$ corresponding to $H$ lies in $\ol\cH_{c,\epsilon}$ for some $c\in\cS$. Let $\delta=\min_{c\in\cS} \delta_c>0$. We claim that $\cH_\epsilon, \delta$ satisfy the desired conclusion.

Let $V$ be a finite-dimensional $\F_p$-vector space. Let $f\colon V\to\cS$ be a function. Suppose that the $H$-density in $f$ is at most $\delta$ for every $H\in\cH_{\epsilon}$. Define $\ol f\colon V\to\ol\cS$ by \[\ol f(x):=(f(bx))_{b\in\Fpx}.\] Note that $\ol f$ is a projective function. Furthermore, we claim that the $\ol H$-density in $\ol f$ is at most $\delta$ for all $\ol H\in\ol\cH_{f(0)}$. This is true simply because if $\bx=(x_1,\ldots,x_{\ell})\in V^\ell$ is an $\ol H$-instance in $\ol f$, then $\bx$ is also an $H$-instance in $f$ where $\ol H\in\ol\cH_{f(0)}$ is any patterns corresponding to $H\in\cH_0\sqcup\cH_{f(0)}$.

Thus by assumption, there exists a projective recoloring $\ol g\colon V\to\ol\cS$ such that $\ol g$ agrees with $\ol f$ on all but an at most $\epsilon$-fraction of $V$ and $\ol g$ has no generic $\ol H$-instances for every $\ol H\in\ol\cH_{f(0)}$.

Define $g\colon V\to\cS$ by $g(x):=\ol g(x)_1$ for $x\neq 0$ and $g(0)=f(0)$. Note that $g$ agrees with $f$ on all but an at most $\epsilon$-fraction of $V$. Furthermore, note that $g$ has no $H$-instances for $H\in\cH_c$ with $c\neq f(0)$ since $f(0)=g(0)$. Finally, $g$ has no generic $H$-instances for $H\in\cH_0\sqcup\cH_{f(0)}$ since any such generic $H$-instance in $g$ is a generic $\ol H$-instance in $\ol g$ for some $\ol H$ corresponding $H$ in $\ol\cH_{f(0)}$, which we assumed was not the case.
\end{proof}

\section{Proof of property testing results}
\label{sec-proof-of-main-thms}

\begin{proof}[Proof of \cref{main-thm}]
It follows from \cite[Theorem 10]{BGS15} that a linear-invariant property is testable only if it is semi subspace-hereditary.

Now suppose that $\cP$ is a linear-invariant semi subspace-hereditary property. By definition, there exists a subspace-hereditary property $\cQ$ such that
\begin{enumerate}[(i)]
\item every function satisfying $\cP$ also satisfies $\cQ$;
\item for all $\epsilon>0$, there exists $N(\epsilon)$ such that if $f\colon V\to\cS$ satisfies $\cQ$ and is $\epsilon$-far from satisfying $\cP$, then $\dim V<N(\epsilon)$.
\end{enumerate}

We define $\cH$ a (possibly infinite) set of $\cS$-colored patterns. For each $f\colon \F_p^\ell\to\cS$ that does not satisfy $\cQ$, include $H=({\bm L}^\ell, f)$ in $\cH$ ($\bm L^\ell$ is the system of linear forms that defines an $\ell$-dimensional subspace, defined in \cref{L}). Since $\cQ$ is subspace-hereditary, it immediately follows that $\cQ$ consists exactly of the functions with no generic $H$-instances for any $H\in\cH$.

By \cref{removal-lemma}, there exist a finite subset $\cH_{\epsilon}\subseteq\cH$ and some $\delta(\epsilon,\cH)>0$ such that the following holds. If $f\colon V\to\cS$ has $H$-density at most $\delta(\epsilon,\cH)$ for every $H\in\cH_{\epsilon}$, then $f$ is $\epsilon$-close to $\cQ$. Define $\ell(\epsilon)$ to be the largest $\ell$ such that some pattern defined by the system ${\bm L}^\ell$ is present in $\cH_{\epsilon}$.

Now we define the oblivious tester for $\cP$. Given $\epsilon>0$, the tester produces \[d(\epsilon):=\max\left\{N(\epsilon/2),\lceil\log_p(2/\delta(\epsilon/2,\cH))\rceil+\ell(\epsilon/2)\right\}.\] Given a function $f\colon V\to\cS$ our tester receives oracle access to $f|_U$ where 
\begin{enumerate}[(i)]
\item if $\dim V\geq d(\epsilon)$, then $U$ is a uniform random affine subspace of dimension $d(\epsilon)$;
\item else, $U=V$.
\end{enumerate}
Our tester works as follows. If $\dim U<d(\epsilon)$ the tester accepts if $f|_U\in\cP$. If $\dim U\geq d(\epsilon)$ the tester accepts in $f|_U\in\cQ$.

Suppose $f\in\cP$. If $\dim U< d(\epsilon)$, then $U=V$, so $f|_U=f\in\cP$. Thus the tester always accepts in this case. In the other case, note that since $f\in\cP$, it follows that $f\in\cQ$, and since $\cQ$ is subspace-hereditary, $f|_U\in\cQ$. Thus the tester also always accepts in this case.

Now suppose that $f$ is $\epsilon$-far from $\cP$. By the definition of $\cQ$ we know that either $\dim V< N(\epsilon/2)\leq d(\epsilon)$ or $f$ is $\epsilon/2$-far from $\cQ$. Consider the action of the tester. If $\dim U<d(\epsilon)$, then $U=V$ so $f|_U=f\not\in\cP$. Thus the tester always rejects in this case. In the other case, note that since $f$ is $\epsilon/2$-far from $\cQ$, by assumption there is some $H\in\cH_{\epsilon/2}$ such that $f$ has $H$-density more than $\delta(\epsilon/2,\cH)$. Let $H=({\bm L}^\ell,\psi)$ for some $\ell\leq \ell(\epsilon/2)$. We claim the fact that the $H$-density in $f$ is large implies that there is at least a $\delta(\epsilon/2,\cH)/2$-fraction of $\ell$-dimensional subspaces that $f$ colors by $\psi$. (Note that this does not immediately follow since the $H$-density includes the contribution of $H$-instances that are not generic.) We can compute that the probability a uniform random ${\bm L}^\ell$-instance in $V$ is not generic is at most $p^{\ell-\dim V}$. It follows that the fraction of $\ell$-dimensional subspaces that $f$ colors by $\psi$ is at least \[\delta(\epsilon/2,\cH)-p^{\ell-\dim V}\geq\delta(\epsilon/2,\cH)-p^{-\lceil\log_p(2/\delta(\epsilon/2,\cH))\rceil}\geq\delta(\epsilon/2,\cH)/2.\] We conclude that in this case the tester rejects with probability at least $\delta(\epsilon/2,\cH)/2$, as desired.
\end{proof}

\begin{proof}[Proof of \cref{main-thm-PO}]
Suppose $\cP$ is a linear-invariant property that is PO-testable. By definition, there exists some $d$, independent of $\epsilon$ and $\dim V$, such that to test $f\colon V\to\cS$, the tester receives $f|_U$ where $U$ is
\begin{enumerate}[(i)]
\item if $\dim V\geq d$, then $U$ is a uniform random linear subspace of dimension $d$;
\item else, $U=V$.
\end{enumerate}

We define $\cH$ to be the set of patterns of the form $(\bm L^d,\psi)$ where $\psi\colon\F_p^d\to\cS$ is a restriction that the tester rejects on  ($\bm L^d$ is the pattern that defines a $d$-dimensional subspace, defined in \cref{L}). We claim that for every $f\colon V\to\cS$ with $\dim V\geq d$, it holds that $f\in\cP$ if and only if $f$ has no generic $\cH$-instances. This claim suffices to prove that $\cH$ is subspace-hereditary and locally characterized. 

Suppose $f\colon V\to\cS$ satisfies $\cP$ and $\dim V\geq d$. By the definition of PO-testable, the tester must accept $f$ with probability 1. Thus the tester must accept $f|_U$ for every $U\leq V$ of dimension $d$. This implies that $f$ has no generic $\cH$-instances. Now suppose that $f\colon V\to\cS$ does not satisfy $\cP$ and $\dim V\geq d$ holds. By definition, the tester must accept $f$ with positive probability. Thus there must be some $U\leq V$ of dimension $d$ such that $f|_U$ rejects. This is equivalent to the fact that $f$ contains a generic $H$-instance for some $H\in\cH$, proving the desired result.

Now we show that every linear-invariant subspace-hereditary locally characterized property is testable. Suppose $\cP$ is such a property. It follows that there is some $d$ and (finite) $\cH$ consisting of patterns of the form $(\bm L^d,\psi)$ such that for $f\colon V\to\cS$ with $\dim V\geq d$, we have $f$ satisfies $\cP$ if and only if $f$ has no generic $\cH$-instances.

The PO-tester for $\cP$ proceeds in the obvious way. The tester is given oracle access to $f|_U$ where $U$ is
\begin{enumerate}[(i)]
\item if $\dim V\geq d$, then $U$ is a uniform random linear subspace of dimension $d$;
\item else, $U=V$.
\end{enumerate}
The tester accepts if and only if $f|_U\in\cP$.

Suppose $f\colon V\to\cS$ satisfies $\cP$. If $\dim V<d$, then $f|_U=f\in\cP$, so the tester accepts $f$. If $\dim V\geq d$, then by the fact that $\cP$ is subspace-hereditary and locally characterized, it follows that $f|_U\in\cP$ for all $d$-dimensional $U\leq V$. Thus the tester accepts $f$ in this case as well.

Now suppose that $f\colon V\to\cS$ is $\epsilon$-far from $\cP$. If $\dim V<d$, then $f|_U=f\not\in\cP$, so the tester rejects $f$. If $\dim V\geq d$, by \cref{removal-lemma}, there must be some $H=(\bm L^d,\psi)\in\cH$ such that the $H$-density in $f$ is more than $\delta(\epsilon,\cH)$. We claim that this implies that there is a large fraction of $d$-dimensional subspaces that $f$ colors by some $H\in\cH$. (Note that this does not immediately follow since the $H$-density includes the contribution of $H$-instances that are not generic.) We can compute that at most a $p^{d-\dim V}$-fraction of $H$-instances are non-generic. Thus at least the fraction of $d$-dimensional subspaces that $f$ colors by $\psi$ is at least \[\delta(\epsilon,\cH)-p^{d-\dim V}.\] This parameter is negative for small values of $\dim V$, so we can also use the fact that since $f$ does not satisfy $\cP$, there is at least one $d$-dimensional subspaces that is colored by $\cH$. Thus the rejection probability of this tester is at least\[\max\left\{\delta(\epsilon)-p^{d-\dim V}, p^{-d\cdot\dim V}\right\}.\] Note that for $\dim V\geq d$, this parameter is uniformly bounded away from 0, as desired.
\end{proof}

\end{document}